\def\AMS{{\it AMS subject classifications: }}
\definecolor{purple}{rgb}{.9,0,.9}
\newcommand{\Real}{\mathbb{R}}
\newcommand{\rfa}{\qquad {\rm for \ all}\ }
\newcommand{\cA}{{\mathcal A}}
\newcommand{\cE}{{\mathcal E}}
\newcommand{\cI}{{\mathcal I}}
\newcommand{\cN}{{\mathcal N}}
\newcommand{\cR}{{\mathcal R}}
\newcommand{\cT}{{\mathcal T}}\newcommand{\cU}{{\mathcal U}}
\newcommand{\cV}{{\mathcal V}}\newcommand{\cW}{{\mathcal W}}
\newcommand{\bb}{{\bf b}}
\newcommand{\bd}{{\bf d}}\newcommand{\be}{{\bf e}}\newcommand{\bff}{{\bf f}}
\newcommand{\bj}{{\bf j}}
\newcommand{\br}{{\bf r}}
\newcommand{\bu}{{\bf u}}
\newcommand{\bw}{{\bf w}}
\newcommand{\bA}{{\bf A}}
\newcommand{\bC}{{\bf C}}\newcommand{\bD}{{\bf D}}
\newcommand{\bM}{{\bf M}}
\newcommand{\bQ}{{\bf Q}}
\newcommand{\bT}{{\bf T}}\newcommand{\bU}{{\bf U}}
\newcommand{\bW}{{\bf W}}
\newcommand{\bZ}{{\bf Z}}
\newcommand{\p}{\textbf{p}}
\newcommand{\n}{\textbf{n}}
\newcommand{\F}{\textbf{F}}
\newcommand{\R}{\textbf{R}}
\newcommand{\G}{\textbf{G}}
\newcommand{\Q}{\textbf{Q}}
\newcommand{\J}{\textbf{J}}
\newtheorem{theorem}{Theorem}[section]
\newtheorem{lemma}[theorem]{Lemma}
\newtheorem{definition}[theorem]{Definition}
\newtheorem{assumption}{Assumption}
\newcommand{\beqn}{\begin{equation}}
\newcommand{\eeqn}{\end{equation}}
\newcommand{\half}{\frac{1}{2}}
\newcommand{\bone}{\textbf{1}}
\newcommand{\norm}[1]{\left\|#1\right \|}
\newcommand{\bbE}{\mathbb{E}}
\newcommand{\bnu}{\boldsymbol{\nu}}
\newcommand{\ve}{\varepsilon}
\def\Xint#1{\mathchoice
{\XXint\displaystyle\textstyle{#1}}%
{\XXint\textstyle\scriptstyle{#1}}%
{\XXint\scriptstyle\scriptscriptstyle{#1}}%
{\XXint\scriptscriptstyle\scriptscriptstyle{#1}}%
\!\int}
\def\XXint#1#2#3{{\setbox0=\hbox{$#1{#2#3}{\int}$ }
\vcenter{\hbox{$#2#3$ }}\kern-.58\wd0}}
\def\dashint{\Xint-}
\title[Homogenization of a system of elastic and reaction-diffusion equations]{Homogenization of a system of elastic and reaction-diffusion equations modelling plant cell wall biomechanics}
\author{Mariya Ptashnyk and Brian Seguin}
 \thanks{\noindent Department of Mathematics, University of Dundee
Dundee, DD1 4HN, UK, 
 m.ptashnyk@dundee.ac.uk, mptashnyk@maths.dundee.ac.uk, bseguin@maths.dundee.ac.uk  \\
M.\ Ptashnyk and B.\ Seguin gratefully acknowledge the support of the EPSRC First Grant EP/K036521/1 ``Multiscale modelling and analysis of mechanical properties of plant cells and tissues''.}
\begin{document}

\maketitle

%
%
%
\begin{abstract}
In this paper we present  a derivation and  multiscale analysis of  a mathematical model   for plant cell wall biomechanics that takes into account both the microscopic structure of a cell wall  coming from the cellulose microfibrils and the chemical reactions between the cell wall's constituents.  Particular attention is paid to the role of pectin and the impact of  calcium-pectin cross-linking chemistry on the mechanical properties of the cell wall.  We prove the existence and uniqueness of the strongly coupled microscopic problem consisting of the equations of linear elasticity  and a system of reaction-diffusion and ordinary differential equations.  Using homogenization  techniques  (two-scale convergence and periodic unfolding methods) we derive  a macroscopic model for plant cell wall biomechanics.  
\end{abstract}
%
%
{\small \noindent {\it Key words: } {Homogenization; two-scale convergence; periodic unfolding method;  elasticity; reaction-diffusion equations;  plant modelling.}

\noindent \AMS{35B27, 35Q92, 35Kxx, 74Qxx, 74A40,  74D05}
}
%
%
\section*{Introduction}

For  a better understanding of plant growth and development  it is important to  analyse the influence of chemical processes  on the mechanical properties (elasticity and extensibility)  of plant cells. The main feature of plant cells are their walls, which must be strong to resist a high internal hydrostatic pressure (turgor pressure) and flexible to permit growth. Plant cell walls consist  of a wall  matrix  (composed mainly of  pectin, hemicellulose, structural proteins, and water)  and cellulose microfibrils. It is supposed that calcium-pectin cross-linking chemistry is one of the main regulators of plant cell wall elasticity and extension \cite{WHH}.
Pectin is deposited into cell walls in a methylesterified form. In cell walls  pectin can be modified by the enzyme pectin methylesterase (PME), which removes methyl groups by breaking ester bonds. The de-esterified pectin is able to form calcium-pectin cross-links, and so stiffen the cell wall and reduce its expansion. On the other hand, mechanical stresses can break calcium-pectin cross-links and hence increase the extensibility of plant cell walls.
  
  To analyse the interactions between calcium-pectin dynamics and the deformations of a plant cell wall, as well as the influence of the microscopic structure on the mechanical properties of a cell wall,  we derive a mathematical model for plant cell wall biomechanics at the length scale of cell wall microfibrils. We model the cell wall as a three-dimensional continuum consisting of a wall matrix and microfibrils.  Within the wall matrix, we consider the dynamics of five chemical substances:  the enzyme PME, methylesterfied pectin, demethylesterfied pectin, calcium ions, and calcium-pectin cross-links.  The cell wall matrix is assumed to be isotropic and   linearly elastic, whereas microfibrils are modelled as an anisotropic,  linearly elastic material.  The interplay between the mechanics and the cross-link dynamics comes in by assuming that the elastic  properties of the matrix depend on the density of the cross-links and that strain or stress within the cell wall can break calcium-pectin cross-links.   The strain- or stress-dependent opening of calcium channels in the cell plasma membrane is addressed in the flux boundary conditions for calcium ions.   We consider  two different cases, one in which the calcium-pectin cross-links diffuse and another in which they do not diffuse.
Thus the microscopic problem is a strongly coupled system of  reaction-diffusion equations or reaction-diffusion  and  ordinary differential equations, with reaction terms depending on the displacement gradient, and   the equations of linear elasticity, with elastic moduli depending on the density of calcium-pectin cross-links.

To analyse the macroscopic behaviour of  plant cell walls, comprising  a complex microscopic structure, we  rigorously derive a macroscopic  model for plant cell wall biomechanics.    As there are thousands of microfibrils in a plant cell wall,   the derivation of the macroscopic equations is also important for   effective  numerical simulations.  The two-scale convergence, e.g.~\cite{allaire,Nguetseng}, and the periodic unfolding  method, e.g.~\cite{CDG,CDDGZ}, are applied to obtain the macroscopic equations.          
Some previous results on the homogenization of problems in linear elasticity can be found  in \cite{Allaire,BLP,JKO,OShY,SP} (and the references therein).  A multiscale analysis of  microscopic problems comprising the equations of linear elasticity  for a solid matrix or cells combined with the Stokes equations for the fluid part  was considered in \cite{GM,JMN,MW}.

The main novelty of this paper is twofold: (i) we derive a new  model for plant cell wall biomechanics  where 
the mechanical properties and biochemical processes in a cell wall are considered on the scale of 
its structural elements (on the scale of  the microfibrils) and (ii)  using homogenization techniques we   obtain  a macroscopic model for plant cell wall biomechanics from a microscopic description of the mechanical and chemical processes. 
This approach allows us to take into account the complex microscopic structure of a plant cell wall and  to analyze the impact of the heterogeneous distribution of cell wall's structural elements on the mechanical properties and development of plants. 

The main mathematical difficulty arises from the strong coupling between the equations of linear  elasticity   for cell wall mechanics  and the system of reaction-diffusion and ordinary differential  equations for the chemical processes in the wall matrix.  The Galerkin method together with classical fixed-point  approaches  are used to prove  the existence of a unique solution of the microscopic problem. However, since the reaction terms depend on the displacement gradient  and  the elasticity tensor is a function of the density of chemical substances,  the derivation of a contraction inequality is non-standard and relies on estimates for the $L^\infty$-norm of the solutions of the reaction-diffusion and ordinary differential equations in term of the $L^2$-norm of the displacement gradient.
The theory of positively invariant regions \cite{Redlinger,Smoller} and the Alikakos \cite{Alikakos} iteration techniques are applied to show the non-negativity and  uniform boundedness of solutions of the microscopic model.   The  iteration technique \cite{Alikakos} is also used  to derive a contraction inequality. 

 The analysis of the coupled system also depends strongly  on the microscopic model for the chemical processes. For the chemical processes  in the cell wall matrix we consider two situations: (i)  chemical processes are described  by a  system of reaction-diffusion and ordinary differential  equations and   (ii)   all chemical processes are modelled by reaction-diffusion equations.
 
  In the first situation, the  solutions of  the ordinary differential equation have the same regularity with respect to the  spatial variables as  the reaction terms. Thus, for the proof of the well-posedness results for the microscopic problem and for  the  rigorous derivation of the macroscopic equations,  the dependence of the reaction terms  on a local average of   the displacement   gradient is essential. The well-posedness  of the microscopic problem  can be proven by considering  an $\ve$-average, where  the small parameter $\ve$ characterizes the microscopic structure of the cell wall. However,  the proof of the strong convergence for a sequence of solutions of  the ordinary differential equation, which is necessary for the homogenization of the microscopic problem,  relies on the fact that the  local average is  independent of $\ve$.  
 Also,   in  the proof of the strong convergence we apply  the unfolding operator  to map  solutions of  the ordinary differential equation,   defined in a perforated $\ve$-dependent  domain,  to a fixed domain. 

In the second case when all  chemical substances  diffuse,    solutions of the reaction-diffusion equations  have higher regularity with respect to the spatial variables and a point-wise dependence of the reaction terms on the displacement  gradient can be considered.   In this situation in order to pass to the limit in the nonlinear reaction terms we  prove the strong two-scale convergence for the displacement  gradient. 

  Similar to the microscopic problems,  the uniqueness of a solution of the macroscopic equations  is proven by  deriving a contraction inequality involving the $L^\infty$-norm of the difference of two solutions of the reaction-diffusion and ordinary differential equations.

The paper is organised as follows.  In Section~\ref{section:model} we give the general setting of the two microscopic models for plant cell wall biomechanics. 
The main results of the paper are summarized in Section~\ref{main_results}.    The existence and uniqueness  results for     weak solutions of  the two  microscopic problems are proven in Sections~\ref{exist_uniq_ModelI}~and~\ref{exist_uniq_ModelII}.  
  The homogenization and derivation of the macroscopic equations for  both microscopic models   for plant cell wall biomechanics  are conducted in Sections~\ref{secthom} and \ref{macro_model_II}.  Some  results on the numerical simulations of the unit cell problems, which determine  the effective macroscopic  elastic properties of  a plant cell wall, are given in Section~\ref{numerics}. 
The  detailed derivation of the microscopic model  for plant cell wall biomechanics on the length-scale of the cell wall microfibrils is presented in Section~\ref{sectdermodel}.    Concluding remarks are included in  Section~\ref{conclusions}.

\section{Formulation of the mathematical  models for plant cell wall biomechanics.}\label{section:model}

 In the mathematical model for plant cell wall biomechanics we consider   interactions between the mechanical properties of the plant cell wall   and  the chemical processes in  the cell wall.   The  derivation of the models is presented in Section~\ref{sectdermodel}.    
  
  In the mathematical model we consider  the  microscopic structure of a plant cell wall, which  is given by   microfibrils embedded in the cell wall matrix. 
By $\Omega\subset \mathbb R^3$ we denote a domain occupied by   a flat section of a plant cell wall and can consider   $\Omega=(0,a_1)\times(0,a_2)\times(0,a_3)$, where $a_i$, $i=1,2,3$, are positive numbers. We  assume that  the microfibrils are oriented in the $x_3$-direction, see Fig.~\ref{FigDomain1}(a).
The part of $\partial\Omega$ on the exterior of the cell wall   is given by
$
\Gamma_{\cE}= \{a_1\}\times(0, a_2)\times(0,a_3),
$
and the interior boundary $\Gamma_\cI$ of the cell wall is given by
$
\Gamma_{\cI}=\{0\}\times(0, a_2)\times (0,a_3).
$
The top and bottom boundaries are defined by $\Gamma_{\mathcal U} = (0, a_1)\times \{ 0\} \times (0, a_3) \cup (0, a_1)\times \{ a_2\} \times (0, a_3)$. 

To determine the  microscopic structure of the cell wall, we consider    $\hat Y = (0,1)^2$ and an open subset $\hat Y_F$, with    $\overline {\hat Y_F} \subset \hat Y$,  and define  $\hat Y_M= \hat Y \setminus \overline {\hat Y_F}$, $Y= \hat Y\times (0,a_3)$,  $Y_F = \hat  Y_F \times (0, a_3)$, and $Y_M = Y \setminus \overline{Y_F}$,  where $Y_M$ and $Y_F$ represent the  cell wall matrix and a microfibril, respectivelly,  see Fig.~\ref{FigDomain1}(b).  We also denote  $\Gamma= \partial Y_F$ and $\hat \Gamma = \partial \hat Y_F$.

We assume that the  microfibrils in the cell wall  are distributed periodically  and have a diameter on the order of $\ve$, where  the small parameter $\ve$  characterizes the size of the microstructure (the ratio of the diameter  of  microfibrils   to the thickness of the cell wall, i.e.\  the  microfibrils of a plant cell wall are about $3$ nm in diameter and are separated by a distance of about $6$~nm, see e.g.~\cite{Colvin,KSJW,Thomas}, whereas  the thickness of a plant cell wall is of the order of a few micrometers). 
The domains 
\begin{equation*}
\Omega_F^\ve =\bigcup_{\xi \in  \mathbb Z^2} \big\{ \ve (\hat Y_F+ \xi) \times(0, a_3)\;\; | \;\;   \ve (\hat Y+ \xi)\subset (0,a_1)\times(0,a_2) \big\} \qquad  \text{ and } \qquad   \Omega_M^\ve =\Omega \, \setminus \overline {\Omega_F^\ve}
\end{equation*}
denote the part of $\Omega$ occupied by the microfibrils and  by the cell wall matrix, respectively.  The boundary between the matrix and the microfibrils is denoted by
\begin{equation*}
\Gamma^\ve =\partial\Omega_M^\ve\cap\partial\Omega_F^\ve.
\end{equation*} 

\begin{figure}[t]
\includegraphics[width=4.5in]{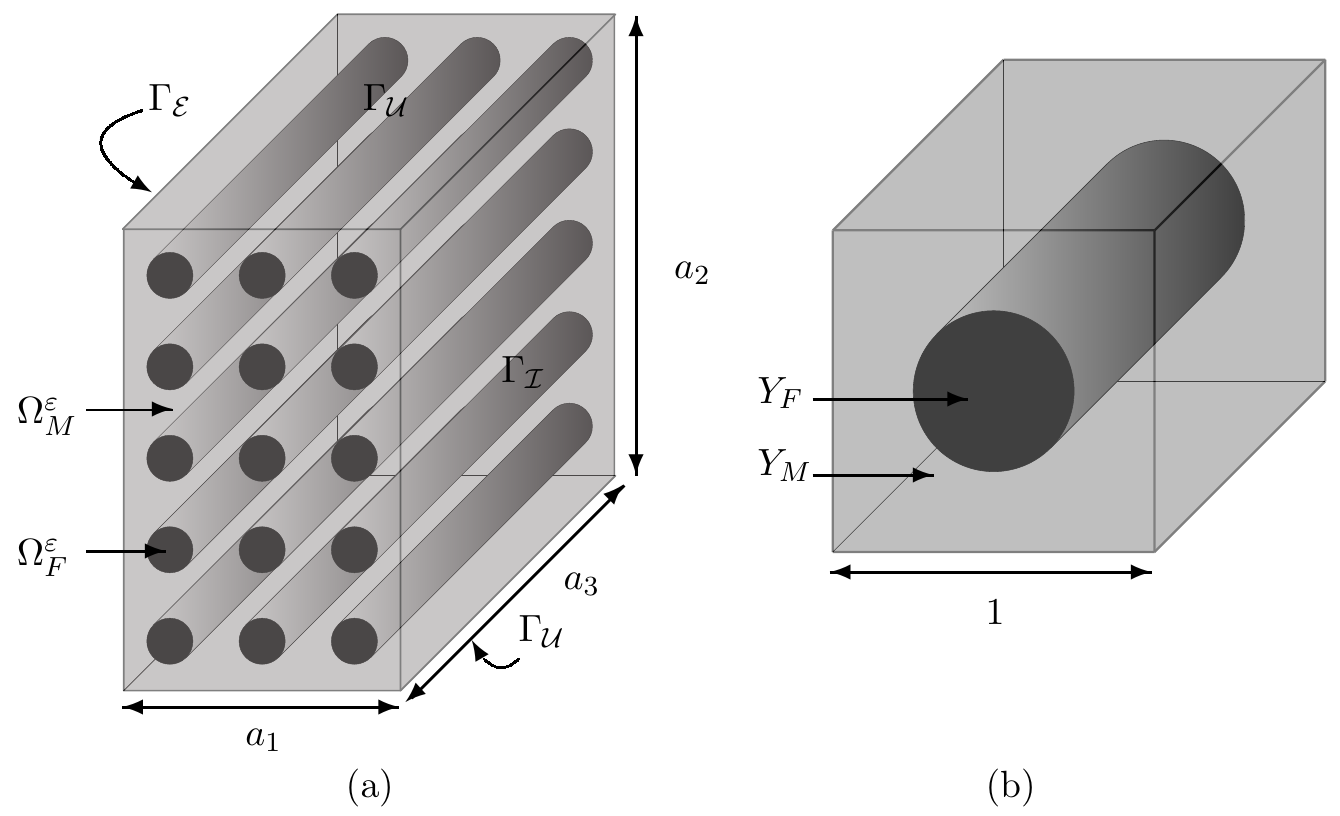}
\caption{(a) A depiction of the domain $\Omega$ with the subsets representing the cell wall matrix $\Omega_M^\ve$ and the microfibrils $\Omega_F^\ve$.  The surface $\Gamma_\cI$ is in contact with the interior of the cell, and the (hidden) surface $\Gamma_\cE$ is facing the outside of the cell, and $\Gamma_\cU$ is the union of the surfaces on the top and bottom of $\Omega$.  (b) A depiction of the unit cell $Y$.}\label{FigDomain1}
\end{figure}

In the mathematical model of plant cell wall biomechanics  we consider   deformations of the cell wall and the  interactions between   five species within the  plant  cell  wall  matrix:  
the number densities   of  methylestrified  pectin   $\p_1^\ve$,   of  enzyme PME  $\p_2^\ve$,  of   demethylestrified pectin $\n_1^\ve$,  of calcium ions $\n_2^\ve$, and  of  calcium-pectin cross-links $b^\ve$.  
   We shall consider two situations: (i) it is assumed that the  calcium-pectin cross-links   $b^\ve$ do not diffuse and   (ii) the diffusion  of the calcium-pectin cross-links  $b^\ve$ in the cell wall matrix is considered. \\
 
 {\bf Model I: } In the first case,  where the calcium-pectin cross-links  $b^\ve$ do not diffuse, the microscopic problem  is composed of  a system of reaction-diffusion and ordinary differential equations for  $\p^\ve=(\p^\ve_1, \p^\ve_2)^T$, $\n^\ve=(\n^\ve_1, \n^\ve_2)^T$, and $b^\ve$, coupled with the equations of linear elasticity for the displacement  $\bu^\ve$ 
\begin{eqnarray}\label{sumbal}
&&\partial_t {\bf p}^\ve =\text{div}(D_p \nabla \p^\ve) -  \F_p(\p^\ve)  
\hspace{ 5.9 cm } \text{in}\ (0,T)\times\Omega^\ve_M, 
\\
&&\begin{aligned}\label{sumbal_11}
&\partial_t \n^\ve =\text{div}(D_n\nabla \n^\ve) +\F_n(\p^\ve, \n^\ve)
+  \R_n (\n^\ve, b^\ve,  \mathcal N_\delta(\be(\bu^\ve)))  \quad   \\
& \partial_t b^\ve = \phantom{ \text{div}(D_c\nabla n^\ve)  ++\F_n(\p^\ve, \n^\ve) \quad }  R_{b}(\n^\ve, b^\ve, \mathcal N_\delta(\be(\bu^\ve)) ) 
\end{aligned}\qquad   \text{in}\ (0,T)\times\Omega^\ve_M,
\end{eqnarray}
 with     the boundary and initial  conditions 
\beqn\label{BC1}
\begin{aligned}
& \begin{cases}
D_{p}\nabla \p^\ve\, \bnu =  \J_p(\p^\ve)\\ 
D_{n}\nabla \n^\ve\, \bnu = \G(\n^\ve) \, \mathcal N_\delta(\be(\bu^\ve)) 
\end{cases}
\qquad & \text{on}\ (0,T)\times\Gamma_\cI,
\\
& \begin{cases}
D_p\nabla \p^\ve \, \bnu =- \gamma_p\,  \p^\ve  \qquad \\
 D_n\nabla \n^\ve \, \bnu = \J_n(\n^\ve)
 \end{cases}
& \text{on}\ (0,T)\times\Gamma_\cE, \\
& 
\; D_p \nabla \p^\ve \,  \bnu = 0,  \; \; \quad  D_n \nabla \n^\ve \,  \bnu = 0 \qquad &\text{on}\ (0,T)\times(\Gamma_{\mathcal U}\cup \Gamma^\ve), \\ 
 &\; \p^\ve, \;  \n^\ve  & \quad \text{$a_3$-periodic in } x_3, 
 \\
&\p^\ve(0,x) = \p_{0}(x), \quad \n^\ve (0,x) = \n_{0}(x), \quad b^\ve (0,x) = b_{0}(x) \qquad &\text{ for } \quad x\in \Omega_M^\ve, 
\end{aligned}
\eeqn
where $\text{div}(D_p \nabla \p^\ve)=(\text{div}(D_p^1 \nabla \p^\ve_1),  \text{div}(D_p^2 \nabla \p^\ve_2))^T$ and  $\text{div}(D_n \nabla \n^\ve)=(\text{div}(D_n^1 \nabla \n^\ve_1),  \text{div}(D_n^2 \nabla \n^\ve_2))^T$. 

The displacement $\bu^\ve$ satisfies the equations of linear elasticity
\begin{equation}\label{sumbal2}
\left\{
\begin{aligned}
\text{div}(\mathbb E^\ve(b^\ve,x)\be(\bu^\ve))&=\textbf{0} \qquad \qquad &&\qquad \text{in}\ (0,T)\times\Omega, \\
(\mathbb E^\ve(b^\ve,x)\be(\bu^\ve))\, \bnu &=-p_\cI\, \bnu  &&\qquad \text{on}\  (0,T)\times\Gamma_\cI, \\
(\mathbb E^\ve(b^\ve,x)\be(\bu^\ve))\, \bnu &=\bff &&\qquad \text{on} \ (0,T)\times \big(\Gamma_\cE\cup \Gamma_{\mathcal U}\big),\\
\bu^\ve &&&\qquad \text{$a_3$-periodic in } x_3.
\end{aligned}
\right.
\end{equation}
The elasticity tensor is defined as  $\mathbb E^\ve(\xi, x)=  \bbE(\xi, \hat x/\ve)$,  where the  $\hat Y$-periodic in $y$  function $\mathbb E$ is given by
$
\bbE(\xi ,y)= \bbE_M(\xi) \chi_{\hat Y_M}(y) + 
\bbE_F \chi_{\hat Y_F}(y),
$
 with constant elastic properties of the microfibrils and the elastic properties of cell wall matrix depending on the density of calcium-pectin cross-links.

In \eqref{sumbal_11} and \eqref{BC1},   
${\cN}_\delta(\be(\bu^\ve))$ denotes the  positive part of a  local average of the trace of the elastic   stress 
\begin{equation*}
\mathcal N_\delta (\be(\bu^\ve))(t, x) =\Big(\dashint_{B_\delta(x)\cap \Omega}\text{tr } \bbE^\ve(b^\ve,\tilde x) \be(\bu^\ve)(t, \tilde x) \, d\tilde x \Big)^+\qquad \text{for all } \; x\in \overline\Omega \text{ and } t \in (0,T),
\end{equation*}
where $\delta>0$ is arbitrary fixed and  $w^{+} = \max\{ w, 0\}$. 
 
From a biological point of view the non-local dependence of the chemical reactions on  the displacement  gradient is motivated by the fact that pectins are very long molecules and hence cell wall mechanics has a nonlocal impact on the chemical processes.
The positive part in the definition of ${\cN}_\delta(\be(\bu^\ve))$   reflects the fact that  extension rather than compression causes the breakage of cross-links. 

In the boundary conditions \eqref{BC1} we assumed that the flow of calcium ions between the interior of the cell  and the cell wall depends on the displacement gradient, which corresponds to the stress-dependent opening of calcium channels in the plasma  membrane \cite{White}. 

The assumed dependence of  the reaction terms  on the local average  of the displacement  gradient  is also important  for the analysis of  Model I. The dependence on the local average of the displacement  gradient in the  ordinary differential equation for $b^\ve$ allow us to derive an estimate for  the $L^\infty$-norm of the  difference of two solutions $b^{\ve,1}$ and $b^{\ve,2}$ in terms of  $\|\be(\bu^{\ve,1}- \bu^{\ve,2})\|_{L^q(0,T; L^2(\Omega))}$ for some $q\geq 2$, which is important  for the proof of the well-posedness of the coupled system \eqref{sumbal}--\eqref{sumbal2}.  The fact that the local average $\mathcal N_\delta$ is independent of $\ve$  is  used in the proof   of the strong convergence of  $b^\ve$,  and hence  is important for the homogenization of the  microscopic problem \eqref{sumbal}--\eqref{sumbal2}. 
However, if we  assume  diffusion of  $b^\ve$, then  a point-wise dependence  on $\be(\bu^\ve)$ can be considered. From a biological point of view  the  situation where cross-links diffuse corresponds to a  less connected  network of calcium-pectin cross-links  and, hence, the mechanical stress in the cell wall will have a point-wise impact on the chemical processes. This motivates the consideration of the following model.  \\ 

{\bf Model II }   In the second case we consider  \eqref{sumbal} and \eqref{sumbal2}  together with  the  modified equations for $\n^\ve$ and $b^\ve$, which  include the  diffusion of $b^\ve$ and 
 reaction terms  depending on $ \bbE^\ve(b^\ve,x) \be(\bu^\ve)$ instead of its  local average
\begin{eqnarray}\label{sumbal_1}
\begin{cases}
\partial_t \n^\ve =\text{div}(D_n\nabla \n^\ve) +  \F_n(\p^\ve, \n^\ve)
 +  \Q_n(\n^\ve, b^\ve, \be(\bu^\ve)) \quad   \\
 \partial_t b^\ve = \text{div}(D_b\nabla b^\ve) \; \;  +    Q_b(\n^\ve, b^\ve, \be(\bu^\ve))   
\end{cases}\qquad   \text{in}\ (0,T)\times\Omega^\ve_M.
\end{eqnarray}
In addition to the boundary conditions in \eqref{BC1},  we define the boundary conditions for $b^\ve$:
 \beqn\label{BC4}
 \begin{aligned}
D_b\nabla b^\ve\cdot\bnu =0 & \qquad \text{on}\ (0,T)\times(\Gamma^\ve\cup \Gamma_{\cI}\cup \Gamma_{\mathcal U}),  && \quad
D_b\nabla b^\ve \cdot\bnu = - \gamma_b  \,  b^\ve && \quad \text{on}\ (0,T)\times  \Gamma_{\cE}, \\
b^\ve & \qquad \text{$a_3$-periodic in }  x_3.
\end{aligned}
\eeqn
As an example  we can consider $\Q_n(\n^\ve, b^\ve, \be(\bu^\ve))= \Q(\n^\ve, b^\ve) P(b^\ve,  \be(\bu^\ve))$, where $\Q:  \mathbb R^2\times \mathbb R\to \mathbb R^2$ is continuously differentiable and  $P:  \mathbb R\times \mathbb R^{3\times 3} \to \mathbb R$ is a positive continuous function given e.g.\ by
\begin{equation}\label{def_G_2}
P (b^\ve, \be(\bu^\ve)) =\left (\text{tr } \bbE^\ve(b^\ve, x) \be(\bu^\ve)\right)^{+}. 
\end{equation}
Notice that in both models, Model I and  Model II, the boundary conditions for $\n^\ve$ depend on $\mathcal N_\delta(\be(\bu^\ve))$, see \eqref{BC1}, because the elasticity equations do not provide enough regularity to consider  the trace of $\be(\bu^\ve)$ on the boundary $(0,T)\times \Gamma_{\cI}$. 

Next we shall analyse  the  two  microscopic  problems:  Model I  comprised of   equations \eqref{sumbal}--\eqref{sumbal2}
and Model~II   given by  equations \eqref{sumbal}, \eqref{BC1}--\eqref{BC4}.  The main difference in the analysis of the two  models  is related to   the regularity of $b^\ve$. 
If  we have an ordinary differential equation for $b^\ve$, then    $b^\ve$ has the same regularity   with respect to the spatial variables as the functions in the reaction terms.  
Whereas  in Model~II the  diffusion term in the equation  for $b^\ve$  in \eqref{sumbal_1} ensures higher  integrability and  spatial regularity of $b^\ve$. \\

We  adopt the following  notations for time-space domains:
$
\Omega_T= (0,T) \times \Omega$,
 $\Omega_{M,T}^\ve= (0,T) \times \Omega_M^\ve$, $\Gamma^\ve_T = (0,T)\times \Gamma^\ve$, 
$\Gamma_{\cI,T}= (0,T) \times \Gamma_\cI$, 
$\Gamma_{\cE,T}= (0,T) \times \Gamma_\cE$, $\Gamma_{\cU,T}= (0,T) \times \Gamma_\cU$,  $\Gamma_{\cE\cU,T}= (0,T) \times (\Gamma_{\cE} \cup \Gamma_\cU)$,  and define 
\begin{equation*}
\begin{aligned}
\cW(\Omega)&=\{\bu\in H^{1}(\Omega;\Real^3)\ \big |\; \ \int_\Omega\bu \, dx=\textbf{0},\; \;  \int_\Omega\big[(\nabla\bu)_{12}- (\nabla \bu)_{21}\big] \, dx=0, \; \text{and} \;  \bu\ \text{is $a_3$-periodic in  $x_3$}\},\\
\mathcal V(\Omega_{M}^\ve)&=\{v\in  H^1(\Omega_M^\ve) \,  \big | \; 
\; v\; \text{is $a_3$-periodic in   $x_3$}\},  \qquad 
\mathcal V(\Omega)=\{v\in  H^1(\Omega) \, \;  \big | \; 
\; v\; \text{is $a_3$-periodic in   $x_3$}\}.
\end{aligned}
\end{equation*}
By Korn's second inequality,  the $L^2$-norm of the strain 
\begin{equation*}
\norm{\bu}_{\cW(\Omega)}=\norm{\be(\bu)}_{L^2(\Omega)}\rfa \bu\in\cW(\Omega)
\end{equation*}
defines a norm on $\cW(\Omega)$, see e.g.\  \cite{CC, Korn, OShY}.   The Korn inequality holds  since $\cW(\Omega)\cap\cR(\Omega)=\{\textbf{0}\}$, see  e.g.\ \cite[Lemma~2.5]{OShY}, where the space of all rigid displacements of $\Omega$
\begin{align*}
\cR(\Omega)&=\{ \br\in H^1(\Omega;\Real^3)\ |\ \br(x)=\bd+\bW x\ \text{for}\ x\in\Omega,\ \bd\in\Real^3 \;  \text{and}\ \bW\ \text{is a skew matrix}\}
\end{align*}
is the kernel  of the  symmetric  gradient. To show that $\cW(\Omega)\cap\cR(\Omega)=\{\textbf{0}\}$, consider $\br\in\cW(\Omega)\cap\cR(\Omega)$ of the form $\br(x)=\bd+\bW x$  for all $x\in\Omega,$
where $x$ is viewed as a column vector.  It follows from the second condition in the definition of  $\cW(\Omega)$ that $\bW_{12}=0$.  Using the third condition,  we have $\br(0)=\br((0,0,a_3))$, which yields $\bW_{13}=\bW_{23}=0$.  Finally, since we now know that $\bW$ is zero, the first condition in $\cW(\Omega)$ implies that $\bd=\textbf{0}$, and hence  $\br=\textbf{0}$.

For a given  measurable set $\cA$ we use the notation $\langle \phi_1,\phi_2\rangle_{\cA}= \int_\cA \phi_1\phi_2\, dx,$
where the product of $\phi_1$ and $\phi_2$  is the scalar-product if they are vector valued.  

By 
$\langle \psi_1,\psi_2\rangle_{\mathcal V^\prime , \mathcal V}$ we denote the dual product between  $\psi_1 \in L^2(0,T; \mathcal V(\Omega_{M}^\ve)^\prime) $ and  $\psi_2 \in L^2(0,T; \mathcal V(\Omega_{M}^\ve))$ and  by 
$\langle \phi_1,\phi_2\rangle_{\mathcal V^\prime , \mathcal V(\Omega)}$ we denote the dual product between  $\phi_1 \in L^2(0,T; \mathcal V(\Omega)^\prime) $ and  $\phi_2 \in L^2(0,T; \mathcal V(\Omega))$.  

For some $\mu>0$ we define $\mathcal I^k_\mu= ( -\mu, + \infty)^k$, with $k \in \mathbb N$.

\begin{assumption}\label{assumptions}
\begin{itemize}
\item[1.]  $D_{\alpha}^j$ and $D_b$ are symmetric,  with   $(D_{\alpha}^j \boldsymbol{\xi}, \boldsymbol{\xi})\geq d_\alpha |\boldsymbol{\xi}|^2$,  $(D_b \boldsymbol{\xi}, \boldsymbol{\xi}) \geq d_b|\boldsymbol{\xi}|^2$ for all $\boldsymbol{\xi} \in \mathbb R^3$ and some  $d_b, d_\alpha > 0$,  where $\alpha=p,n$, $j =1,2$, and  $\gamma_p , \gamma_b \geq 0$.
\item[2.]  $\F_{p}: \mathbb R^2 \to \mathbb R^2$  is continuously  differentiable in $\mathcal I_\mu^2$, with $F_{p,1}(0, \eta)= 0$, $F_{p,2}(\xi, 0)= 0$, $F_{p,1}(\xi, \eta)\geq 0$, and  $|F_{p,2}(\xi, \eta)|\leq g_1(\xi)(1+ \eta)$  for all $\xi, \eta \in \mathbb R_+$ and some $g_1 \in C^1(\mathbb R_+; \mathbb R_+)$.
\item[3.] $\J_p: \mathbb R^2\to \mathbb R^2$ is continuously  differentiable in $\mathcal I_\mu^2$,     with   $J_{p,1}(0, \eta) \geq 0$, $J_{p,2}(\xi, 0) \geq 0$, $|J_{p,1}(\xi, \eta)|\leq \gamma_J(1+ \xi)$,  and $|J_{p,2}(\xi, \eta)|\leq g(\xi)(1+ \eta)$   
for  all $\xi,\eta \in \mathbb R_+$ and some  $\gamma_J >0$ and   $g \in C^1(\mathbb R_+; \mathbb R_+)$.
\item[4.]  $\F_n: \mathbb R^4 \to \mathbb R^2$ is continuously  differentiable in $\mathcal I_\mu^4$, with  $ F_{n,1}(\boldsymbol{\xi}, 0, \boldsymbol{\eta}_2)\geq 0$, $F_{n,2}(\boldsymbol{\xi}, \boldsymbol{\eta}_1, 0)\geq 0$, and 
  \begin{equation*}
  \begin{aligned}
 |F_{n,1}(\boldsymbol{\xi}, \boldsymbol{\eta})| \leq \gamma^1_F (1+ g_2(\boldsymbol{\xi})+|\boldsymbol{\eta}|), 
  && |F_{n,2}(\boldsymbol{\xi}, \boldsymbol{\eta})| \leq \gamma^2_F (1+ g_2(\boldsymbol{\xi})+|\boldsymbol{\eta}|), 
  \end{aligned}
  \end{equation*}
  for  all $\boldsymbol{\xi}, \boldsymbol{\eta} \in \mathbb R^2_+$ and some $\gamma_F^1, \gamma_F^2  >0$ and  $g_2 \in C^1(\mathbb R_+^2; \mathbb R_+)$. 
\item[5.] $\R_n: \mathbb R^3\times \mathbb R_+ \to  \mathbb R^2$ and  $R_b:  \mathbb R^3\times \mathbb R_+ \to  \mathbb R$  are continuously differentiable in $\mathcal I_\mu^3 \times \mathbb R_+$ and satisfy 
  \begin{equation*}
  \begin{aligned}
&R_{n,1}(0, \boldsymbol{\xi}_2, \eta,  \zeta) \geq 0, \qquad &&   |\R_{n,1}(\boldsymbol{\xi},  \eta ,  \zeta)| \leq \beta_1(1+|\boldsymbol{\xi}| + \eta) (1+ \zeta),\\
&R_{n,2}(\boldsymbol{\xi}_1, 0, \eta,  \zeta) \geq 0 ,    && |\R_{n,2}(\boldsymbol{\xi},  \eta ,  \zeta)| \leq \beta_2(1+|\boldsymbol{\xi}| + \eta) (1+ \zeta) ,   \\
&  R_b(\boldsymbol{\xi}, 0,  \zeta) \geq 0, && |R_b(\boldsymbol{\xi},  \eta ,  \zeta)| \leq \beta_3(1+|\boldsymbol{\xi}| + \eta) (1+\zeta),
  \end{aligned}
  \end{equation*}
  for  some $\beta_1, \beta_2, \beta_3>0$ and  all $\boldsymbol{\xi} \in \mathbb R^2_+$, $\eta, \zeta \in \mathbb R_+$.
\item[6.] $\J_n: \mathbb R^2\to \mathbb R^2$ is continuously  differentiable in $\mathcal I_\mu^2$,   with $J_{n,1}(0, \eta) \geq 0$, $J_{n,2}(\xi, 0) \geq 0$,    $|J_{n,1} (\xi, \eta) |\leq \gamma_{n}^1(1+ \xi)$,  and $|J_{n,2} (\xi, \eta)| \leq \gamma_{n}^2(1+ \xi+\eta)$   
for all $\xi, \eta \in \mathbb R_+$ and some $\gamma_{n}^1, \gamma_n^2 >0$. 
\item[7.]  $\G(\xi, \eta): \mathbb R^2 \to \mathbb R^2$, with  $\G(\xi, \eta)=(0,  \gamma_1  - \gamma_2 \eta)^T$ for $\eta \in \mathbb R$ and  some $\gamma_1, \gamma_2 \geq 0$. 
\item[8.] The initial conditions  $\p_0, \n_{0} \in  L^\infty(\Omega)^2$ and  $b_0 \in H^1(\Omega)\cap L^\infty(\Omega)$ are non-negative.
\item[9.] $\mathbf{f} \in H^1(0,T; L^2(\Gamma_{\cE}\cup \Gamma_{\cU}))^3$ and $p_{\cI} \in H^1(0,T; L^2(\Gamma_{\cI}))$.
\item[10.] $\mathbb E_M \in C^1(\mathbb R)$,  $\mathbb E_F$, $\mathbb E_M$ possess major and minor symmetries, i.e. $\mathbb E_{L, ijkl}= \mathbb E_{L, klij}=\mathbb E_{L, jikl}=\mathbb E_{L, ijlk}$, for $L=F,M$, 
  and there exists $\omega_E >0$ s.t.\  $\mathbb E_F \bA \cdot \bA \geq \omega_E |\bA|^2$ and  $\mathbb E_M(\xi) \bA \cdot \bA \geq \omega_E |\bA|^2$  for  all symmetric $\bA \in \mathbb R^{3\times 3}$ and $\xi \in \mathbb R_+$.
   There exists $\gamma_{M}>0$ s.t.\  $|\mathbb E_M(\xi)| \leq \gamma_{M} $  for all  $\xi \in \mathbb R_+$.
  \item[11.]  $\Q_n \in C( \mathbb R^3\times \mathbb R^{3\times 3};  \mathbb R^2)$ and  $Q_b \in C( \mathbb R^3\times \mathbb R^{3\times 3};  \mathbb R)$ satisfy
  $$
  \begin{aligned}
&  Q_{n,1}(0,\boldsymbol{\xi}_2, \eta,  \bA) \geq 0, \qquad Q_{n,2}(\boldsymbol{\xi}_1, 0, \eta,  \bA) \geq 0, \qquad Q_b(\boldsymbol{\xi}, 0,  \bA) \geq 0, \\
&  |\Q_n(\boldsymbol{\xi},  \eta ,  \bA)| +
   |Q_b(\boldsymbol{\xi},  \eta ,  \bA)| \leq \gamma_1(1+|\bA|) (1+|\boldsymbol{\xi}| + \eta),  \\
&  |\Q_n(\boldsymbol{\xi},\eta, \bA^1)-  \Q_n(\boldsymbol{\xi}, \eta, \bA^2)| +  |Q_b(\boldsymbol{\xi},\eta, \bA^1)-  Q_b(\boldsymbol{\xi}, \eta, \bA^2)| \leq \gamma_2(1+  |\boldsymbol{\xi} |+ \eta)|\bA^1 - \bA^2|,  \\
&   |\Q_n(\boldsymbol{\xi}^1, \eta^1,  \bA)-  \Q_n(\boldsymbol{\xi}^2, \eta^2,  \bA)| \leq \gamma_3(1+ |\bA|)( 1+ |\boldsymbol{\xi}^1|+ |\boldsymbol{\xi}^2| + |\eta^1|+ |\eta^2|)\big(|\boldsymbol{\xi}^1 - \boldsymbol{\xi}^2|   + |\eta^1 - \eta^2|\big),  \hspace{-0.5 cm } \\
 &  |Q_b(\boldsymbol{\xi}^1, \eta^1,  \bA)-  Q_b(\boldsymbol{\xi}^2, \eta^2,  \bA)| \leq \gamma_4(1+ |\bA|)( 1+ |\boldsymbol{\xi}^1|+ |\boldsymbol{\xi}^2| + |\eta^1|+ |\eta^2|) |\eta^1 - \eta^2|\\
  &\hspace{ 6 cm }  + \gamma_5( 1+ |\boldsymbol{\xi}^1|+ |\boldsymbol{\xi}^2| + |\eta^1|+ |\eta^2|) |\boldsymbol{\xi}^1 - \boldsymbol{\xi}^2|
  \end{aligned}
  $$
 for  some  $\gamma_l>0$, $l=1,\ldots, 5$, and all $\bA, \bA^1, \bA^2 \in \mathbb R^{3\times 3}$,  $\boldsymbol{\xi} \in \mathbb R^2_+$, $\eta \in \mathbb R_+$, $\boldsymbol{\xi}^j \in  \mathcal I_\mu^2$, $\eta^j \in \mathcal I_\mu^1$,  $j=1,2$.   
\end{itemize} 
\end{assumption}  
{\bf Remark 1.} Notice that for  $P$ of the form \eqref{def_G_2}  Assumption~\ref{assumptions}$.11$ is satisfied if the elasticity tensor for the cell wall matrix is bounded from above, as in  Assumption~\ref{assumptions}$.10$.  This assumption is not restrictive, since every biological material will have a maximal possible stiffness. 

{\bf Remark 2.} To prove the non-negativity of solutions $\p^\ve, \n^\ve, b^\ve$ of the systems \eqref{sumbal},  \eqref{sumbal_11} or \eqref{sumbal}, \eqref{sumbal_1}, with the boundary conditions in \eqref{BC1} and \eqref{BC4},  Lipschitz continuity of the reaction terms and nonlinear functions in the boundary conditions in an open neighbourhood of zero is needed. However it is sufficient to specify  the growth assumptions only for non-negative values  of $\p^\ve_1$,  $\p^\ve_2$, $\n^\ve_1$, $\n_2^\ve$, and  $b^\ve$.   The non-negativity  assumptions on the nonlinear functions in the reaction terms and boundary conditions   ensure the non-negativity  of the solutions of the system. The non-negativity of $F_{p,1}(\xi, \eta)$ and the sub-linearity  of $J_{p,1}(\xi, \eta)$, uniform in $\eta$,  for all $\xi, \eta \in \mathbb R_+$, are used to show  the uniform boundedness of $\p^\ve_1$.

{\bf Remark 3.} Notice that the reaction terms and boundary conditions in  the model developed in Section~9 satisfy  Assumption~\ref{assumptions}, with 
$\F_p(\p) = (R_{eE}(\p), 0)^T$, \, $\F_n(\p, \n) = (R_{eE}(\p) - 2R_{dc}(\n) - R_d \n_1, -R_{dc}(\n))^T$, \, 
$\R_n(\n, b, {\cN}_\delta(\be(\bu)))=(2 R_b(b)\, {\cN}_\delta(\be(\bu)),  R_b(b)\, {\cN}_\delta(\be(\bu)))^T $, \,  $R_b(\n, b, {\cN}_\delta(\be(\bu)))=R_{dc}(\n)- R_b(b)\, {\cN}_\delta(\be(\bu))$, \, 
$\J_p(\p)=(J_e(\langle \p_1, 1\ \rangle_{\Omega_M}), J_E(\langle \p_1, 1\ \rangle_{\Omega_M})  - \zeta_E \p_2)^T$, and $\J_n(\n)= ( - \gamma_d \n_1,  \gamma_{c,1} - \gamma_{c,2} \n_2)^T$,  where $\p=(n_e, n_E)^T$,  $\n=(n_d, n_c)^T$ and $b=n_b$. \\

Next we give the  definitions of  weak solutions of both microscopic problems: Model~I and Model~II. 
\begin{definition}
A weak solution of the microscopic problem  \eqref{sumbal}--\eqref{sumbal2} are functions $(\p^\ve, \n^\ve, b^\ve, \bu^\ve)$,   such that 
$\bu^\ve \in L^2(0,T;\cW(\Omega))$, 
  $b^\ve \in H^1(0,T; L^2(\Omega^\ve_M))$, $\p^\ve, \n^\ve \in L^2(0,T;\cV(\Omega^\ve_M))^2$, $\partial_t \p^\ve, \partial_t \n^\ve \in L^2(0,T; \cV(\Omega^\ve_M)^\prime)^2$    and 
satisfy the equations 
\begin{eqnarray}\label{weak_sol_n1}
\qquad \langle \partial_t \p^\ve, \boldsymbol{\phi}_p \rangle_{\cV, \cV^\prime} + \langle D_p \nabla \p^\ve, \nabla \boldsymbol{\phi}_p \rangle_{\Omega^\ve_{M,T}} 
= -\langle  \F_{p}(\p^\ve), \boldsymbol{\phi}_p \rangle_{\Omega^\ve_{M,T}}
+ \big\langle \J_p(\p^\ve), \boldsymbol{\phi}_p\big \rangle_{\Gamma_{\cI,T}}-  \big\langle \gamma_p\, \p^\ve, \boldsymbol{\phi}_p \big\rangle_{\Gamma_{\cE, T}}
\end{eqnarray}
and
\begin{eqnarray}\label{weak_sol_n11}
\begin{aligned}
\langle \partial_t \n^\ve, \boldsymbol{\phi}_n \rangle_{\cV, \cV^\prime}  + \langle D_n \nabla \n^\ve, \nabla \boldsymbol{\phi}_n \rangle_{\Omega^\ve_{M,T}}  = & \; 
 \big\langle \F_n(\p^\ve, \n^\ve)  +  \R_n(\n^\ve, b^\ve, \mathcal N_\delta(\be(\bu^\ve))), \boldsymbol{\phi}_n \big \rangle_{\Omega^\ve_{M,T}}
\\ & + \big\langle  \J_n(\n^\ve), \boldsymbol{\phi}_n \big\rangle_{\Gamma_{\cE, T}} + 
\big\langle \G(\n^\ve)\,  \mathcal N_\delta(\be(\bu^\ve)), \boldsymbol{\phi}_n \big\rangle_{\Gamma_{\cI,T}}
\end{aligned}
\end{eqnarray}
for all  $\boldsymbol{\phi}_\alpha\in L^2(0,T; \cV(\Omega_{M}^\ve))^2$,     where $\alpha=p,n$,  
\begin{equation}\label{weak_sol_n2}
 \partial_t b^\ve =  R_b(\n^\ve, b^\ve, \mathcal N_\delta(\be(\bu^\ve)))  \quad \text{a.e.\ in } \; \;   \Omega^\ve_{M,T}, 
\end{equation}
and 
\begin{equation}\label{weak_sol_u}
\big\langle \bbE^\ve(b^\ve,x)\be(\bu^\ve), \be(\boldsymbol{\psi}) \big\rangle_{\Omega_T} = -\langle p_\cI\, \bnu, \boldsymbol{\psi} \rangle_{\Gamma_{\cI,T}} +\langle \bff, \boldsymbol{\psi} \rangle_{\Gamma_{\cE\cU,T}}
\end{equation}
for all $\boldsymbol{\psi} \in L^2(0,T; \mathcal W(\Omega))$. Furthermore, $\p^\ve$, $\n^\ve$, and $b^\ve$ satisfy the  initial conditions in $L^2(\Omega_M^\ve)$, i.e.\ $\p^\ve(t, \cdot) \to \p_0$,  $\n^\ve(t, \cdot) \to \n_0$ in $L^2(\Omega_M^\ve)^2$,  and  $b^\ve(t, \cdot)  \to b_0$  in $L^2(\Omega_M^\ve)$  as $t \to 0$.
\end{definition}

A weak solution of  Model II  is defined in the following way.
\begin{definition}
A weak solution of the microscopic problem  \eqref{sumbal}, \eqref{BC1}--\eqref{BC4} are functions $(\p^\ve, \n^\ve, b^\ve, \bu^\ve)$,   such that 
\, $\bu^\ve \in L^2(0,T;\cW(\Omega))$, 
 $\p^\ve, \n^\ve \in L^2(0,T;\cV(\Omega^\ve_M))^2$, 
 $\partial_t \p^\ve, \partial_t \n^\ve \in L^2(0,T; \cV(\Omega^\ve_M)^\prime)^2$,  $b^\ve \in L^2(0,T;\cV(\Omega^\ve_M))$,  $\partial_t b^\ve \in L^2(0,T; \cV(\Omega^\ve_M)^\prime) $,  and  satisfy the equations \eqref{weak_sol_n1} and  \eqref{weak_sol_u}, and  
\begin{eqnarray}\label{weak_sol_n3}
\begin{aligned}
\langle \partial_t \n^\ve, \boldsymbol{\phi}_n \rangle_{\cV, \cV^\prime}  + \langle D_n \nabla \n^\ve, \nabla \boldsymbol{\phi}_n \rangle_{\Omega^\ve_{M,T}} & = \langle \F_n(\p^\ve, \n^\ve) +   \Q_n(\n^\ve, b^\ve,   \be(\bu^\ve)), \boldsymbol{\phi}_n \rangle_{\Omega^\ve_{M,T}} + 
 \langle  \J_n(\n^\ve), \boldsymbol{\phi}_n \rangle_{\Gamma_{\cE, T}}  \hspace{-1 cm} \\
&\qquad  +  \big\langle  \G(\n^\ve)\,  \mathcal N_\delta(\be(\bu^\ve)), \boldsymbol{\phi}_n \big\rangle_{\Gamma_{\cI,T}} , \\
\langle \partial_t b^\ve, \phi_b \rangle_{\cV, \cV^\prime}  + \langle D_b \nabla b^\ve, \nabla \phi_b \rangle_{\Omega^\ve_{M,T}}  & = 
\big\langle  Q_b(\n^\ve, b^\ve, \be(\bu^\ve)), \phi_b \big\rangle_{\Omega^\ve_{M,T}} 
- \langle  \gamma_b \, b^\ve, \phi_b \rangle_{\Gamma_{\cE, T}}, 
\end{aligned}
\end{eqnarray}
for all $\boldsymbol{\phi}_n \in L^2(0,T; \cV(\Omega_{M}^\ve))^2$, $\phi_b\in L^2(0,T; \cV(\Omega_{M}^\ve))$. Furthermore,  $\p^\ve$, $\n^\ve$, and $b^\ve$ satisfy the initial conditions   in $L^2(\Omega_M^\ve)^2$ and $L^2(\Omega_M^\ve)$, respectively.
\end{definition}

\section{Formulation of  main results}\label{main_results} 
The main results of the paper are the establishment of  the existence and uniqueness results for both of the  microscopic problems  and the rigorous  derivation of the macroscopic equations using homogenization techniques.
  
To show the well-posedness of the microscopic  problems we consider first the system of linear elasticity for a given $b^\ve$ and the  reaction-diffusion system for a given displacement $\bu^\ve$. 
The Lax--Milgram theorem is used to show the existence of a solution of the problem \eqref{sumbal2} for a given $b^\ve$, whereas the Galerkin method and the Schauder fixed-point theorem are applied to prove the well-posedness of both systems   \eqref{sumbal}--\eqref{BC1}  and \eqref{sumbal},~\eqref{BC1},~\eqref{sumbal_1}, \eqref{BC4} for a given~$\bu^\ve$.
 Then we apply  the   Banach fixed-point theorem  to show the existence and uniqueness of a weak  solution  of the coupled system.  Because of quadratic non-linearities the proof of the fixed-point argument is non-standard, and 
the main difficulty  is in deriving a contraction inequality involving  the $L^\infty$-norm of $b^\ve$. 

 In the case of  Model I  (no diffusion term in the equation for  $b^\ve$) the dependence  of  the reaction term in the equation for $b^\ve$   on  a local average of $\bbE^\ve(b^\ve, x)\be(\bu^\ve)$    is important  for the derivation of a contraction inequality.  For the proof of the strong convergence of $b^\ve$ it is crucial   that the average in $\mathcal N_\delta(\be(\bu^\ve))$ is independent of $\ve$. 

 The regularity of $b^\ve$ and  delicate estimates for the terms $\Q_n(\n^\ve, b^\ve, \be(\bu^\ve))$ and $Q_b(\n^\ve, b^\ve, \be(\bu^\ve))$ are used to prove the existence of  a  unique solution of Model II.   To derive the macroscopic equations for the problem \eqref{sumbal},~\eqref{BC1}--\eqref{BC4}  we   prove the strong two-scale convergence of $\be(\bu^\ve)$. More specifically, the strong two-scale convergence of $\be(\bu^\ve)$ is needed  to pass to the limit in the nonlinear functions   $\Q_n(\n^\ve, b^\ve, \be(\bu^\ve))$ and $Q_b(\n^\ve, b^\ve, \be(\bu^\ve))$.  
Recursive estimations of the $L^p$-norms, for all $p\geq 2$,  are used to derive a  contraction inequality in $L^\infty(0, T; L^\infty(\Omega_M^\ve))$. This method is also applied to show the boundedness of $\n^\ve$ and $b^\ve$,  although  other methods  can also be used  to derive the $L^\infty$-estimates  for $\n^\ve$ and $b^\ve$, see e.g.\ \cite{AndresMariya, Ladyzhenskaya, MP2008}.  The uniform in $\ve$ bounded\-ness of $\p^\ve$,  $\n^\ve$ and $b^\ve$ is used  in the proof of the  strong two-scale convergence of $\be(\bu^\ve)$.

\begin{theorem}\label{existence_2}
Under Assumption~\ref{assumptions} there exists a unique non-negative weak solution of the microscopic Model~I,  {\rm \eqref{sumbal}--\eqref{sumbal2}}, satisfying the   \textit{a priori} estimates 
\begin{equation}\label{estim_un_1}
\begin{aligned}
&\| \bu^\ve\|_{L^\infty(0,T;\cW(\Omega))} +   \|\partial_t \bu^\ve\|_{L^2(0,T; \cW(\Omega))} \leq C,   \\
 & \| b^\ve \|_{W^{1,\infty}(0,T; L^\infty(\Omega_{M}^\ve))}\leq C, \\
&\|\p^\ve\|_{L^\infty(0,T; L^\infty(\Omega_{M}^\ve))} + \|\nabla \p^\ve\|_{L^2( \Omega_{M,T}^\ve)}+ \|\n^\ve\|_{L^\infty(0,T; L^\infty(\Omega_{M}^\ve))} + \|\nabla \n^\ve\|_{L^2( \Omega_{M,T}^\ve)} \leq C,  \\
 &    \| \vartheta_h\p^\ve - \p^\ve\|_{L^2(\Omega_{M, T-h}^\ve)}+ \| \vartheta_h\n^\ve - \n^\ve\|_{L^2(\Omega_{M, T-h}^\ve)} \leq Ch^{1/4}, 
   \end{aligned}
\end{equation}
for all $h>0$, where $\vartheta_h v (t,x) = v(t +h, x)$ for $(t,x) \in (0, T-h] \times \Omega_M^\ve$ and  the  constant $C$ is independent of $\ve$.
\end{theorem}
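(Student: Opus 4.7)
The plan is to prove existence, uniqueness, non-negativity and the a priori bounds via a Banach fixed-point argument on the displacement $\bu^\ve$, with the elasticity problem and the reaction-diffusion-ODE system decoupled. First, for given $b \in L^\infty(\Omega_{M,T}^\ve)$, Assumption~\ref{assumptions}.10 together with Korn's inequality on $\cW(\Omega)$ makes the form $\langle \bbE^\ve(b(t,\cdot),\cdot)\be(\bu), \be(\boldsymbol{\psi})\rangle_\Omega$ coercive and continuous, so Lax--Milgram yields a unique $\bu^\ve(t,\cdot) \in \cW(\Omega)$ solving (\ref{weak_sol_u}) for a.e.\ $t$. Conversely, for given $\bu \in L^2(0,T;\cW(\Omega))$, $\mathcal{N}_\delta(\be(\bu))$ is bounded in $L^\infty(\Omega_T)$ by Assumption~\ref{assumptions}.10 and the $H^1$-bound on $\bu$; the system (\ref{sumbal})--(\ref{sumbal_11}) with boundary conditions in (\ref{BC1}) is then solved by Galerkin approximation in $\cV(\Omega_M^\ve)$ for $\p^\ve, \n^\ve$ combined with integration of the ODE for $b^\ve$ along $x = \text{const}$, and the nonlinear coupling between $(\p^\ve, \n^\ve)$ and $b^\ve$ is closed by a Schauder fixed-point argument on a bounded convex subset of $L^2$.

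Non-negativity follows from the invariant-region theory of \cite{Redlinger,Smoller}: the sign conditions on $\F_p, \F_n, \R_n, R_b$ and on $\J_p, \J_n, \G$ at the coordinate faces of $\mathbb R_+^5$ (Assumptions~\ref{assumptions}.2--7) make $\mathbb R_+^5$ positively invariant, so the non-negative initial data (Assumption~\ref{assumptions}.8) propagate. Uniform-in-$\ve$ $L^\infty$ bounds on $\p^\ve, \n^\ve$ follow from Alikakos iteration: testing with $(\p^\ve)^{2p-1}$ and $(\n^\ve)^{2p-1}$, combining the sub-linear growth in Assumptions~\ref{assumptions}.2--6 with uniform extension operators across the $\ve$-periodic microstructure (so that Sobolev-trace constants are $\ve$-independent), and bootstrapping $L^p$ into $L^\infty$. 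Grönwall applied to the ODE, using Assumption~\ref{assumptions}.5 and the boundedness of $\mathcal N_\delta(\be(\bu^\ve))$, then gives $b^\ve \in L^\infty$ and directly $\partial_t b^\ve \in L^\infty$. Differentiating (\ref{weak_sol_u}) in $t$, using that $\partial_t \bbE^\ve = \bbE_M'(b^\ve)\partial_t b^\ve\, \chi_{\Omega_M^\ve}$, testing with $\partial_t \bu^\ve$, and invoking Assumption~\ref{assumptions}.9 produces $\partial_t \bu^\ve \in L^2(0,T;\cW(\Omega))$. The translation estimates for $\p^\ve, \n^\ve$ follow by the standard duality trick, interpolating $\vartheta_h v - v$ between $L^2(L^2)$ and $L^2(H^1)$.

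The main obstacle is the contraction property of the fixed-point map $\Phi: \bu^\ve \mapsto \bu^\ve$. For two iterates $\bu^{\ve,i}$ ($i=1,2$) with associated $(\p^{\ve,i}, \n^{\ve,i}, b^{\ve,i})$, subtracting the elasticity equations, testing with $\bu^{\ve,1} - \bu^{\ve,2}$, and using the coercivity and Lipschitz dependence of $\bbE_M$ yields
\begin{equation*}
\|\be(\bu^{\ve,1}-\bu^{\ve,2})\|_{L^2(\Omega)} \leq C\, \|b^{\ve,1}-b^{\ve,2}\|_{L^\infty(\Omega_M^\ve)}\, \|\be(\bu^{\ve,1})\|_{L^2(\Omega)}.
\end{equation*}
Thus the contraction reduces to controlling $\|b^{\ve,1}-b^{\ve,2}\|_{L^\infty}$ by $\|\be(\bu^{\ve,1}-\bu^{\ve,2})\|_{L^2}$, and this is precisely where the non-local averaging $\mathcal N_\delta$ is essential: directly from its definition, one has the pointwise-in-$x$ bound
\begin{equation*}
|\mathcal N_\delta(\be(\bu^{\ve,1})) - \mathcal N_\delta(\be(\bu^{\ve,2}))|(t,x) \leq C(\delta)\bigl(1 + \|b^{\ve,1}\|_{L^\infty} + \|b^{\ve,2}\|_{L^\infty}\bigr)\, \|\be(\bu^{\ve,1}-\bu^{\ve,2})(t,\cdot)\|_{L^2(\Omega)}.
\end{equation*}
Subtracting the ODEs for $b^{\ve,i}$, raising to power $2p-1$, integrating over $\Omega_M^\ve$, exploiting the Lipschitz continuity of $R_b$ (Assumption~\ref{assumptions}.5) with $\p^{\ve,i}, \n^{\ve,i}, b^{\ve,i}$ already known to be uniformly bounded, and iterating in $p$ in the spirit of \cite{Alikakos}, produces
\begin{equation*}
\|b^{\ve,1}-b^{\ve,2}\|_{L^\infty(0,T_0;L^\infty(\Omega_M^\ve))} \leq C\, \|\be(\bu^{\ve,1}-\bu^{\ve,2})\|_{L^q(0,T_0;L^2(\Omega))}
\end{equation*}
for some $q \geq 2$ with a constant independent of $\ve$. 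Parallel Lipschitz estimates for $\p^\ve, \n^\ve$ combined with Grönwall yield a strict contraction on a short interval $[0,T_0]$; iteration in time extends it to $[0,T]$, and Banach's theorem supplies the unique fixed point, establishing existence, uniqueness and the claimed bounds simultaneously.
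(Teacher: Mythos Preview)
Your approach is essentially the paper's: Lax--Milgram for the elasticity subproblem, Galerkin plus Schauder/Schaefer fixed points for the reaction--diffusion--ODE block, invariant regions for non-negativity, Alikakos iteration for uniform $L^\infty$ bounds, and a Banach contraction closing the coupled system via the key observation that the local average $\mathcal N_\delta$ lifts $L^2$-control of $\be(\bu^\ve)$ to $L^\infty$-control in~$x$. The only cosmetic differences are that the paper runs the contraction map on $b^\ve$ in $L^\infty(0,T;L^\infty(\Omega_M^\ve))$ rather than on $\bu^\ve$ (the two compositions share fixed points), and that in the contraction step Alikakos is applied to the \emph{diffusion} equation for $\widetilde\n^{\ve}$---where it is genuinely needed to reach $L^\infty$ in space---after which the ODE for $\widetilde b^\ve$ is handled by a direct pointwise Gr\"onwall argument; note finally that your displayed bound on $\mathcal N_\delta(\be(\bu^{\ve,1}))-\mathcal N_\delta(\be(\bu^{\ve,2}))$ should also carry a term $\|b^{\ve,1}-b^{\ve,2}\|_{L^\infty}\|\be(\bu^{\ve,2})\|_{L^2}$ coming from the $b$-dependence of $\bbE_M$, which is harmlessly absorbed in the subsequent Gr\"onwall step.
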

A similar result also holds   for  Model II. The main difference in the proof of the well-posedness results for   both of the  microscopic problems (Model I and Model II)  is in the derivation of  {\it a priori} estimates. 
\begin{theorem}\label{existence_3}
Under Assumption~\ref{assumptions} there exists a unique non-negative weak solution of the microscopic Model~II,   {\rm  \eqref{sumbal},~\eqref{BC1}--\eqref{BC4}}, satisfying the   \textit{a priori} estimates 
\begin{eqnarray}
&&  \| \bu^\ve\|_{L^\infty(0,T;\cW(\Omega))}  \leq C, \label{estim_un_2}   \\
&& \begin{aligned} \label{estim_un_3}
 &\|\p^\ve\|_{L^\infty(0,T; L^\infty(\Omega_{M}^\ve))} + \|\nabla \p^\ve\|_{L^2( \Omega_{M,T}^\ve)} + \|\n^\ve\|_{L^\infty(0,T; L^\infty(\Omega_{M}^\ve))} + \|\nabla \n^\ve\|_{L^2( \Omega_{M,T}^\ve)}    \leq C, \\
 & \|b^\ve\|_{L^\infty(0,T; L^\infty(\Omega_{M}^\ve))} + \|\nabla b^\ve\|_{L^2( \Omega_{M,T}^\ve)} \leq C, \\
 &\| \vartheta_h\p^\ve - \p^\ve\|_{L^2(\Omega_{M, T-h}^\ve)}+ \| \vartheta_h\n^\ve - \n^\ve\|_{L^2(\Omega_{M, T-h}^\ve)} 
 + \| \vartheta_h b^\ve - b^\ve\|_{L^2(\Omega_{M, T-h}^\ve)}\leq C h^{1/4},  
 \end{aligned}   
\end{eqnarray}
for all $0<h<T$, where $\vartheta_h v (t,x) = v(t +h, x)$ for $(t,x) \in (0, T-h] \times \Omega_M^\ve$ and   the  constant $C$ is independent of $\ve$. 
\end{theorem}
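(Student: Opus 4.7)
My plan is to follow the same decoupling strategy used for Model I but with modifications tailored to the diffusion of $b^\ve$ and the pointwise (rather than locally averaged) dependence on $\be(\bu^\ve)$. First, for a fixed $b^\ve \in L^\infty(\Omega_{M,T}^\ve)$ I would solve the elasticity equation \eqref{sumbal2} by the Lax--Milgram theorem on $\cW(\Omega)$, using Korn's second inequality and the ellipticity in Assumption~1.10 to obtain a unique $\bu^\ve \in L^\infty(0,T;\cW(\Omega))$ with an estimate depending only on $\|\bff\|$ and $\|p_\cI\|$ (hence uniform in $\ve$ and in $b^\ve$). For a fixed $\bu^\ve$ with $\be(\bu^\ve) \in L^\infty(0,T;L^2(\Omega))$, I would then solve the full reaction-diffusion system \eqref{sumbal},~\eqref{sumbal_1} for $(\p^\ve,\n^\ve,b^\ve)$ by Galerkin approximation combined with the Schauder fixed-point theorem to handle the coupling through the nonlinear reaction terms. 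The coupled problem would then be closed by a Banach fixed-point argument applied to the map $b^\ve \mapsto \bu^\ve[b^\ve] \mapsto b^\ve_{\text{new}}$ on a suitable ball in $L^\infty(0,T;L^\infty(\Omega_M^\ve))$.

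For the a priori estimates I would first establish non-negativity of $\p^\ve, \n^\ve, b^\ve$ via the theory of positively invariant regions (Redlinger/Smoller), relying on the sign conditions in Assumption~1.2--1.6 and Assumption~1.11 (e.g.\ $Q_{n,1}(0,\boldsymbol{\xi}_2,\eta,\bA) \ge 0$ etc.). The $L^2$ and gradient estimates for $\p^\ve,\n^\ve,b^\ve$ follow from testing the weak formulations \eqref{weak_sol_n1} and \eqref{weak_sol_n3} with the functions themselves and using the linear-in-$|\bA|$ growth in Assumption~1.11 together with $\be(\bu^\ve) \in L^\infty_t L^2_x$, giving reaction terms controlled in $L^2_t L^2_x$. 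The crucial $L^\infty$-estimate, uniform in $\ve$, would be obtained by the Alikakos iteration: testing with $(b^\ve)^{2^k-1}$ and $(\n^\ve)^{2^k-1}$, using trace inequalities and the uniform extension operator from $\Omega_M^\ve$ to $\Omega$ (with operator norm independent of $\ve$), and iterating the resulting $L^p$-bounds with respect to $p=2^k$; the key input is that $\be(\bu^\ve) \in L^2$ is admissible as a source in a 3D parabolic equation since $2 > 3/2$. The time-translation estimate $\|\vartheta_h v - v\|_{L^2} \le Ch^{1/4}$ for $v = \p^\ve,\n^\ve,b^\ve$ follows by the standard interpolation-style argument: testing the equation for $\vartheta_h v - v$ against itself on $(0,T-h)$, using the $L^2$-bound on time derivatives in $\cV'$ and the $L^2$-bound on gradients.

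For uniqueness, given two solutions $(\p^{\ve,j},\n^{\ve,j},b^{\ve,j},\bu^{\ve,j})$, $j=1,2$, I would test the elasticity difference with itself to obtain
\[
\|\be(\bu^{\ve,1}-\bu^{\ve,2})\|_{L^2(\Omega)}^2 \le C\|b^{\ve,1}-b^{\ve,2}\|_{L^2(\Omega_M^\ve)}^2,
\]
using the Lipschitz dependence of $\bbE_M$ in $b^\ve$ together with the uniform $L^\infty$-bound on $\be(\bu^{\ve,j})$ (which must be derived, probably via elliptic regularity up to the geometry, or circumvented by testing with powers of the difference). Then testing the reaction-diffusion differences with $(b^{\ve,1}-b^{\ve,2})^{2p-1}$ etc., and using the Lipschitz estimates in Assumption~1.11 together with the linear-in-$|\bA|$ dependence of the reaction term differences on $\be(\bu^{\ve,1}-\bu^{\ve,2})$, produces $L^p$ contraction inequalities whose constants behave well as $p\to\infty$, yielding an $L^\infty$ contraction closed by Gronwall's inequality.

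The main obstacle, compared with Model~I, is the pointwise dependence of $\Q_n$ and $Q_b$ on $\be(\bu^\ve)$ which lies only in $L^2$. This forces the reaction terms for $\n^\ve$ and $b^\ve$ to be merely in $L^2_t L^2_x$ rather than bounded, so the Alikakos iteration for $L^\infty$-bounds must be executed with an unbounded source in $L^2$, and the recursive $L^p$-estimate used to derive the contraction inequality is much more delicate than in Model~I. A second, purely technical hurdle is ensuring all constants appearing in the Sobolev, trace, and extension inequalities on the $\ve$-dependent perforated domain $\Omega_M^\ve$ are independent of $\ve$, which requires invoking a uniform $H^1$-extension operator from $\Omega_M^\ve$ to $\Omega$ of the type constructed in Acerbi--Chiadò~Piat--Dal~Maso--Percivale.
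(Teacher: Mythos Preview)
Your overall architecture matches the paper's: Lax--Milgram for \eqref{sumbal2} at fixed $b^\ve$, Galerkin plus a Schauder/Schaefer fixed-point for the reaction-diffusion system at fixed $\bu^\ve$, then a Banach contraction in $L^\infty(0,T;L^\infty(\Omega_M^\ve))$ to close the coupling, with non-negativity via invariant regions, uniform $L^\infty$-bounds via Alikakos iteration, and the time-translation estimate by testing the time-integrated equation against $\vartheta_h v - v$.

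There is, however, a genuine gap in your contraction step. The estimate you write,
\[
\|\be(\bu^{\ve,1}-\bu^{\ve,2})\|_{L^2(\Omega)}^2 \le C\|b^{\ve,1}-b^{\ve,2}\|_{L^2(\Omega_M^\ve)}^2,
\]
would require a uniform $L^\infty$-bound on $\be(\bu^{\ve,2})$: subtracting the two elasticity equations gives
\[
\omega_E\|\be(\widetilde\bu)\|_{L^2(\Omega)} \le \big\|\big(\bbE_M(b^{\ve,1})-\bbE_M(b^{\ve,2})\big)\be(\bu^{\ve,2})\big\|_{L^2(\Omega_M^\ve)} \le C\big\|(b^{\ve,1}-b^{\ve,2})\,\be(\bu^{\ve,2})\big\|_{L^2(\Omega_M^\ve)},
\]
and to pull out $\|b^{\ve,1}-b^{\ve,2}\|_{L^2}$ one needs $\be(\bu^{\ve,2})\in L^\infty$. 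No such bound is available here: the coefficients $\bbE^\ve(b^\ve,x)$ are merely bounded and measurable (they vary with the $\ve$-periodic microstructure and with $b^\ve\in L^\infty$), and divergence-form elliptic operators with such coefficients do not in general have Lipschitz solutions; Meyers-type results give only $\nabla\bu^\ve\in L^{2+\delta}$ for some small $\delta$, and even that would need separate work to make uniform in $\ve$. Your fallback ``circumvented by testing with powers of the difference'' is not a mechanism that produces gradient $L^\infty$-bounds for an elliptic system. The correct estimate, which the paper uses in Lemma~\ref{lemub}, keeps the $L^\infty$-norm of the $b$-difference on the right:
\[
\|\be(\bu^{\ve,1}-\bu^{\ve,2})\|_{L^\infty(0,T;L^2(\Omega))} \le C\|b^{\ve,1}-b^{\ve,2}\|_{L^\infty(0,T;L^\infty(\Omega_M^\ve))},
\]
which follows directly from H\"older and the uniform bound $\|\be(\bu^{\ve,2})\|_{L^\infty(0,T;L^2(\Omega))}\le C$. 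This is exactly why the contraction must run in $L^\infty_t L^\infty_x$ for $b^\ve$: the Alikakos iteration applied to the \emph{difference} equations (the paper's \eqref{esrim_diff_1}--\eqref{estim_cont_m_2}, using Gagliardo--Nirenberg with exponent $a=9/10$ and a H\"older splitting in time) gives $\|\widetilde b^{\ve}\|_{L^\infty(0,\tau;L^\infty)} \le C\|\be(\widetilde\bu^{\ve})\|_{L^{q}(0,\tau;L^2)}$ for some finite $q>2$, which combines with the estimate above to yield a contraction for small $\tau$. Once you replace your $L^2$--$L^2$ elasticity estimate by this $L^\infty$--$L^2$ version, the rest of your plan goes through as in the paper.
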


Using the \textit{a priori} estimates  in Theorems~\ref{existence_2}~and~\ref{existence_3}  and applying  the   two-scale convergence and the unfolding method, see e.g.~\cite{allaire,CDG,CDDGZ,Nguetseng}, we derive the macroscopic equations  for both microscopic models of plant cell wall biomechanics.  
First we formulate the unit cell problems,  which are obtained by the derivation of the macroscopic equations and  determine the macroscopic elastic and diffusive properties of the plant cell wall.   

The macroscopic diffusion coefficients and  elasticity tensor  are defined by  
\begin{equation}\label{macro_coef}
\begin{aligned}   
&\mathcal D^l_{\alpha, ij} =\dashint_{\hat Y_M} \big[ D_{\alpha, ij}^l  +  (D_\alpha^l \hat \nabla_{y}{\bf v}^j_{\alpha, l})_i \big] d\hat y \;\;    \text{ for }   i,j=1,2,3, \quad \hat \nabla_y  {\bf v}^j_{\alpha, l} =(\partial_{y_1}   {\bf v}^j_{\alpha, l}, \partial_{y_2}   {\bf v}^j_{\alpha, l}, 0)^T,
 \\
&\mathcal D_{b, ij} =\dashint_{\hat Y_M} \big[ D_{b, ij}  +  (D_b\hat \nabla_{y} {v}^j_{b})_i \big] d\hat y  \qquad  \text{ for }   i,j=1,2,3, \quad \hat \nabla_y  {v}^j_{b} =(\partial_{y_1}   {v}^j_{b}, \partial_{y_2}   {v}^j_{b}, 0)^T,  
 \\
&\mathbb E_{{\rm hom},ijkl} (b)=  \dashint_{\hat Y}  \big[\mathbb E_{ijkl} (b,y) + \big(\mathbb E (b,y) \hat \be_y(\textbf{w}^{ij})\big)_{kl} \big] d\hat y, \qquad\;  i,j,k,l=1,2,3,
\end{aligned}
\end{equation}
where $\hat y= (y_1, y_2)$, $\alpha = p,n$ and $l=1,2$,   and the functions   ${\bf v}_\alpha^j=({\bf v}_{\alpha,1}^j, {\bf v}_{\alpha,2}^j)^T$, $v_b^j$ and $\textbf{w}^{ij}$ are solutions of the unit cell problems
\begin{equation}\label{unit_n}
\begin{aligned}
&{\rm div}_{\hat y} (\hat D_\alpha^l \nabla_{\hat y} {\bf v}^j_{\alpha, l}) =0 &&\text{ in } \hat Y_M, \qquad j=1,2,3 \\
&  (\hat D_\alpha^l \nabla_{\hat y} {\bf v}^j_{\alpha, l} + \widetilde D_\alpha^l {\textbf{b}}_j) \cdot \bnu =0  &&\text{ on } \hat \Gamma, \qquad 
{\bf v}^j_{\alpha, l}  \quad \hat Y -\text{ periodic}, \; \quad  \langle {\bf v}^j_{\alpha, l}, 1 \rangle_{\hat Y_M} =0
\end{aligned}
\end{equation}
for $\alpha=n,p$, $l=1,2$, and $\hat D_\alpha^l =(D_{\alpha, ij}^l)_{i,j=1,2}$,  
$\widetilde D_\alpha^l =(D_{\alpha, ij}^l)_{i=1,2, j=1,2,3}$,
\begin{equation}\label{unit_b}
\begin{aligned}
&{\rm div}_{\hat y} (\hat D_b \nabla_{\hat y} {v}^j_b) =0 &&\text{ in } \hat Y_M, \qquad j=1,2,3 \\
& ( \hat D_b \nabla_{\hat y} v^j_b + \widetilde D_b {\textbf{b}}_j) \cdot \bnu =0  &&\text{ on } \hat \Gamma, \qquad 
v^j_b  \quad \hat Y -\text{ periodic}, \; \quad  \langle  v^j_b, 1 \rangle_{\hat Y_M} =0,
\end{aligned}
\end{equation}
  where $\hat D_b =(D_{b, ij})_{i,j=1,2}$ and $\widetilde D_b =(D_{b, ij})_{i=1,2, j=1,2,3}$,   and     
\begin{equation}\label{unit_u}
\begin{aligned}
&\hat{\rm div}_{y}  ({\mathbb E}(b, y)(\hat \be_{y}(\textbf{w}^{ij}) + \textbf{b}^{ij})) ={\bf 0} \quad &&\text{in }  \hat Y, \\
&\langle  \textbf{w}^{ij}, 1 \rangle_{\hat Y}  ={\bf 0}, \qquad \qquad \hspace{1 cm} \textbf{w}^{ij} \quad   && \hat Y -\text{ periodic}
\end{aligned}
\end{equation}
for  $(t,x) \in \Omega_T$. Here,     
$\textbf{b}^{jk} = \frac12 ({\textbf{b}}_j \otimes\textbf{b}_k +  \textbf{b}_k \otimes \textbf{b}_j)$, where $(\textbf{b}_j)_{1\leq j\le3}$ is the canonical basis of $\mathbb R^3$.
 For a vector-valued function  ${\bf v}$ we denote  $\hat{\rm div}_y  {\bf v} = \partial_{y_1} {\bf v}_1 +  \partial_{y_2} {\bf v}_2$ and
 $\hat \be_y({\bf v})$ is defined in the following way: 
$\hat \be_{y} ({\bf v})_{33} = 0$,   $\hat \be_{y} ({\bf v})_{3j} =\hat \be_{y} ({\bf v})_{j3} =  \frac 12 \partial_{y_j} {\bf v}_3$ for $j=1,2$, 
and $\hat \be_{y} ({\bf v})_{ij} =  \frac 12 (\partial_{y_i} {\bf v}_j +  \partial_{y_j} {\bf v}_i)$ for $i,j=1,2$. \\

We have the following macroscopic  equations for the microscopic models of plant cell wall biomechanics. 
\begin{theorem}\label{th:macro_1}
A sequence of solutions of the  microscopic problem  {\rm \eqref{sumbal}--\eqref{sumbal2}} converges to a solution  $(\p, \n, b, \bu)$, with   $\p, \n \in \big(L^2(0,T; \mathcal V(\Omega)) \cap L^\infty(0,T; L^\infty(\Omega))\big)^2$,  $b \in H^1(0,T; L^2(\Omega))\cap L^\infty(0,T; L^\infty(\Omega))$, and  $\bu \in L^\infty(0,T; \mathcal W(\Omega))$,  of the macroscopic equations 
\begin{eqnarray}
 \phantom{ \partial_t n_E -}\;  {\rm div} ( \mathbb E_{\rm hom}( b) \be(\bu)) =\; {\bf 0}
   \hspace{ 5.4 cm} && \text{ in } \Omega_T, \label{macro_1}\\
  \begin{cases}\label{macro_11}
 \partial_t  \p - {\rm div} (\mathcal D_p \nabla \p)\;  =  - \F_{p} (\p)    \\  
\partial_t \n- {\rm div} (\mathcal D_n \nabla \n) =\; \F_n (\p, \n) +  \R_n(\n, b, \mathcal N^{\rm eff}_\delta(\be(\bu)))   \\
\partial_t  b    \phantom{- {\rm div} (\mathcal D_c \nabla n_c)}\; \,  =  \; \, R_b(\n, b,  \mathcal N^{\rm eff}_\delta (\be(\bu)) ) 
\end{cases}  \quad &&\text{ in } \Omega_T, \; \; 
\end{eqnarray}
   together with the boundary and initial  conditions 
\begin{eqnarray}
&&\begin{cases}\label{macro_bc_1}
\begin{aligned}
& \mathcal D_p \nabla \p\,  \bnu = \theta_M^{-1} \J_p(\p) &&   \text{on } \Gamma_{\cI,T},  \qquad  \mathcal D_p \nabla \p  \,\bnu =  - \theta_M^{-1} \gamma_p \, \p   &&  \text{on } \Gamma_{\cE, T}\\
& \mathcal D_n \nabla \n \, \bnu = \theta_M^{-1} \,  \G(\n)\, \mathcal N^{\rm eff}_\delta(\be(\bu))   &&   \text{on } \Gamma_{\cI, T}, \qquad
  \mathcal D_n \nabla \n \, \bnu =  \;  \theta_M^{-1} \J_n(\n)  &&  \text{on } \Gamma_{\cE, T}, \\
&\mathcal D_p \nabla \p \, \bnu = 0, \quad \qquad \; \mathcal D_n \nabla \n \, \bnu = 0 &&   \text{on } \Gamma_{\mathcal U,T},  \\
&\;  \; \p (0,x) = \p_{0} (x), \qquad \n (0,x) = \n_{0} (x) &&  \text{in }  \Omega,  \qquad \qquad  b (0,x) = b_{0} (x) &&  \text{in }  \Omega,  \\
& \mathbb E_{\rm hom}(b) \be(\bu) \,\bnu = - p_\cI \bnu \quad \qquad \;  && \text{on } \Gamma_{\cI, T},  \qquad 
\mathbb E_{\rm hom}(b) \be(\bu)\,  \bnu = \bff   &&  \text{on } \Gamma_{\cE, T}\cup \Gamma_{\cU, T},
\end{aligned}
\end{cases}
\end{eqnarray}
where  $\theta_M= |\hat Y_M|/ |\hat Y|$ and 
\begin{equation}\label{macro_over_n_eu}
\mathcal N^{\rm eff}_\delta(\be(\bu))(t,x)= \Big(\dashint_{B_\delta(x)\cap \Omega} {\rm tr}\,\big( \bbE_{{\rm{hom}}}(b)\be(\bu) \big) \, d\tilde x\Big)^+ \qquad \qquad \text{ for } (t,x) \in \Omega_T.
\end{equation}  
Here ${\rm div} (\mathcal D_p \nabla \p) = ( {\rm div} (\mathcal D^1_p \nabla \p_1), {\rm div} (\mathcal D^2_p \nabla \p_2))^T$ and  
${\rm div} (\mathcal D_n \nabla \n) = ( {\rm div} (\mathcal D^1_n \nabla \n_1), {\rm div} (\mathcal D^2_n \nabla \n_2))^T$, and 
the macroscopic  diffusion coefficients  $\mathcal D_\alpha^l$,  for $\alpha=p,n$ and  $l=1,2$,  are defined  in \eqref{macro_coef}.
\end{theorem}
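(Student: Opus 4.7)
The plan is to combine the uniform estimates from Theorem~\ref{existence_2} with two-scale convergence and the periodic unfolding operator to pass to the limit in each equation, then identify the effective coefficients through the cell problems \eqref{unit_n}--\eqref{unit_u}. First, from \eqref{estim_un_1} I would extract (up to a subsequence) a two-scale limit $\bu \in L^\infty(0,T;\mathcal{W}(\Omega))$ for $\bu^\ve$ together with a corrector $\bu_1 \in L^\infty(0,T; L^2(\Omega; H^1_{\rm per}(\hat Y)/\mathbb R))^3$ such that $\be(\bu^\ve) \rightharpoonup \be(\bu) + \hat\be_y(\bu_1)$ in the two-scale sense. The uniform $L^\infty$-bound on $b^\ve$ and $\p^\ve,\n^\ve$, combined with the $L^2$-bound on the spatial gradients and the time-translation estimates of order $h^{1/4}$, yields via a Kolmogorov-type compactness argument (applied after extension from $\Omega_M^\ve$ to $\Omega$ using the standard extension operator for periodically perforated domains) strong $L^2$-convergence of extensions $\tilde\p^\ve \to \p$, $\tilde\n^\ve \to \n$, and two-scale limits in $L^2(0,T; L^2(\Omega; H^1(\hat Y_M)))$ for the gradients. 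For $b^\ve$ the $W^{1,\infty}$-bound in time gives weak-$*$ convergence in $L^\infty$ but only for the unfolded sequence $\mathcal{T}_\ve(b^\ve)$; the key observation, which I will return to below, is that the functional $\mathcal{N}_\delta$ is $\ve$-independent in its averaging kernel.

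Second, in the elasticity equation \eqref{weak_sol_u} I would choose test functions of the form $\boldsymbol\psi(x) + \ve\,\boldsymbol\psi_1(x,\hat x/\ve)$ with $\boldsymbol\psi_1$ smooth and $\hat Y$-periodic in $y$, pass to the two-scale limit using that $\mathbb E^\ve(b^\ve,x) = \mathbb E(b^\ve(t,x), \hat x/\ve)$ depends continuously on its first argument. Here a continuity-and-pointwise-convergence argument (Egorov, together with boundedness of $\mathbb E_M$) lets me replace $\mathbb E(b^\ve,\hat x/\ve)$ by $\mathbb E(b,\hat x/\ve)$ in the weak limit provided $b^\ve \to b$ a.e.\ on $\Omega_T$. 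The corrector $\bu_1$ is identified as the superposition $\bu_1(t,x,y) = \sum_{i,j} \be(\bu)_{ij}(t,x)\,\textbf{w}^{ij}(y,b(t,x))$ solving \eqref{unit_u}, and the Fubini-type computation then yields \eqref{macro_1} with $\mathbb E_{\rm hom}(b)$ as in \eqref{macro_coef}. The boundary conditions on $\Gamma_\cI, \Gamma_\cE, \Gamma_\cU$ transfer directly since these parts of $\partial\Omega$ do not intersect the oscillating interface $\Gamma^\ve$.

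The decisive and hardest step is the passage to the limit in the coupled reaction-diffusion-ODE system \eqref{weak_sol_n1}--\eqref{weak_sol_n2}, and in particular the identification of the limit of $b^\ve$. Because $\mathcal{N}_\delta(\be(\bu^\ve))$ involves an average over a fixed ball $B_\delta(x)$ that does not depend on $\ve$, the weak two-scale convergence of $\mathbb E^\ve(b^\ve,x)\be(\bu^\ve)$ combined with the independence of the test function $\chi_{B_\delta(x)\cap\Omega}/|B_\delta(x)\cap\Omega|$ from the fast variable $y$ gives $\mathcal{N}_\delta(\be(\bu^\ve)) \to \mathcal{N}^{\rm eff}_\delta(\be(\bu))$ strongly in $L^2(\Omega_T)$, where the effective form \eqref{macro_over_n_eu} arises because the $y$-average of $\mathbb E(b,y)(\be(\bu) + \hat\be_y(\bu_1))$ equals $\mathbb E_{\rm hom}(b)\be(\bu)$. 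Applying the unfolding operator $\mathcal{T}_\ve$ to the ODE \eqref{weak_sol_n2}, using that the right-hand side depends on $\mathcal{N}_\delta(\be(\bu^\ve))$ which is $\ve$-independent and converges strongly, and exploiting the Lipschitz continuity of $R_b$ in a bounded region together with Gronwall, I obtain strong convergence $\mathcal{T}_\ve(b^\ve) \to b$ in $L^2((0,T)\times\Omega\times \hat Y_M)$ where the limit $b$ is independent of $y$ and satisfies the ODE in \eqref{macro_11}. This then suffices to pass to the limit in $\F_n, \J_n, \G$ and $\R_n$ via Vitali's convergence theorem using the uniform $L^\infty$-bounds.

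Finally, the factor $\theta_M = |\hat Y_M|/|\hat Y|$ on the boundary terms in \eqref{macro_bc_1} comes from unfolding the surface integrals on $\Gamma_{\cI,T}$ and $\Gamma_{\cE,T}$ (which, on the cell level, correspond to the full boundary of $\hat Y$ so that normalization by $|\hat Y_M|$ produces the factor), while the macroscopic cell problems \eqref{unit_n} for $\p, \n$ are obtained in the standard way from the two-scale limit of the diffusion terms on the perforated matrix. Uniqueness of the macroscopic solution follows by the same Banach fixed-point scheme used at the microscopic level: the dependence of $\R_n, R_b$ on the nonlocal average $\mathcal{N}^{\rm eff}_\delta$ and the smoothness of $\mathbb E_{\rm hom}$ with respect to $b$ (inherited from Assumption~\ref{assumptions}.10) allow a contraction inequality for the $L^\infty$-norm of the difference of two macroscopic solutions in terms of the $L^q$-norm of $\be(\bu^1 - \bu^2)$, closing by Alikakos-type iteration. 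The main obstacle I anticipate is the strong convergence of $b^\ve$: this is where the structural choice of an $\ve$-independent, nonlocal reaction rate in Model~I is essential, since neither $L^p$-gradient bounds nor classical Aubin--Lions compactness are available for an ODE on a perforated oscillating domain.
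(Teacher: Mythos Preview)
Your overall architecture matches the paper's proof closely: extract two-scale limits via the estimates in Theorem~\ref{existence_2}, use oscillating test functions to identify the correctors through the cell problems~\eqref{unit_n}--\eqref{unit_u}, and close with a contraction argument for uniqueness. The handling of the diffusion equations, the elasticity limit, and the appearance of $\theta_M^{-1}$ in the boundary conditions are all essentially as in the paper.

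There is, however, a genuine circularity in your treatment of the strong convergence of $b^\ve$. You propose to obtain $\mathcal N_\delta(\be(\bu^\ve))\to\mathcal N_\delta^{\rm eff}(\be(\bu))$ strongly in $L^2(\Omega_T)$ from ``the weak two-scale convergence of $\mathbb E^\ve(b^\ve,x)\be(\bu^\ve)$,'' and then feed this into the unfolded ODE plus Gronwall to deduce $\mathcal T_\ve(b^\ve)\to b$. But the two-scale limit of $\mathbb E^\ve(b^\ve,x)\be(\bu^\ve)=\big(\mathbb E_M(b^\ve)\chi_{\hat Y_M}(\hat x/\ve)+\mathbb E_F\chi_{\hat Y_F}(\hat x/\ve)\big)\be(\bu^\ve)$ cannot be identified without already knowing that $b^\ve\to b$ strongly, since $\mathbb E_M$ depends nonlinearly on $b^\ve$; a product of a merely bounded sequence and a weakly two-scale convergent sequence need not converge two-scale to the expected product. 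The same issue recurs in your elasticity paragraph, where you invoke ``provided $b^\ve\to b$ a.e.'' before that has been established. Moreover, even granting the two-scale limit, testing against the $y$-independent kernel $\chi_{B_\delta(x)\cap\Omega}$ yields only \emph{weak} $L^2$ convergence of the $\delta$-average for each $x$, not strong convergence in $L^2(\Omega_T)$; the positive part $(\,\cdot\,)^+$ does not commute with weak limits.

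The paper breaks this circle by proving strong convergence of $b^\ve$ \emph{first}, independently of any convergence of $\mathcal N_\delta$, via a Kolmogorov compactness argument applied to an extension of $b^\ve$ to all of $\Omega$ (Lemma~\ref{strong_nb_2}). The key translation estimate uses that for the shifted averaging region $B_{\delta,h}(x)=[B_\delta(x+\mathbf h)\setminus B_\delta(x)]\cup[B_\delta(x)\setminus B_\delta(x+\mathbf h)]$ one has $|B_{\delta,h}(x)\cap\Omega|\leq C\delta^2 h$, so that
\[
\Big\|\int_{B_{\delta,h}(x)\cap\Omega}\mathrm{tr}\,\mathbb E^\ve(b^\ve)\be(\bu^\ve)\,d\tilde x\Big\|_{L^2(\Omega_{2h})}^2\leq C_\delta\, h
\]
uniformly in $\ve$, which together with the $H^1$-bound on $\n^\ve$, the regularity of $b_0$, and Gronwall gives $\|b^\ve(\cdot,\cdot+\mathbf h)-b^\ve\|_{L^2}^2\leq C_\delta h$. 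Only \emph{after} $\mathcal T_\ve(b^\ve)\to b$ is in hand does the paper deduce the strong convergence of $\mathcal N_\delta(\be(\bu^\ve))$ (using also the $\partial_t\bu^\ve$ and $\partial_t b^\ve$ bounds for time compactness) and pass to the limit in the elasticity and reaction terms. Your Gronwall-on-the-unfolded-ODE idea is in fact the route the paper takes in the \emph{strain}-dependent variant (Lemma~\ref{strong_nb_1}), where $\mathcal N_\delta$ does not involve $b^\ve$ and the circularity disappears; for the stress-dependent $\mathcal N_\delta$ of Theorem~\ref{th:macro_1} you need the Kolmogorov route instead.
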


The difference between the macroscopic problems for Model~I and Model~II is reflected in the equations for  $\n$~and~$b$. 
\begin{theorem}\label{th:macro_2}
A sequence of solutions of the microscopic problem  {\rm \eqref{sumbal}, \eqref{BC1}--\eqref{BC4}}  converges to a solution
$\p, \n \in \big(L^2(0,T; \mathcal V(\Omega)) \cap L^\infty(0,T; L^\infty(\Omega))\big)^2$,  $b \in L^2(0,T; \mathcal V(\Omega)) \cap L^\infty(0,T; L^\infty(\Omega))$,  and $\bu \in L^\infty(0,T; \mathcal W(\Omega))$
 of the macroscopic equations  \eqref{macro_1} and 
\begin{equation}\label{macro_2}
\begin{cases}
\partial_t  \p - {\rm div} (\mathcal D_p \nabla \p)\, =  - \F_{p} (\p)    \\ 
  \partial_t \n- {\rm div} (\mathcal D_n \nabla \n) =\; \;  \F_n (\p, \n) +   \Q_n^{\rm eff}(\n, b, \be(\bu) )  \\
\partial_t b - {\rm div} (\mathcal D_b \nabla b) \;\; = \phantom{\; \;  \F_n (\p, \n) +\; \; } Q_b^{\rm eff}(\n, b, \be(\bu))
\end{cases} \quad \text{ in } \;  \Omega_T, 
\end{equation}
   together with the initial and boundary conditions \eqref{macro_bc_1} and 
\begin{equation}\label{macro_bc_2}
\begin{aligned}
&\mathcal D_b \nabla b \cdot \bnu = 0 && \text{on } \Gamma_{\cI, T}\cup \Gamma_{\cU, T},  \quad && \mathcal D_b \nabla b \cdot \bnu =  - \theta_M^{-1}\, \gamma_b \, b   &&  \text{on } \Gamma_{\cE, T}. 
\end{aligned}
\end{equation}
Here    $\Q^{\rm eff}_n (\n, b, \be(\bu))=\dashint_{\hat Y_M} \hspace{-0.15 cm } \Q_n(\n, b, \mathbb W(t,x,y) \be(\bu)) \, dy$, 
$Q^{\rm eff}_b (\n, b, \be(\bu))=\dashint_{\hat Y_M}  \hspace{-0.15 cm } Q_b(\n, b, \mathbb W(t,x,y) \be(\bu)) \, dy$, 
where   
$\mathbb W_{ijkl}(t,x,y) = \delta_{ik}\delta_{jl} + \big( \hat \be_y({\bf w}^{ij} (t,x,y)) \big)_{kl}$ and  ${\bf w}^{ij}$ being  solutions of the unit cell problems \eqref{unit_u}.  

The macroscopic  diffusion coefficients  $\mathcal D_\alpha$  are defined as in \eqref{macro_coef}, with $\alpha = n, p, b$,  where 
${\rm div} (\mathcal D_p \nabla \p) = ( {\rm div} (\mathcal D^1_p \nabla \p_1), {\rm div} (\mathcal D^2_p \nabla \p_2))^T$ and 
${\rm div} (\mathcal D_n \nabla \n) = ( {\rm div} (\mathcal D^1_n \nabla \n_1), {\rm div} (\mathcal D^2_n \nabla \n_2))^T$.
\end{theorem}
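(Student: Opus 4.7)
The plan is to combine the $\ve$-independent a priori estimates of Theorem~\ref{existence_3} with two-scale convergence and the periodic unfolding operator, to identify the cell problems \eqref{unit_n}--\eqref{unit_u} and thereby the homogenised coefficients \eqref{macro_coef}, and finally to upgrade the convergence of $\be(\bu^\ve)$ to strong two-scale convergence so that the limits of the nonlinear reaction terms $\Q_n(\n^\ve,b^\ve,\be(\bu^\ve))$ and $Q_b(\n^\ve,b^\ve,\be(\bu^\ve))$ can be identified.

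First, up to a subsequence, the bounds \eqref{estim_un_2}--\eqref{estim_un_3} yield the two-scale limits $\p^\ve \to \p$, $\n^\ve \to \n$, $b^\ve \to b$ together with $\nabla \p^\ve \to \nabla_x \p + \hat\nabla_y \p_1$, $\nabla \n^\ve \to \nabla_x \n + \hat\nabla_y \n_1$, $\nabla b^\ve \to \nabla_x b + \hat\nabla_y b_1$, where the correctors are $\hat Y$-periodic in $y$, and $\p,\n \in L^2(0,T;\cV(\Omega))^2 \cap L^\infty(\Omega_T)^2$, $b \in L^2(0,T;\cV(\Omega))\cap L^\infty(\Omega_T)$. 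Standard two-scale compactness for the elasticity system produces $\bu^\ve \to \bu$ and $\be(\bu^\ve) \to \be(\bu) + \hat\be_y(\bu_1)$, with $\bu \in L^\infty(0,T;\cW(\Omega))$ and a corrector $\bu_1 \in L^2(\Omega_T; H^1_{\rm per}(\hat Y)^3)$ modulo rigid motions in $y$. The fractional equicontinuity in time of \eqref{estim_un_3}, applied through the unfolding operator $\mathcal T^\ve$ together with a Kolmogorov--Riesz argument (and Aubin--Lions for $b^\ve$ using its $L^2(0,T;\cV(\Omega_M^\ve)')$ time derivative), upgrades the two-scale convergences of $\p^\ve,\n^\ve,b^\ve$ to strong two-scale convergences.

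Testing \eqref{weak_sol_n1}, \eqref{weak_sol_n3} and \eqref{weak_sol_u} against oscillating functions of the form $\phi(t,x) + \ve\,\psi(t,x)\chi(\hat x/\ve)$ with $\chi \in C^\infty_{\rm per}(\hat Y)$ (supported in $\hat Y_M$ where appropriate) and passing to the two-scale limit, the linearity of the equations identifies
\[
\bu_1 = \sum_{i,j} \be(\bu)_{ij}\,\bw^{ij}, \quad \p_1 = \sum_{j} \partial_{x_j}\p \,\bv_p^j, \quad \n_1 = \sum_{j}\partial_{x_j}\n\,\bv_n^j, \quad b_1 = \sum_{j}\partial_{x_j}b\,v_b^j,
\]
where $\bw^{ij}, \bv_p^j, \bv_n^j, v_b^j$ solve the cell problems \eqref{unit_n}--\eqref{unit_u}. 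Averaging over $\hat Y$ (resp.\ $\hat Y_M$) recovers the homogenised tensors $\bbE_{\rm hom}$, $\mathcal D_p$, $\mathcal D_n$, $\mathcal D_b$ of \eqref{macro_coef}. The linear reaction-diffusion terms and the boundary integrals on $\Gamma_{\cE,T}$, $\Gamma_{\cI,T}$, $\Gamma_{\cU,T}$ pass to the limit via the two-scale boundary unfolding, producing the prefactor $\theta_M^{-1}$ in \eqref{macro_bc_1}, \eqref{macro_bc_2}; the nonlocal term $\mathcal N_\delta(\be(\bu^\ve))$, being an integral over a fixed macroscopic ball, converges to $\mathcal N^{\rm eff}_\delta(\be(\bu))$ as soon as $\be(\bu^\ve)$ converges weakly two-scale to $\be(\bu)+\hat\be_y(\bu_1)$, since the $y$-average of $\bbE\,\hat\be_y(\bu_1)$ combined with $\bbE\,\be(\bu)$ produces $\bbE_{\rm hom}(b)\be(\bu)$.

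The main obstacle is passing to the limit in $\Q_n(\n^\ve,b^\ve,\be(\bu^\ve))$ and $Q_b(\n^\ve,b^\ve,\be(\bu^\ve))$: by Assumption~\ref{assumptions}.11 these terms are genuinely nonlinear and Lipschitz in $\be(\bu^\ve)$, while $\be(\bu^\ve)$ is only weakly two-scale convergent a priori. I plan to overcome this by a convergence-of-energies argument for the elasticity system: testing \eqref{weak_sol_u} by $\bu^\ve$ itself gives
\[
\int_{\Omega_T}\bbE^\ve(b^\ve,x)\be(\bu^\ve):\be(\bu^\ve)\,dx\,dt = -\langle p_\cI\,\bnu,\bu^\ve\rangle_{\Gamma_{\cI,T}} + \langle \bff,\bu^\ve\rangle_{\Gamma_{\cE\cU,T}},
\]
whose right-hand side converges to its macroscopic analogue by trace continuity. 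Because $b^\ve$ converges strongly two-scale and $\bbE_M$ is $C^1$ (Assumption~\ref{assumptions}.10), the coefficient $\bbE^\ve(b^\ve,\hat x/\ve)$ converges strongly two-scale to $\bbE(b,y)$. Coercivity together with the expansion of $\|\be(\bu^\ve)-\be(\bu)-\hat\be_y(\bu_1)\|_{\bbE^\ve}^2$ then yields the strong two-scale convergence of $\be(\bu^\ve)$. Armed with this, the continuity of $\Q_n,Q_b$ allows one to pass to the limit and identify the cell-averaged reactions $\Q_n^{\rm eff},Q_b^{\rm eff}$ of the theorem, completing the derivation of \eqref{macro_1}, \eqref{macro_2}, \eqref{macro_bc_1}, \eqref{macro_bc_2}. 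Uniqueness of the macroscopic solution follows from a contraction inequality analogous to the one used in the proof of Theorem~\ref{existence_3}, since the macroscopic coefficients inherit coercivity and the relevant Lipschitz regularity from their microscopic counterparts.
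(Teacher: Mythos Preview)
Your proposal is correct and follows essentially the same route as the paper: compactness from the estimates of Theorem~\ref{existence_3}, strong convergence of $\p^\ve,\n^\ve,b^\ve$ via the time-shift bound and Kolmogorov--Riesz, identification of the correctors through the cell problems \eqref{unit_n}--\eqref{unit_u}, and---the crucial step---an energy-convergence argument in \eqref{weak_sol_u} to upgrade $\be(\bu^\ve)$ to strong two-scale convergence, which then carries the nonlinear terms $\Q_n,Q_b$ through the limit. The paper makes this last step explicit by passing to the unfolded sequence $\cT_\ve(\be(\bu^\ve))$ and showing $\cT_\ve(\be(\bu^\ve))\to \be(\bu)+\hat\be_y(\hat\bu)$ strongly in $L^2(\Omega_T\times\hat Y)$, then using the Lipschitz bounds of Assumption~\ref{assumptions}.11 to get weak $L^2$ convergence of $\cT_\ve(\Q_n(\ldots))$ and $\cT_\ve(Q_b(\ldots))$; your sketch is consistent with this.
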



\section{\textit{A priori} estimates and  the existence and uniqueness  results  for  the microscopic Model I.}\label{exist_uniq_ModelI}
In this section we analyse  Model I, i.e.\  equations \eqref{sumbal}--\eqref{sumbal2}.   We split the proof of the existence and uniqueness results  into three steps.  First we show that for a given non-negative $b^\ve$ the   equations of linear elasticity  have a uniques solution.   Next we prove the well-posedness  of the problem \eqref{sumbal}--\eqref{BC1} for a given $\bu^\ve$. 
Finally,  showing a contraction inequality for $b^\ve$ in $L^\infty(0, T; L^\infty(\Omega_{M}^\ve))$ and applying the Banach fixed-point theorem,  we conclude  that there exists a unique  weak solution of the coupled system. 

\subsection{Existence and uniqueness of a weak solution $\bu^\ve$ of the problem \eqref{sumbal2} for a given $b^\ve$.}
\begin{lemma}\label{lemub}
Let $b^{\ve,1}, b^{\ve_,2}\in L^\infty(0,T; L^\infty(\Omega_{M}^\ve))$  be given  with  $b^{\ve,1}(t,x), b^{\ve, 2}(t,x)\geq 0$ for a.a.~$(t,x) \in \Omega_{M,T}^\ve$. Then there exist $\bu^{\ve,j}\in L^\infty(0,T;\cW(\Omega))$, with $j=1,2$, satisfying
\beqn\label{simpsysEU2}
\left\{
\begin{aligned}
{\rm div}\, (\bbE^\ve(b^{\ve,j},x)\be(\bu^{\ve,j}))&={\bf 0}&& {\rm in}\ (0,T)\times\Omega,\\
(\bbE^\ve(b^{\ve, j},x)\be(\bu^{\ve,j}))\bnu&=-p_\cI\bnu \quad && {\rm on}\ (0,T)\times\Gamma_\cI,\\
(\bbE^\ve(b^{\ve, j},x)\be(\bu^{\ve,j}))\bnu&=\bff && {\rm on}\ (0,T)\times(\Gamma_\cE\cup \Gamma_{\cU}),
\end{aligned}
\right.
\eeqn
and the  estimates
\begin{eqnarray}
&& \| \bu^{\ve, j}\|_{L^\infty(0,T;\cW(\Omega))} \leq C_1,  \qquad \qquad \qquad j = 1,2, \label{estim_u_21}
\\ 
&& \|\be(\bu^{\ve, 1}-\bu^{\ve, 2})\|_{L^\infty(0,T; L^2(\Omega))}\leq C_2\|b^{\ve, 1}-b^{\ve, 2}\|_{L^\infty(0,T; L^\infty(\Omega_M^\ve))}, \label{ub}
\end{eqnarray}
where the constants $C_1$ and $C_2$ are  independent of  $\ve$ and $b^{\ve, j}$, with  $j=1,2$.  
\end{lemma}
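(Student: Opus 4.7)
The plan is to proceed in three steps: obtain existence and uniqueness of $\bu^{\ve,j}(t)$ pointwise in time via Lax--Milgram, derive the energy estimate \eqref{estim_u_21}, and then establish the Lipschitz-type comparison \eqref{ub} by subtracting the two equations.

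For a.e.\ $t \in (0,T)$, I would view \eqref{simpsysEU2} as a linear elliptic problem on $\Omega$. The bilinear form
\[
a_t(\bu, \boldsymbol{\psi}) := \langle \bbE^\ve(b^{\ve,j}(t,\cdot),\cdot)\be(\bu),\, \be(\boldsymbol{\psi})\rangle_\Omega
\]
is symmetric, and by Assumption~\ref{assumptions}.10 it is bounded (with bound $\gamma_M + |\bbE_F|$) and satisfies $a_t(\bu,\bu) \geq \omega_E \|\be(\bu)\|^2_{L^2(\Omega)}$; together with Korn's inequality on $\cW(\Omega)$ (which the excerpt has just verified), this yields coercivity in the $\cW(\Omega)$-norm. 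The right-hand side defines a continuous linear functional on $\cW(\Omega)$ by the trace theorem, using $p_\cI(t) \in L^2(\Gamma_\cI)$ and $\bff(t) \in L^2(\Gamma_\cE \cup \Gamma_\cU)$. Lax--Milgram then gives a unique $\bu^{\ve,j}(t) \in \cW(\Omega)$, and measurability in $t$ follows from continuous dependence on the data. Testing \eqref{weak_sol_u} (pointwise in $t$) with $\boldsymbol{\psi} = \bu^{\ve,j}(t)$ and using coercivity, the trace inequality and Young's inequality yields
\[
\omega_E \|\be(\bu^{\ve,j}(t))\|^2_{L^2(\Omega)} \leq C\bigl(\|p_\cI(t)\|_{L^2(\Gamma_\cI)} + \|\bff(t)\|_{L^2(\Gamma_\cE\cup \Gamma_\cU)}\bigr) \|\be(\bu^{\ve,j}(t))\|_{L^2(\Omega)},
\]
from which \eqref{estim_u_21} follows by taking the essential supremum over $t$, with a constant $C_1$ independent of $\ve$ and of $b^{\ve,j}$.

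To derive \eqref{ub}, I would subtract the two weak formulations to see that $\bw := \bu^{\ve,1} - \bu^{\ve,2}$ satisfies
\[
\langle \bbE^\ve(b^{\ve,1},\cdot)\be(\bw),\, \be(\boldsymbol{\psi})\rangle_\Omega = -\langle (\bbE^\ve(b^{\ve,1},\cdot) - \bbE^\ve(b^{\ve,2},\cdot))\be(\bu^{\ve,2}),\, \be(\boldsymbol{\psi})\rangle_\Omega
\]
for all $\boldsymbol{\psi} \in \cW(\Omega)$. The key observation is that the tensor difference on the right-hand side vanishes on $\Omega_F^\ve$ (since $\bbE_F$ is constant), and on $\Omega_M^\ve$ the $C^1$-regularity of $\mathbb{E}_M$ from Assumption~\ref{assumptions}.10, combined with the given $L^\infty$ bounds on $b^{\ve,1}, b^{\ve,2} \geq 0$, supplies a Lipschitz estimate $|\mathbb{E}_M(b^{\ve,1}) - \mathbb{E}_M(b^{\ve,2})| \leq L\, |b^{\ve,1} - b^{\ve,2}|$ with $L$ depending only on the $L^\infty$-norms of $b^{\ve,1}, b^{\ve,2}$. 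Choosing $\boldsymbol{\psi} = \bw(t)$ and applying Cauchy--Schwarz yields
\[
\omega_E \|\be(\bw)(t)\|^2_{L^2(\Omega)} \leq L\, \|b^{\ve,1}-b^{\ve,2}\|_{L^\infty(0,T;L^\infty(\Omega_M^\ve))} \|\be(\bu^{\ve,2})(t)\|_{L^2(\Omega)} \|\be(\bw)(t)\|_{L^2(\Omega)},
\]
and dividing by $\|\be(\bw)(t)\|_{L^2(\Omega)}$ and invoking \eqref{estim_u_21} for $\bu^{\ve,2}$ gives \eqref{ub} after taking the essential supremum in $t$.

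The argument is essentially routine; the only issue requiring care is bookkeeping, namely verifying that the constants in coercivity, trace, and Korn are $\ve$-independent. This holds because the ambient domain $\Omega$ and the surfaces $\Gamma_\cI, \Gamma_\cE, \Gamma_\cU$ do not depend on $\ve$, and because $\omega_E$ and $\gamma_M$ in Assumption~\ref{assumptions}.10 are uniform. The Lipschitz constant $L$ depends on the $L^\infty$ bounds of $b^{\ve,j}$ but not on $\ve$ directly, so $C_2$ in \eqref{ub} can be chosen independent of $\ve$ whenever the $L^\infty$-norms of $b^{\ve,j}$ are controlled uniformly in $\ve$, which is what will be required when this lemma is combined with the forthcoming $L^\infty$-bounds on $b^\ve$.
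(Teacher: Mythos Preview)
Your proof is correct and follows essentially the same approach as the paper: Lax--Milgram for existence, testing with $\bu^{\ve,j}$ plus Korn and trace for the a priori bound \eqref{estim_u_21}, and subtracting the two weak formulations and testing with the difference to obtain \eqref{ub} via the Lipschitz property of $\mathbb E_M$. Your final remark that the Lipschitz constant $L$ depends on the $L^\infty$-bounds of $b^{\ve,j}$ is accurate and matches how the paper actually argues (it too invokes ``boundedness of $b^{\ve,j}$ in $L^\infty(0,T;L^\infty(\Omega_M^\ve))$''), so the independence of $C_2$ from $b^{\ve,j}$ should be read as independence modulo a fixed $L^\infty$-bound.
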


\begin{proof}  Due to the assumptions on $\mathbb E^\ve$, see Assumption~\ref{assumptions}.10, the solutions $\bu^{\ve, j}$ of \eqref{simpsysEU2} exist by the Lax--Milgram Theorem.
Taking $\bu^{\ve, j}$ as a test function in the weak formulation of \eqref{simpsysEU2} and using the properties of  $\mathbb E^\ve$ and the non-negativity  of $b^{\ve, j}$  we obtain
\begin{equation*}
\omega_E \|\be (\bu^{\ve,j}(t))\|_{L^2(\Omega)} \leq \sigma \big[\|\bu^{\ve, j}(t)\|_{L^2(\Gamma_{\mathcal I})} + \|\bu^{\ve, j}(t)\|_{L^2(\Gamma_{\mathcal E}\cup \Gamma_\cU)}  \big]
 + C_\sigma\big[\|\mathbf{f} (t)\|_{L^2(\Gamma_{\mathcal E}\cup \Gamma_\cU)} + \|p_{\mathcal I}(t)\|_{L^2(\Gamma_{\mathcal I})}\big]
\end{equation*}
for a.a.\ $t \in (0,T)$, where $\sigma >0$ is arbitrary and $C_\sigma$ is independent of $\ve$.
Applying the second Korn inequality for $\bu^{\ve, j} \in L^\infty(0,T; \mathcal  W(\Omega))$ and the trace estimate in $H^1(\Omega)$, and choosing $\sigma>0$ sufficiently small  yield the estimate~\eqref{estim_u_21}.

Taking $\bu^{\ve, 1}-\bu^{\ve,2}$ as a test function in the weak formulation  of \eqref{simpsysEU2} for $j=1,2$ and subtracting the resulting equations  imply
\begin{equation*}
\big\langle \bbE^\ve(b^{\ve, 1},x)\, \be(\bu^{\ve,1}-\bu^{\ve, 2}), \be(\bu^{\ve, 1}-\bu^{\ve,2})\big\rangle_{\Omega}=\big\langle\big(\bbE^\ve(b^{\ve,1},x)-\bbE^\ve(b^{\ve, 2},x)\big)\, \be(\bu^{\ve, 2}), \be(\bu^{\ve, 1}-\bu^{\ve, 2})\big\rangle_{\Omega}.
\end{equation*}
Then, using the positive definiteness  and regularity of $\bbE^\ve(b^{\ve,1},x)$ together with   the boundedness of $\be(\bu^{\ve,2})$ in $L^\infty(0,T; L^2(\Omega))$ and of  $b^{\ve, j}$ in $L^\infty(0,T; L^\infty(\Omega_{M}^\ve))$, where $j=1,2$,    we obtain   the inequality~\eqref{ub}.
\end{proof}

\subsection{Existence and uniqueness of a weak solution of {\rm \eqref{sumbal}--\eqref{BC1}} for a given $\bu^\ve$. }
In this subsection we prove that  for a given  $\bu^\ve$ the  system  {\rm \eqref{sumbal}--\eqref{BC1}}   has a unique weak solution. 
In the derivation of the  {\it a priori} estimates, uniform in $\ve$,  we shall use the properties of an  extension of $\p^\ve$ and  $\n^\ve$  from a connected  perforated domain $\Omega_M^\ve$ to $\Omega$. 
Using classical extension results \cite{Acerbi,CiorPaulin99}, we obtain the following lemma.
\begin{lemma}\label{lem:extension}
There exists an extension $\overline v^{\varepsilon}$ of $v^{\varepsilon}$ from
$W^{1,p}(\Omega_M^{\varepsilon})$ into $W^{1,p}(\Omega)$, with  $1\leq  p< \infty$, such that 
\[
\|\overline v^{\varepsilon}\|_{L^{p}(\Omega)}\leq\mu_1\|v^{\varepsilon}\|_{L^{p}(\Omega_M^{\varepsilon})} \; \text{ and } \; 
\|\nabla\overline v^{\varepsilon}\|_{L^{p}(\Omega)}\leq\mu_1\|\nabla v^{\varepsilon}\|_{L^{p}(\Omega_M^{\varepsilon})},
\]
where   the constant $\mu_1$ depends only on $Y$ and $Y_M$, and $\Omega_M^\ve$ is connected, with perforations (microfibrils)
 having empty intersection with $\partial \Omega$ or near $\partial \Omega$  microfibrils are perpendicular to some parts of $\partial \Omega$. See Section~\ref{section:model}  for the description of the microscopic structure  of $\Omega_{M}^\ve$. 
\end{lemma}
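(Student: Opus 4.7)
The plan is to build the extension in three stages following the classical construction of Acerbi--Chiad\`o Piat--Dal Maso--Percivale, adapted to the specific geometry of $\Omega_M^\ve$. First I would work on the reference cell $\hat Y$. Since $\overline{\hat Y_F}\subset \hat Y$ and $\hat Y_F$ has Lipschitz boundary, standard Sobolev extension theory produces a bounded linear operator $P:W^{1,p}(\hat Y_M)\to W^{1,p}(\hat Y)$. The critical refinement needed here is the gradient-controls-gradient estimate
\[
\|Pv\|_{L^p(\hat Y)}\le c\|v\|_{L^p(\hat Y_M)},\qquad \|\nabla Pv\|_{L^p(\hat Y)}\le c\|\nabla v\|_{L^p(\hat Y_M)},
\]
with $c$ depending only on $\hat Y$ and $\hat Y_M$. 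This is exactly the content of the Acerbi et al.\ construction: one avoids cutoff-based extensions, which would produce a zero-order term $\|v\|_{L^p(\hat Y_M)}$ on the right-hand side of the gradient estimate, by a Poincar\'e-style correction that subtracts the mean of $v$ over a neighborhood of $\hat Y_F$ before extending.

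Second, I would transport this to the $\ve$-scale. Since the microfibrils are aligned with the $x_3$-axis, the geometry is a product: $Y_M=\hat Y_M\times(0,a_3)$ and the cells $\ve(\hat Y+\xi)\times(0,a_3)$ tile the interior of $\Omega$. For each index $\xi$ such that $\ve(\hat Y+\xi)\times(0,a_3)\subset\Omega$, define the scaled operator $P_\ve$ by $(P_\ve v)(x)=(P(v\circ T_{\ve,\xi}))(T_{\ve,\xi}^{-1}x)$, where $T_{\ve,\xi}$ is the affine map sending $\hat Y$ onto $\ve(\hat Y+\xi)$ acting only on $\hat x=(x_1,x_2)$. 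Changing variables, both inequalities scale homogeneously: the $L^p$ estimate picks up a common factor $\ve^{2/p}$ on each side, and the gradient estimate picks up $\ve^{2/p-1}$ on each side. Thus the cell-wise estimate holds with the same constant $c$, and summing over all interior cells gives an extension on the union of complete cells with constant $\mu_1=c$.

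Third, I would verify that the pieces glue to a function in $W^{1,p}(\Omega)$ and that boundary cells are handled. Since $\Omega_M^\ve$ is connected and contains the shared faces between adjacent $\ve$-cells, the traces of $v^\ve$ on those faces coincide from both sides, and $P$ respects boundary values on $\partial\hat Y$; hence the piecewise extension has no jumps across interior cell interfaces and lies in $W^{1,p}$. For cells that meet $\partial\Omega$, the hypothesis stated in the lemma, namely that microfibrils either do not intersect $\partial\Omega$ or meet it perpendicularly on the flat faces $\Gamma_\cI,\Gamma_\cE,\Gamma_\cU$, is precisely what permits a reflection across these flat faces: the reflected cell is a complete cell of the same periodicity pattern, so applying $P$ to the reflected data and restricting yields a valid extension on the boundary cells with the same constant.

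The main obstacle is the gradient-only bound in the first stage, which is the substantive input; the scaling and gluing are routine bookkeeping, and the perpendicularity hypothesis on the microfibrils near $\partial\Omega$ is precisely tailored to make the reflection step in the third stage work without any loss in $\mu_1$. I would therefore present the proof essentially as a reduction to the reference-cell extension of Acerbi et al.\ \cite{Acerbi} and Cioranescu--Saint Jean Paulin \cite{CiorPaulin99}, with the scaling and gluing stated explicitly.
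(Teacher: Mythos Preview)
Your proposal is correct and follows exactly the paper's approach: the paper simply invokes the classical extension results of Acerbi et al.\ and Cioranescu--Saint Jean Paulin for the interior cells and, in the remark following the lemma, the same reflection argument across flat boundary faces that you describe. One small geometric slip: in this particular domain the microfibrils do not touch $\Gamma_\cI,\Gamma_\cE,\Gamma_\cU$ at all (those cells contain no fibrils by construction), and it is the $x_3$-faces $\partial\Omega\setminus(\Gamma_\cI\cup\Gamma_\cE\cup\Gamma_\cU)$ that the fibrils meet perpendicularly and across which the reflection is performed.
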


\noindent\textbf{Remark. } Notice that the microfibrils do not intersect the boundaries $\Gamma_\cI$, $\Gamma_{\cU}$,  and $\Gamma_{\cE}$, and near the boundaries $(\partial \Omega \setminus(\Gamma_{\cI} \cup \Gamma_{\cE}\cup \Gamma_{\cU}))$ it is sufficient to extend $\p^\ve$ and $\n^\ve$ by reflection in the directions parallel to the corresponding  boundary. Thus, classical extension results \cite{CiorPaulin99,Ptashnyk} apply.

For $v^{\varepsilon} \in L^p(0,T; W^{1,p}(\Omega_M^{\varepsilon}))\cap W^{1,p}(0,T; L^p(\Omega_M^{\varepsilon}))$, define $\hat v^{\varepsilon}(\cdot,t)=\overline v^{\varepsilon}(\cdot,t)$ almost everywhere in $(0,T)$.  Since the extension operator is linear and bounded  and $\Omega_M^{\varepsilon}$ does not depend on $t$, we have  $\hat v^{\varepsilon} \in L^p(0,T; W^{1,p}(\Omega))\cap W^{1,p}(0,T; L^p(\Omega))$ and 
\[
\|{\hat v}^{\varepsilon}\|_{L^{p}(\Omega_T)}
\leq\mu_1\| v^{\varepsilon}\|_{L^{p}(\Omega_{M,T}^{\varepsilon})},   \; \; 
\|\partial_{t}{\hat v}^{\varepsilon}\|_{L^{p}(\Omega_T)}
\leq\mu_1\|\partial_{t} v^{\varepsilon}\|_{L^{p}(\Omega_{M,T}^{\varepsilon})},  \;   \;  
\|\nabla{\hat v}^{\varepsilon}\|_{L^{p}(\Omega_T)}
\leq\mu_1\|\nabla v^{\varepsilon}\|_{L^{p}(\Omega_{M,T}^{\varepsilon})}.
\]
 In the sequel, we shall identify $\p^{\varepsilon}$ and $\n^\ve$ with their extensions.
\begin{theorem}\label{th:exist_1}
Under Assumption~\ref{assumptions} and for $\bu^\ve \in L^\infty(0,T; \cW(\Omega))$ such that 
\begin{equation}\label{estim_u_1}
\| \bu^\ve\|_{L^\infty(0,T;\cW(\Omega))} \leq C,
\end{equation}
where the constant $C$ is  independent of  $\ve$, 
 there exists a unique non-negative  weak solution $(\p^\ve, \n^\ve, b^\ve)$ of the microscopic problem  {\rm \eqref{sumbal}--\eqref{BC1}} satisfying 
 the \textit{a priori} estimates
\begin{equation}\label{apriori_estim}
\begin{aligned}
\|\p^\ve\|_{L^\infty(0,T; L^\infty(\Omega_{M}^\ve))} + \|\nabla \p^\ve\|_{L^2( \Omega_{M,T}^\ve)} +\|\n^\ve\|_{L^\infty(0,T; L^\infty(\Omega_{M}^\ve))} + \|\nabla \n^\ve\|_{L^2( \Omega_{M,T}^\ve)} \leq C, \\
   \| b^\ve \|_{W^{1,\infty}(0,T; L^\infty(\Omega_{M}^\ve))}\leq C,
   \end{aligned}
\end{equation}
where the  constant $C$ is independent of $\ve$.
\end{theorem}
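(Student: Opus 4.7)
\bigskip
\noindent\emph{Proof proposal for Theorem~\ref{th:exist_1}.}

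The plan is to decouple the system into a sub-problem for $\p^\ve$, a sub-problem for $(\n^\ve,b^\ve)$ coupled through the ODE, and then reassemble via a fixed-point argument. Observe first that since $\be(\bu^\ve)\in L^\infty(0,T;L^2(\Omega))$ and $\delta>0$ is fixed, the averaging kernel in $\mathcal N_\delta(\be(\bu^\ve))$ yields
\[
\|\mathcal N_\delta(\be(\bu^\ve))\|_{L^\infty(\Omega_T)}\leq C(\delta)\|\bbE^\ve(b^\ve,\cdot)\|_{L^\infty}\,\|\be(\bu^\ve)\|_{L^\infty(0,T;L^2(\Omega))},
\]
so, thanks to Assumption~\ref{assumptions}.10 and \eqref{estim_u_1}, the forcing entering the reaction/boundary terms is \emph{bounded} pointwise a.e.\ in $\Omega_T$, uniformly in $\ve$ and independently of $b^\ve$. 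This is the decisive structural fact that makes all subsequent arguments work.

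First I would address the sub-problem \eqref{sumbal} for $\p^\ve$, which does not involve $\n^\ve$, $b^\ve$ or $\bu^\ve$ in the reaction terms (only through the boundary datum $\J_p$), and construct a solution by Galerkin approximation on a basis of $\cV(\Omega_M^\ve)$, using Assumption~\ref{assumptions}.2--3 to obtain energy estimates uniform in the Galerkin dimension and extracting weak/weak-$*$ limits. Non-negativity follows by testing with $-(\p^\ve)^-$ and exploiting $F_{p,1}(0,\eta)=0$, $F_{p,2}(\xi,0)=0$, $J_{p,1}(0,\eta)\geq 0$, $J_{p,2}(\xi,0)\geq 0$. Uniform $L^\infty$-bounds are obtained by the Alikakos bootstrap: one tests with $(\p^\ve)^{2^k-1}$, uses the trace inequality on $\Gamma_\cI$ with the sublinearity $|J_{p,1}|\leq\gamma_J(1+\xi)$ and $|J_{p,2}|\leq g(\xi)(1+\eta)$, and iterates the resulting $L^{2^k}$-estimate with constants independent of $\ve$ (this is where the extension operator of Lemma~\ref{lem:extension} enters, so that $\ve$-independent Sobolev and trace inequalities can be applied on $\Omega$ rather than $\Omega_M^\ve$).

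Having $\p^\ve$ in hand, I would treat the $(\n^\ve,b^\ve)$ system for a fixed $\bu^\ve$ by a Schauder fixed-point argument. Given $\tilde\n^\ve$ in a bounded non-negative ball of $L^2(\Omega_{M,T}^\ve)^2$, the ODE
\[
\partial_t b^\ve=R_b(\tilde\n^\ve,b^\ve,\mathcal N_\delta(\be(\bu^\ve))),\qquad b^\ve(0,\cdot)=b_0,
\]
has, by the Lipschitz estimate built into Assumption~\ref{assumptions}.5 together with $\mathcal N_\delta(\be(\bu^\ve))\in L^\infty$, a unique solution $b^\ve\in W^{1,\infty}(0,T;L^\infty(\Omega_M^\ve))$; Gr\"onwall applied pointwise in $x$ to the ODE, using $|R_b|\leq\beta_3(1+|\boldsymbol\xi|+\eta)(1+\zeta)$ and non-negativity from $R_b(\boldsymbol\xi,0,\zeta)\geq 0$, gives the bound claimed in \eqref{apriori_estim}. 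With this $b^\ve$, the linear reaction-diffusion system for $\n^\ve$ is solved by Galerkin, where again Assumption~\ref{assumptions}.4,6,7 together with the $L^\infty$-bound on $\mathcal N_\delta(\be(\bu^\ve))$ deliver energy estimates. This defines a map $\tilde\n^\ve\mapsto\n^\ve$; compactness in $L^2(\Omega_{M,T}^\ve)^2$ follows from Aubin--Lions once one establishes the uniform bound on $\partial_t\n^\ve$ in $L^2(0,T;\cV(\Omega_M^\ve)')^2$, and continuity of the map is standard from the Lipschitz structure. Schauder's theorem provides a fixed point, and another Alikakos iteration (now on $\n^\ve$, using the $L^\infty$-bound of $b^\ve$ already established and the sub-linear growth in Assumption~\ref{assumptions}.5,6) upgrades $\n^\ve$ to $L^\infty$, and then the pointwise ODE bound refines $b^\ve$ accordingly.

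The remaining and hardest step is uniqueness. Given two solutions, I would subtract the ODEs and use Gr\"onwall pointwise to get $\|b^{\ve,1}-b^{\ve,2}\|_{L^\infty(0,T;L^\infty(\Omega_M^\ve))}$ controlled by $\|\n^{\ve,1}-\n^{\ve,2}\|_{L^\infty(0,T;L^\infty(\Omega_M^\ve))}$, which on its own is too weak since we only control $\n^\ve$ in $L^2$ differences. This is why I would run the Alikakos iteration on the \emph{difference} of the $\n^\ve$ equations: testing with $|\n^{\ve,1}-\n^{\ve,2}|^{2^k-2}(\n^{\ve,1}-\n^{\ve,2})$, using the continuous differentiability of $\F_n,\R_n,\J_n,\G$ on $\mathcal I_\mu^\cdot$ together with the uniform $L^\infty$-bounds on $(\p^\ve,\n^\ve,b^\ve)$ and $\mathcal N_\delta(\be(\bu^\ve))$ to linearise the nonlinearities, and iterating in $k$ to pass from $L^2$ to $L^\infty$ in the spatial variable. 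Coupling this with the Gr\"onwall estimate for $b^\ve$ closes a self-improving inequality for the $L^\infty$-norm of the difference, which via the standard Gr\"onwall lemma in $t$ forces both differences to vanish. This is the step I expect to be technically most delicate, because the iteration must be carried out with constants independent of $k$ over the finite time interval $(0,T)$; it is precisely the mechanism alluded to in the introduction where ``the Alikakos iteration technique is also used to derive a contraction inequality''.
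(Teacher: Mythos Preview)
Your overall strategy---decouple into a $\p^\ve$ sub-problem and an $(\n^\ve,b^\ve)$ sub-problem, solve each via Galerkin plus a fixed-point, and derive $L^\infty$-bounds by iteration---is the same as the paper's. Two technical choices differ: for non-negativity the paper invokes the invariant-region theorem of Redlinger rather than testing with the negative part, and for the $L^\infty$-bound on $\p^\ve$ the paper uses a comparison with an auxiliary linear problem $\Phi^\ve_\beta$ (testing with $(\p^\ve_j-\Phi^\ve_\beta-A)^+$) rather than Alikakos. Both of your alternatives are valid here, since $F_{p,1}\geq 0$ and $|J_{p,1}|\leq \gamma_J(1+\xi)$ uniformly in the second argument make the $\p^\ve_1$ iteration close without any control on $\p^\ve_2$, after which $\p^\ve_2$ follows.

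There is one genuine inconsistency in your $(\n^\ve,b^\ve)$ scheme. If you run the Schauder argument with $\tilde\n^\ve$ in a ball of $L^2(\Omega^\ve_{M,T})^2$ only, the pointwise-in-$x$ Gr\"onwall for the ODE does \emph{not} give $b^\ve\in W^{1,\infty}(0,T;L^\infty(\Omega_M^\ve))$: the right-hand side $|R_b|\leq \beta_3(1+|\tilde\n^\ve|+b^\ve)(1+\zeta)$ is then only $L^2$ in $x$. The paper avoids this by taking the fixed-point space to include an $L^\infty$-bound on $\tilde\n^\ve_2$ (namely $X=L^2(0,T;H^\varsigma(\Omega_M^\ve))\cap L^\infty(\Omega^\ve_{M,T})$ with $\tilde\n^\ve_2\geq 0$), and then obtains the $L^\infty$-estimates for $\n^\ve$ and $b^\ve$ \emph{simultaneously}: Alikakos on $\n^\ve$ yields $\|\n^\ve\|_{L^\infty(0,\tau;L^\infty)}^2\leq C(1+\|b^\ve\|_{L^\infty(0,\tau;L^\infty)}^2)$, the ODE yields $\|b^\ve(\tau)\|_{L^\infty}^2\leq \|b_0\|_{L^\infty}^2+C\tau(1+\|\n^\ve\|_{L^\infty(0,\tau;L^\infty)}^2+\|b^\ve\|_{L^\infty(0,\tau;L^\infty)}^2)$, and one iterates over short time intervals. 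Your sequential description (``$L^\infty$-bound of $b^\ve$ already established'' before bounding $\n^\ve$) hides this circularity.

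Finally, your uniqueness argument is over-engineered for \emph{this} theorem. With $\bu^\ve$ fixed, $\mathcal N_\delta(\be(\bu^\ve))$ is a fixed $L^\infty$ datum, so after subtracting and using the uniform $L^\infty$-bounds already obtained, all nonlinearities are globally Lipschitz and a plain $L^2$ Gr\"onwall (energy estimate for $\n^{\ve,1}-\n^{\ve,2}$ coupled with the $L^2$-integrated ODE estimate for $b^{\ve,1}-b^{\ve,2}$) closes; this is exactly what the paper does in one sentence at the end of the proof. The Alikakos iteration on \emph{differences} that you describe is precisely the mechanism needed later for the coupled system in Theorem~\ref{existence_2}, where $\bbE^\ve$ depends on $b^\ve$ and one must control $\|b^{\ve,j}-b^{\ve,j+1}\|_{L^\infty}$ in terms of $\|\be(\bu^{\ve,j}-\bu^{\ve,j+1})\|_{L^2}$; it is not required here.
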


\begin{proof} To show the existence of a  solution of  {\rm \eqref{sumbal}--\eqref{BC1}} for a given $\mathbf u^\ve \in L^\infty(0,T; \cW(\Omega))$, we  shall apply the Schauder and Schaefer  fixed-point theorems and the Galerkin method. 
First we consider the subsystem for $\p^\ve$. 

 For  $ \widetilde \p^\ve_2 \in  L^2(0, T; H^\varsigma(\Omega_{M}^\ve))$  with
 $0 \leq \widetilde \p^\ve_2(t,x)\leq A$
  for  $(t,x)\in \Omega_{M,T}^\ve$ and some constant $A>0$, and $\varsigma \in ( 1/2, 1)$, we consider 
\begin{equation}\label{linearised_1}
\begin{cases}
\partial_t \p^\ve =\text{div}(D_p\nabla \p^\ve) -  \widetilde \F_{p}(\p^\ve) & \text{ in } \Omega_{M,T}^\ve,\\
D_p\nabla \p^\ve\,\bnu =  \widetilde \J_p(\p^\ve) \qquad  \qquad \qquad  \text{ on }   \Gamma_{\cI, T},    \qquad 
D_p\nabla \p^\ve\,\bnu = - \gamma_p\,  \p^\ve \quad   &\text{ on }  \Gamma_{\cE, T},    \\
   \p^\ve \hspace{3.6 cm }  a_3\text{-periodic in } x_3, \quad \quad    D_p \nabla\p^\ve\,\bnu = 0  \quad &   \text{ on }  \Gamma_{\cU,T}\cup  \Gamma^\ve_{T}, \\
\p^\ve(0,x) = \p_{0}(x)  \quad  &   \text{ in } \Omega_{M}^\ve,
\end{cases}
\end{equation}
where $\widetilde \F_{p}(\p^\ve) = (F_{p,1}(\p^\ve_1, \widetilde \p^\ve_2),  F_{p,2}(\p^\ve))^T$
and $\widetilde \J_p(\p^\ve)= ( J_{p,1}(\p^\ve_1,  \widetilde \p^\ve_2), J_{p,2}(\p^\ve))^T$.
Applying the Galerkin method and using  the   estimates similar to   \eqref{energy_neE_1} and \eqref{energy_neE_2}, together with  the boundedness of $\p^\ve_1$ when considering  the problem for $\p^\ve_2$,  we obtain the  existence of a unique weak solution of the problem  \eqref{linearised_1}.

First, we use the theory of positive invariant regions to  show the non-negativity of the solutions of  \eqref{linearised_1}. 
The assumptions on $\F_{p}$ and $\J_p$ ensure  
\begin{eqnarray*}
 F_{p,1}(0,  \widetilde \p^\ve_{2})= 0,   \quad  &&
 J_{p,1}(0,  \widetilde \p^\ve_2)\geq 0,  \; \; \; \qquad  \text{ for all } \;  \widetilde \p^\ve_{2} \geq 0,\\
  F_{p,2}(\p^\ve_1, 0)= 0,   \quad &&
 J_{p,2}(\p^\ve_1,0)\geq 0,  \; \; \; \qquad  \text{ for all } \;   \p^\ve_{1} \geq 0
\end{eqnarray*} 
and  $\F_p$, $\J_p$ are Lipschitz continuous  in $(-\mu, M)^2$ for some $\mu>0$ and any $0< M < +\infty$.  Thus,   the non-negativity of the initial conditions  $\p_{0,1}$ and $\p_{0,2}$  and the Theorem on positive invariant regions    \cite[Theorem 2]{Redlinger},  with $K_1(\p^\ve)= - \p^\ve_1$ and $K_2(\p^\ve)= - \p^\ve_2$,   imply   $\p^\ve_{j}(t,x) \geq 0$ for  $(t,x) \in \Omega_{M,T}^\ve$ and $j = 1,2$. 

Considering   $\p^\ve_1$  as a test function in the  weak formulation of the equation for $\p^\ve_1$ in   \eqref{linearised_1}  and  using the non-negativity of  
$\p^\ve_1$ and $\widetilde \p^\ve_2$, along with the assumptions on $\J_p$ and $\F_{p}$,  we obtain  the  estimate 
\begin{equation}\label{energy_neE_1}
\|\p^\ve_1\|_{L^\infty(0,T; L^2(\Omega_M^\ve))} + \|\p^\ve_1\|_{L^2(0,T; H^1(\Omega_M^\ve))}
\leq C, 
\end{equation}
 where  the constant $C$ is independent of $\ve$.  
The  estimates for the  boundary terms 
 are obtained by using  the  extension of $\p^\ve_1$ from $\Omega^\ve_M$ to $\Omega$,  see  Lemma~\ref{lem:extension}, and the trace inequality
\begin{equation}\label{trace_est}
\begin{aligned}
\|\p^\ve_1\|^2_{L^2(\Gamma_{\cI})} +  \|\p^\ve_1\|^2_{L^2(\Gamma_{\cE})} &\leq C_\sigma \|\p^\ve_1\|^2_{L^2(\Omega)} +\sigma  \|\nabla \p^\ve_1\|^2_{L^2(\Omega)}  \leq 
 \mu_1 \big[C_\sigma \|\p^\ve_1\|^2_{L^2(\Omega_M^\ve)} + \sigma \|\nabla \p^\ve_1\|^2_{L^2(\Omega_M^\ve)} \big], \hspace{-0.5 cm } 
\end{aligned}
\end{equation} 
where $\sigma>0$ is arbitrary, the constant $C_\sigma$ is independent of $\ve$, and $\mu_1$ is as in Lemma~\ref{lem:extension}. 

 Next, we show the boundedness of  $\p^\ve_1$. We define $\Phi^\ve_{\beta}$ as the solution  of the linear problem 
\begin{equation}\label{ax_prob_1}
\left\{
\begin{aligned}
&\partial_t \Phi^\ve_\beta= \text{div}(D \nabla \Phi^\ve_{\beta})   
&& \text{ in } \Omega_{M,T}^\ve,  
&&\; \;  \Phi^\ve_{\beta}(0,x) =0  && \text{ in } \Omega_M^\ve, \\
& D \nabla \Phi^\ve_{\beta}\cdot \bnu = \beta(1+  \Phi^\ve_\beta)  && \text{ on }  \Gamma_{\mathcal I,T}, \qquad  && D \nabla \Phi^\ve_{\beta}\cdot \bnu = 0   && \text{ on }  \Gamma^\ve_T, \\
& D \nabla \Phi^\ve_{\beta}\cdot \bnu = 0  && \text{ on }  \Gamma_{\mathcal E\mathcal U,T}&& \hspace{ 0.6 cm} \Phi^\ve_{\beta} && \; a_3\text{-periodic in }   x_3,
\end{aligned}
\right.
\end{equation}
 where $D$ is symmetric and $(D\boldsymbol{\xi}, \boldsymbol{\xi})\geq d|\boldsymbol \xi|^2$ for  all $\boldsymbol{\xi}\in \mathbb R^3$ and some $d>0$,  and $\beta \geq 0$. In the same way as in \cite{MP2008}, using the extension of $\Phi^\ve_{\beta}$ from $\Omega_M^\ve$ to $\Omega$ we obtain 
$$\|\Phi^\ve_\beta \|_{L^\infty(0, T; L^\infty(\Omega_{M}^\ve))} \leq C,  \qquad  \|\overline {\Phi^\ve_\beta} \|_{L^\infty(0,T; L^\infty(\Omega))} \leq C, $$ 
  where $\overline{\Phi^\ve_\beta}$ denotes the extension of $\Phi^\ve_\beta$ from $\Omega_{M,T}^\ve$ to $\Omega_T$ and  the constant $C$ is independent of $\ve$. We also notice that $\Phi^\ve_{\beta} \geq 0$ in $\Omega_{M, T}^\ve$. 
 Considering $\hat \p^\ve_{1}= \p^\ve_{1} - \Phi^\ve_{\beta_1}$, where $\Phi^\ve_{\beta_1}$ is the solution of the problem \eqref{ax_prob_1} with $D=D_p^1$ and $\beta = \beta_1$, where  $\beta_1 =\gamma_J (A_1+1)$ with $A_1\geq  \|\p_{0,1}\|_{L^\infty(\Omega)}$ and $\gamma_J$ is as  in the Assumption~\ref{assumptions}$.3$,  and taking  $(\hat \p^\ve_{1} - A_1 )^{+}$ as a  test function yield 
$$
\partial_t \| (\hat \p^\ve_{1} - A_1)^{+}\|^2_{L^2(\Omega_M^\ve)} + 2d_{p}\|\nabla( \hat \p^\ve_1 - A_1 )^{+}\|_{L^2(\Omega_M^\ve)}^2
\leq   2 \gamma_J \|(\hat \p^\ve_{1} - A_1)^{+}\|^2_{L^2(\Gamma_\cI)}.
$$
Using the properties of the extension of $(\p^\ve_1-\Phi^\ve_{\beta_1}-A_1)^{+}$ from $\Omega_M^\ve$ to $\Omega$ and  the trace estimate, similar to \eqref{trace_est}, and applying  the Gronwall inequality   we conclude   $\p^\ve_1(t,x) \leq A_1 +  \|\Phi^\ve_{\beta_1} \|_{L^\infty(\Omega_{M,T}^\ve)}$ for a.a.\ $(t,x) \in  \Omega_{M,T}^\ve$.

Using the boundedness of $\p_1^\ve$ and the non-negativity of  
$\p^\ve_1$ and $\p^\ve_2$, along with  the assumptions on $\J_p$ and $\F_{p}$,  and considering   $\p^\ve_2$  as a test function in the  weak formulation of the  equation for $\p^\ve_2$ in   \eqref{linearised_1}   yield 
\begin{equation}\label{energy_neE_2}
\|\p^\ve_2\|_{L^\infty(0,T; L^2(\Omega_M^\ve))} + \|\p^\ve_2\|_{L^2(0,T; H^1(\Omega_M^\ve))} \leq C, 
\end{equation}
 where  the constant $C$ is independent of $\ve$.   The boundary terms are estimated using the inequality similar to \eqref{trace_est}.
 In the same way  as \eqref{energy_neE_1} and  \eqref{energy_neE_2} we also obtain  the uniform estimates for $\|\p^\ve\|_{L^2(0,T; H^1(\Omega_M^\ve))}$ and  $\|\p^\ve\|_{L^\infty(0,T; L^2(\Omega_M^\ve))}$,  with $\p_2^\ve$ instead of $\widetilde \p_2^\ve$ in \eqref{linearised_1}.

To show that $\p^\ve_2$ is bounded,    we consider $\hat \p^\ve_{2}= \p^\ve_{2} - \Phi^\ve_{\beta_2}$, where   $\Phi^\ve_{\beta_2}$ is the solution of  \eqref{ax_prob_1} with $D=D_p^2$ and $\beta= \beta_2$ with 
 $ \beta_2   \geq (A_2 +1)\|g (\p_1^\ve)\|_{L^\infty(\Gamma_{\mathcal I,T})}$, where $A_2 \geq  \|\p_{0,2}\|_{L^\infty(\Omega)}$ and  the function $g$ is as  in the Assumption~\ref{assumptions}$.3$.  
  Notice that the boundedness of $\p_1^\ve$ in $\Omega_{M,T}^\ve$ together with $\p^\ve_1 \in L^2(0,T; H^1(\Omega_M^\ve))$ ensures the boundedness of $\p_1^\ve$ on $\Gamma_{\mathcal I,T}$,  see e.g.\ \cite{Ptashnyk12}. 
  
 Taking   $(\hat \p^\ve_{2} - A_2e^{Mt} )^{+}$, with  $M$ such  that $A_2 M \geq  \big[A_2 +1 + \|\Phi^\ve_{\beta_2} \|_{L^\infty(\Omega_{M,T}^\ve)}\big]\|g_1 (\p_1^\ve)\|_{L^\infty(\Omega^\ve_{M,T})}$,  where   $g_1$ is introduced in  Assumption~\ref{assumptions}$.2$, as a test function in the weak formulation of the equation for $\p^\ve_2$  in \eqref{linearised_1} and using the assumptions on $F_{p,2}$ and $J_{p,2}$ and the  properties of the extension of $(\p^\ve_2-\Phi^\ve_{\beta_2}- A_2e^{Mt} )^{+}$,  and applying the Gronwall inequality   yield 
$\p_2^\ve(t,x) \leq A_2 e^{MT} +   \|\Phi^\ve_{\beta_2} \|_{L^\infty(\Omega_{M,T}^\ve)}$ for a.a.\ $(t,x) \in \Omega_{M,T}^\ve$. Since  $A$ in the assumptions on $\widetilde \p_2^\ve$ is an arbitrary constant, it can be chosen so that 
$A_2 e^{MT} +   \|\Phi^\ve_{\beta_2} \|_{L^\infty(\Omega_{M,T}^\ve)}\leq A$. 

From the equations  \eqref{linearised_1} and the estimates for
$\p^\ve$  in $L^2(0,T; \cV(\Omega_M^\ve))^2$  shown above,   we obtain the boundedness of $\partial_t \p^\ve$   in $L^2(0,T; \cV(\Omega_M^\ve)^\prime)^2$ for every fixed $\ve$.

To show the existence of a solution $\p^\ve$ of   \eqref{sumbal} with the corresponding boundary and initial conditions in \eqref{BC1},  we consider 
\begin{equation*}
 X=\{n \in  L^2(0, T; H^\varsigma(\Omega_{M}^\ve) )\;   \;  | \; \;  0 \leq n(t,x) \leq A \; \text{ for \ } (t,x)\in \Omega_{M,T}^\ve\},  
\end{equation*}
with $\varsigma \in (1/2,1)$,   and define an operator
$\mathcal K_1: X \to X$, where  $ \p_2^\ve = \mathcal K_1(\widetilde \p_2^\ve)$ is given as a solution of the problem~\eqref{linearised_1}.
The continuity of the functions $\F_{p}$ and $\J_p$,  along with the \textit{a priori} estimates for $\p^\ve$  and the compact embedding of   $L^2(0,T; \mathcal V(\Omega_M^\ve))\cap H^1(0,T; \mathcal V(\Omega_M^\ve)^\prime)$ in $L^2(0, T; H^\varsigma(\Omega_{M}^\ve))$, with $\varsigma < 1$, see e.g.\ \cite{Lions},  ensures the continuity of  $\mathcal K_1$. 
Utilizing the \textit{a priori} estimates and the compact embedding of   $L^2(0,T; \mathcal V(\Omega_M^\ve))\cap H^1(0,T; \mathcal V(\Omega_M^\ve)^\prime)$ in $L^2(0, T; H^\varsigma (\Omega_{M}^\ve))$ again,   and applying the Schauder fixed-point theorem yield  the existence of a  non-negative, bounded  weak solution $\p^\ve$ of the  equations   \eqref{sumbal} with  boundary  and initial conditions in \eqref{BC1}, for every fixed~$\ve$.

To show the existence of a weak  solution of the equations  \eqref{sumbal_11} for $(\n^\ve, b^\ve)$, 
we  first consider for a given $\widetilde \n_{2}^\ve  \in L^2(0,T; H^\varsigma(\Omega_{M}^\ve))\cap L^\infty(0,T; L^\infty(\Omega_M^\ve))$ with $\widetilde \n_{2}^\ve(t,x)\geq 0$ for $(t,x) \in \Omega_{M,T}^\ve$, where  $\varsigma \in (1/2, 1)$,
\beqn\label{linear_2}
\begin{cases}
\partial_t \n^\ve =\text{div}(D_n\nabla \n^\ve) + \widetilde \F_n(\p^\ve,\n^\ve)+ \widetilde \R_n (\n^\ve, b^\ve,  \mathcal N_\delta(\be(\bu^\ve)))  & \text{ in } \Omega_{M,T}^\ve,\\
\partial_t b^\ve = \phantom{\text{div}(D_c\nabla n^\ve_{c})+ \widetilde \F_n(\p^\ve,\n^\ve)  }\;  R_{b}(\n^\ve_1, \widetilde \n^\ve_2, b^\ve, \mathcal N_\delta(\be(\bu^\ve))) &   \text{ in } \Omega_{M,T}^\ve,\\
D_n\nabla \n^\ve \, \bnu  = \G(\n^\ve)\,   \mathcal N_\delta(\be(\bu^\ve))   \hspace{ 2 cm }    \text{ on } \Gamma_{\cI, T},  \qquad 
D_n\nabla \n^\ve\, \bnu =\widetilde \J_n(\n^\ve)  &  \text{ on } \Gamma_{\cE, T}, \\
   \n^\ve \hspace{ 4.9 cm }  a_3\text{-periodic in } x_3, \hspace{ 0.8 cm }    D_n \nabla \n^\ve\, \bnu = 0   &   \text{ on }  \Gamma_{\cU,T}\cup \Gamma^\ve_{T}, \\
\n^\ve(0,x) =\n_{0}(x),  \qquad b^\ve(0,x) =b_{0}(x),  &\text{ in } \Omega_M^\ve, 
\end{cases}
\eeqn
where 
$$
\begin{aligned}
& \widetilde \R_n (\n^\ve, b^\ve,  \mathcal N_\delta(\be(\bu^\ve)))=  (R_{n,1} (\n^\ve_1,\widetilde \n_2^\ve,  b^\ve,  \mathcal N_\delta(\be(\bu^\ve))),  R_{n,2} (\n^\ve, b^\ve,  \mathcal N_\delta(\be(\bu^\ve))))^T, \\
& \widetilde \F_n(\p^\ve,\n^\ve) =(F_{n,1}(\p^\ve,\n^\ve_1, \widetilde \n^\ve_2),   F_{n,2}(\p^\ve,\n^\ve))^T, \qquad  \widetilde \J_n(\n^\ve) = (J_{n,1}(\n_1^\ve, \widetilde \n_2^\ve), J_{n,2}(\n^\ve))^T.
\end{aligned}
$$
  Similar to the problem  \eqref{linearised_1},  applying the Galerkin method and using the estimates similar to   \eqref{energy_n_bd_1},  we obtain the existence of a unique weak solution of  \eqref{linear_2}.

To show the non-negativity of $\n_1^\ve$,  $\n_2^\ve$,   and $b^\ve$, we define the reaction terms in the  equations  \eqref{linear_2} by 
\begin{eqnarray*}
&& f_{n,1}(\n^\ve_1, b^\ve)=F_{n,1}(\p^\ve, \n^\ve_1,\widetilde \n^\ve_2)+  R_{n,1} (\n^\ve_1, \widetilde \n^\ve_2, b^\ve, \mathcal N_\delta(\be(\bu^\ve))), \\
&& f_{n,2}(\n^\ve, b^\ve)= F_{n,2}(\p^\ve, \n^\ve)+R_{n,2}(\n^\ve, b^\ve, \mathcal N_\delta(\be(\bu^\ve))),
\\
&& f_b(\n^\ve_1, b^\ve)=R_b(\n^\ve_{1},\widetilde \n^\ve_{2}, b^\ve, \mathcal N_\delta(\be(\bu^\ve))).
\end{eqnarray*}
Using the properties of the functions $\F_n$, $\R_{n}$, $R_b$, and $\mathbb E_M$ and the non-negativity of $\widetilde \n^\ve_2 $  and  $\p_j^\ve$, $j=1,2$,  we obtain that $f_{n,1}$, $f_b$ and  $f_{n,2}$  are Lipschitz continuous in $(-\mu, M)^2$ and 
$(-\mu, M)^3$, respectively,    for some $\mu>0$ and any $0<M<+\infty$ and
\begin{equation*}
f_{n,1}(0, b^\ve) \geq 0,   \qquad   f_b(\n^\ve_1, 0)  \geq 0,  \;\;  \quad 
 f_{n,2}(\n^\ve_1, 0, b^\ve)\geq 0 \quad  \text{ for } \; \n^\ve_1 \geq 0, \; b^\ve \geq 0.
\end{equation*} 
The assumptions on $\J_n$, $\G$, and $\mathbb E_M$ ensure that  the boundary terms  are Lipschitz continuous in $(-\mu, M)^2$ and $(-\mu, M)^3$, respectively.  Moreover,  $J_{n,1}(0, \widetilde \n_2^\ve) \geq 0$, 
  $J_{n,2}(\n_1^\ve, 0) \geq 0$  and   $\mathcal N_\delta({\bf A})\, G_2(\n^\ve_1, 0) \geq 0$ for  all $\n^\ve_1 \geq 0$ and ${\bf A} \in \mathbb R^{3\times 3}$.
  
Applying the Theorem on positive invariant regions \cite[Theorem 2]{Redlinger}, with $K_1(\n^\ve, b^\ve) = - \n_1^\ve$,   $K_2(\n^\ve, b^\ve) =  -\n^\ve_2$,  and $K_3(\n^\ve, b^\ve) = - b^\ve$, 
 and using the non-negativity of the initial data yield $\n_1^\ve(t,x) \geq 0$, $\n_2^\ve(t,x) \geq 0$, and $b^\ve(t,x) \geq 0$   for  $(t,x) \in \Omega_{M,T}^\ve$.

Next, we derive  estimates for  the solutions of \eqref{linear_2}.  Taking $\n^\ve$   as a test function  in the weak formulation of the equation for $\n^\ve$ in  \eqref{linear_2} yields
\begin{equation*}
\begin{aligned}
 &\partial_t \|\n^\ve\|^2_{L^2(\Omega_M^\ve)} +\|\nabla \n^\ve\|^2_{L^2(\Omega_M^\ve)}
\leq  C_1\big[1+  \|g_{2}(\p^\ve)\|^2_{L^2(\Omega_M^\ve)} +  \|\n^\ve\|^2_{L^2(\Omega_M^\ve)}
 + \|\widetilde \n^\ve_2\|^2_{L^2(\Omega_M^\ve)} 
\big]
\\& \qquad \qquad + C_2\big[1+\|\mathcal N_\delta(\be(\bu^\ve))\|_{L^\infty(\Omega)}\big] \big[1+\|\n^\ve\|_{L^2(\Omega_M^\ve)}
+\|\widetilde \n^\ve_2 \|_{L^2(\Omega_M^\ve)} 
+\|b^\ve \|_{L^2(\Omega_M^\ve)}\big] \|\n^\ve\|_{L^2(\Omega_M^\ve)}.
\end{aligned}
\end{equation*}
Notice that the estimate \eqref{estim_u_1} for $\bu^\ve$, and Assumption~\ref{assumptions}.10  ensure 
\begin{equation}\label{bound_N}
\|\mathcal N_\delta(\be(\bu^\ve))(t)\|_{L^\infty(\Omega)} \leq C\delta^{-3/2} \|\bu^\ve \|_{L^\infty(0,T; \cW(\Omega))}\leq C_\delta 
\end{equation}
for a.a.\ $t\in [0,T]$, where the constants $C$ and  $C_\delta$ are independent of $\ve$.  
The boundary integrals  are estimated as
\begin{equation}\label{boundary_nc_1}
\begin{aligned}
& \big|\big\langle J_{n,1}(\n^\ve_1, \widetilde \n_2^\ve),  \n^\ve_{1} \big\rangle_{\Gamma_\cE}\big|  \leq C_\sigma (1+  \|\n^\ve_1\|^2_{L^2(\Omega_M^\ve)})+  \sigma \|\nabla \n^\ve_1\|^2_{L^2(\Omega_M^\ve)}, \\ 
&\big|\big\langle  \mathcal N_\delta(\be(\bu^\ve))\,  G_2(\n^\ve), \n^\ve_{2} \big\rangle_{\Gamma_\cI} \big|+\big|\big\langle J_{n,2}(\n^\ve),  \n^\ve_{2} \big\rangle_{\Gamma_\cE}\big|  \\ 
&\hspace{ 4 cm } \leq  C_\sigma\big( \delta^{-3} \|\bu^\ve \|^2_{L^\infty(0,T; \cW(\Omega))} +  \|\n^\ve\|^2_{L^2(\Omega_M^\ve)}+1 \big) + 
  \sigma \|\nabla \n^\ve\|^2_{L^2(\Omega_M^\ve)},
\end{aligned}
\end{equation}
where $\sigma >0$ is  arbitrary fixed. Here we used the properties of the extension of $\n^\ve$, see Lemma~\ref{lem:extension}, and estimates similar to \eqref{trace_est}.  
Testing   the equation for $b^\ve$ in \eqref{linear_2} with  ${b}^\ve$ and   using  the assumptions on  $R_b$ and    \eqref{bound_N},   yield 
\begin{equation*}\label{estim_nb_2}
\partial_t \|{b}^\ve\|^2_{L^2(\Omega_M^\ve)}\leq C_\delta \big[ 1+\|b^\ve\|^2_{L^2(\Omega_{M}^\ve)} +  \|\n^\ve_1\|^2_{L^2(\Omega_{M}^\ve)} 
+ \|\widetilde \n^\ve_2\|^2_{L^2(\Omega_{M}^\ve)} 
\big]. 
\end{equation*}
Using the boundedness of $\p^\ve$ and the regularity of the  initial data,  and applying the Gronwall inequality imply 
\begin{equation}\label{energy_n_bd_10}
\begin{aligned}
\|b^\ve \|_{L^\infty(0,T; L^2(\Omega_M^\ve))}+\|\n^\ve \|_{L^\infty(0,T; L^2(\Omega_M^\ve))}  +  \|\nabla \n^\ve\|_{L^2(\Omega_{M, T}^\ve)} 
\leq C \big[ 1+\|\widetilde \n^\ve_2\|_{L^2(\Omega_{M,T}^\ve)}\big],   
\end{aligned}
\end{equation}
where $C$ is independent of $\ve$. Considering   $\n^\ve_2$ instead of $\widetilde \n^\ve_2$ in  \eqref{linear_2},   in the same way as \eqref{energy_n_bd_10}, we obtain  
\begin{equation}\label{energy_n_bd_1}
\begin{aligned}
\|b^\ve \|_{L^\infty(0,T; L^2(\Omega_M^\ve))}+\|\n^\ve \|_{L^\infty(0,T; L^2(\Omega_M^\ve))}  +  \|\nabla \n^\ve\|_{L^2(\Omega_{M, T}^\ve)} 
\leq C,  
\end{aligned}
\end{equation}
where $C$ is independent of $\ve$. 

 The estimates for $\n^\ve$  in $L^\infty(0, T; L^2(\Omega_{M}^\ve))$ and $L^2(0,T; H^1(\Omega_M^\ve))$ and for $b^\ve$ in $L^\infty(0,T; L^2(\Omega_M^\ve))$ and the weak formulation of equations \eqref{linear_2} ensure the boundedness of 
$\partial_t \n^\ve$ in $L^2(0,T; \cV(\Omega_M^\ve)^\prime)$ and $\partial_t b^\ve$  in $L^2(\Omega_{M,T}^\ve)$, for every fixed $\ve >0$.

Next we show the boundedness of $\n^\ve$ and $b^\ve$. For each fixed $\ve >0$, we have that  $\n^\ve$ are bounded as  solutions of  reaction-diffusion equations  in a Lipschitz domain $\Omega_M^\ve$ with the reaction terms in $L^\infty(0, T; L^2(\Omega_{M}^\ve))$ (see e.g.\ \cite[Theorem~III.7.1]{Ladyzhenskaya} generalized to Robin boundary conditions).
To show  the boundedness of $\n_1^\ve$ and $\n^\ve_2$  uniform in $\ve$   we use  the iteration  Lemma 3.2 in Alikakos \cite{Alikakos}. 
We  derive the $L^\infty$-estimates considering  $\n^\ve_2$ instead  of $\widetilde \n^\ve_2$ in  \eqref{linear_2}.  The derivation of the $L^\infty$-estimates for $\n^\ve$ and $b^\ve$ with  $\widetilde \n^\ve_2$  in  \eqref{linear_2} follows along the same lines, with the only difference that on the right-hand side of \eqref{estim_nd_infty} and  \eqref{estim_b_infty} we will have additionally  the term  $\|\widetilde \n^\ve_2\|^2_{L^\infty(0,T; L^\infty(\Omega_M^\ve))}$. 
Since    $\n_1^\ve$ and $\n_2^\ve$ are bounded for each fixed $\ve>0$, we have that $|\n^\ve|^{p-2}\n^\ve$, with  $p \geq 2$,  is an admissible test function.  Taking $|\n^\ve|^{p-2}\n^\ve$, with $p=2^\kappa$, $\kappa =1,2,3,\ldots$, as a test function in the weak formulation of the equation for $\n^\ve$ in  \eqref{linear_2} and using \eqref{bound_N},  we obtain 
\begin{equation*}
\begin{aligned}
& \partial_t \|\n^\ve\|^p_{L^p(\Omega_M^\ve)} +2 \frac{p-1}p \|\nabla |\n^\ve|^{\frac p2}\|^2_{L^2(\Omega_M^\ve)}
 \leq C_1^p
 +   p(p-1)\|\n^\ve\|^p_{L^p(\Omega_M^\ve)} \\
& \quad  + C_2\big[1+ \delta^{-\frac{3}2} \|\bu^\ve \|_{L^\infty(0, \tau; \cW(\Omega))}\big]\big[(p-1)\|\n^\ve\|^p_{L^p(\Omega_M^\ve)}
 +  \|b^\ve \|^p_{L^p(\Omega_M^\ve)}+ C_3^p\big] + C_4^p\delta^{-\frac{3p}2} \|\bu^\ve \|^p_{L^\infty(0, \tau; \cW(\Omega))} 
\end{aligned}
\end{equation*}
for $\tau \in (0,T]$. Here, the boundary terms  are estimated by applying  Lemma~\ref{lem:extension} to $|\n^\ve|^{p/2}$  together with   the trace inequality for $H^1$-functions: 
\begin{equation}\label{boundary_nc_2}
\begin{aligned}
&\big| \big \langle  \mathcal N_\delta(\be(\bu^\ve))\,  \G(\n^\ve), |\n^\ve|^{p-2} \n^\ve\big\rangle_{\Gamma_{\cI, \tau}}\big| +\big| \big\langle \J_{n}(\n^\ve),  |\n^\ve|^{p-2}\n^\ve \big  \rangle_{\Gamma_{\cE, \tau}} \big| 
\\
& \leq  (p-1)\Big[C_{\sigma}\|\n^\ve\|^p_{L^p(\Omega_\tau)}  + (\sigma/p^2)\|\nabla |\n^\ve|^{\frac p2}\|^2_{L^2(\Omega_\tau)} \Big] + C_1^p / p
  + (C^p_2/ p) \delta^{-\frac{3p}2} \|\bu^\ve \|^p_{L^\infty(0, \tau; \cW(\Omega))} 
 \\
 & \leq  \mu_1 (p-1)\Big[C_{\sigma} \|\n^\ve\|^p_{L^p(\Omega_{M, \tau}^\ve)}  
 + (\sigma/ p^2)\|\nabla |\n^\ve|^{\frac p2}\|^2_{L^2(\Omega_{M, \tau}^\ve)} \Big] + (C^p/ p) \Big[ \delta^{-\frac{3p}2} \|\bu^\ve \|^p_{L^\infty(0,\tau;\cW(\Omega))} +1 \Big],
 \end{aligned}
\end{equation}
where $\sigma>0$ is arbitrary and $\tau \in (0,T]$, and the constants $C$, $C_1$, $C_2$, $C_\sigma$, and $\mu_1$ are independent of $\ve$. Applying the extension   Lemma~\ref{lem:extension} to $|\n^\ve|^{p/2}$  and using   the Gagliardo--Nirenberg  inequality \cite{Brezis}  imply 
 \begin{equation*}
\begin{aligned}
 \|\n^\ve\|^p_{L^p(\Omega_M^\ve)} \leq   \||\n^\ve|^{\frac p 2}\|^2_{L^2(\Omega)} 
&\leq  \frac{ \sigma}{p^2}\|\nabla |\n^\ve|^{\frac p2}\|^2_{L^2(\Omega)} +
  C_{\sigma} p^{3} \||\n^\ve|^{\frac p 2}\|^2_{L^{1}(\Omega)}
 \\& \leq  \mu_1 \Big[\frac{\sigma}{p^2}\|\nabla |\n^\ve|^{\frac p 2}\|^2_{L^2(\Omega_M^\ve)} +
   C_{\sigma} p^{3} \||\n^\ve|^{\frac p 2}\|^2_{L^{1}(\Omega_M^\ve)} \Big],
\end{aligned}
\end{equation*}
where $\sigma>0$ is arbitrary,  the  constant   $C_{\sigma}$ is independent of $\ve$, and $\mu_1$  is as in Lemma~\ref{lem:extension}. Thus we obtain 
\begin{equation*}
\begin{aligned}
\| \n^\ve(\tau) \|^p_{L^p(\Omega_M^\ve)}+  \frac{p-1}p\, \|\nabla |\n^\ve|^{\frac p2}\|^2_{L^2(\Omega_{M, \tau}^\ve)}
&\leq  p^5 C_1\Big[\sup\limits_{(0,\tau)}\| \n^\ve\|^{\frac p2}_{L^{\frac p2}(\Omega_M^\ve)}\Big]^{2}\\
& + C_2^p \big[\delta^{-\frac{3p}2} \|\bu^\ve\|^p_{L^\infty(0, \tau; \cW(\Omega))}+   \|b^\ve\|^p_{L^\infty(0,\tau; L^p(\Omega))} +1\big]
\end{aligned}
\end{equation*}
for $\tau \in (0,T]$.
Then, using similar recursive iterations as in \cite[Lemma 3.2]{Alikakos}, we obtain 
\begin{equation*}
\| \n^\ve(\tau)\|^p_{L^p(\Omega_M^\ve)} \leq C^{p} 2^{2(p-1)} 2^{10p} [ 1+ \|\bu^\ve\|_{L^\infty(0,\tau; \mathcal W(\Omega))}^p + \|b^\ve\|^p_{L^\infty(0,\tau; L^p(\Omega))}]
\end{equation*}
for  $\tau \in (0,T]$ and $C \geq 1$. Applying 
 the $p$th root, and taking  $p \to \infty$,  yield 
 \begin{equation}\label{estim_nd_infty}
 \|\n^\ve\|^2_{L^\infty(0,\tau; L^\infty(\Omega_{M}^\ve))} \leq  C_1\big[1 + \|b^\ve\|^2_{L^\infty(0,\tau; L^\infty(\Omega_{M}^\ve))}\big], 
 \end{equation} 
for all $\tau \in (0,T]$  and $C_1$ is independent of $\ve$.
Multiplying   the equation for $b^\ve$ in \eqref{linear_2} with $b^\ve$, integrating over $(0,\tau)$, using the assumptions  on $R_b$, and  considering the supremum over $\Omega_{M}^\ve$ give  
\begin{equation}\label{estim_b_infty}
\begin{aligned}
\|b^\ve(\tau)\|^2_{L^\infty(\Omega_M^\ve)} & \leq  \|b_0\|^2_{L^\infty(\Omega_M^\ve)} +
\tau\,  C_2\big[1+\|\n^\ve\|^2_{L^\infty(0,\tau; L^\infty(\Omega_{M}^\ve))} + \|b^\ve\|^2_{L^\infty(0,\tau; L^\infty(\Omega_{M}^\ve))}\big]
\end{aligned}
\end{equation}
for  $\tau \in (0,T]$ and $C_2$ is independent of $\ve$. Using   \eqref{estim_nd_infty} and  iterating over time intervals of length $1/ (2C_2(C_1+1))$ yield  the estimates for  $b^\ve$ and, hence,  for $\n^\ve$ in $L^\infty(0,T; L^\infty(\Omega_{M}^\ve))$,  independent of $\ve$. 

 The  boundedness of $\mathcal N_\delta(\be(\bu^\ve))$, $\n^\ve$  and $b^\ve$   ensures the estimate for   $\|\partial_t  b^\ve\|_{L^\infty(0,T; L^\infty(\Omega_{M}^\ve))}$, independent of $\ve$.

 To show the existence of a weak solution $(\n^\ve, b^\ve)$ of the  equations  \eqref{sumbal_11} with  the boundary and initial conditions in \eqref{BC1}, we consider
 the operator  $\mathcal K_2: X
 \to X$ defined by $ \n^\ve_{2} =
\mathcal K_2(\widetilde \n^\ve_{2})$, where  $\n^\ve_{2}$ solves  the problem \eqref{linear_2} and $X=\{ n \in L^2(0,T; H^\varsigma(\Omega_M^\ve))\cap L^\infty(0,T; L^\infty(\Omega_M^\ve))\; | \;   n(t,x) \geq 0 \; \text{ for } (t,x) \in \Omega_{M, T}^\ve \}$, with $\varsigma\in (1/2, 1)$.  The continuity of  $\mathcal K_2$ is ensured by the continuity of   $\F_n$,  $\R_{n}$, $R_b$, $\J_n$,  and $\G$,   the \textit{a priori} estimates for $\n^\ve$ and $b^\ve$, the compact embedding of $L^2(0,T; H^1(\Omega_M^\ve)) \cap H^1(0,T; \cV(\Omega_M^\ve)^\prime)$ in $L^2(0, T; H^\varsigma(\Omega_{M}^\ve))$,  for $\varsigma < 1$, and the estimate 
\begin{equation}\label{estim_nb_11}
\sup_{(0,T)} \|b^{\ve,1}- b^{\ve,2}\|_{L^2(\Omega_M^\ve)} \leq C_\delta\big[ \big \|\n^{\ve,1}- \n^{\ve,2}\|_{L^2(\Omega_{M,T}^\ve)}+ \big \|\widetilde \n^{\ve,1}_2- \widetilde \n^{\ve,2}_2\|_{L^2(\Omega_{M,T}^\ve)}\big].
\end{equation}
The estimate \eqref{estim_nb_11} is obtained by considering the difference of equation \eqref{linear_2}  for $b^{\ve,1}$ and $b^{\ve,2}$, testing by  $b^{\ve,1}- b^{\ve,1}$, and using the properties of  $R_b$.  
Then applying the Schaefer fixed-point theorem and the compact embedding of $L^2(0,T; H^1(\Omega_M^\ve)) \cap H^1(0,T; \cV(\Omega_M^\ve)^\prime)$ in $L^2(0,T; H^\varsigma(\Omega_{M}^\ve))$, with $\varsigma \in (1/2,1)$,  yields  the existence of a fixed point of $\mathcal K_2$.

Hence, combining this result  with the existence result for $\p^\ve$,  ensures  the existence of a  weak solution of  \eqref{sumbal}--\eqref{BC1}.
Considering the equations for the difference of two solutions $\p^{\ve, 1}-\p^{\ve, 2}$,  $\n^{\ve, 1}-\n^{\ve, 2}$,  and $b^{\ve, 1}-b^{\ve, 2}$,   and using the uniform boundedness of $\p_j^{\ve,l}$, $\n_j^{\ve,l}$ and $b^{\ve, l}$, with $j=1,2$ and $l=1,2$, we obtain the uniqueness of a weak solution   of the  problem \eqref{sumbal}--\eqref{BC1} for a given $\bu^\ve \in L^\infty(0,T; \cW(\Omega))$. 
\end{proof}

\noindent\textbf{Remark.} The proof of Theorem~\ref{th:exist_1} follows  along the same lines if   $\J_p$  is a function  of  $\int_{\Omega_M^\ve} \p^\ve dx$ instead of $\p^\ve$.

\subsection{Existence of a unique  solution of the coupled system  {\rm \eqref{sumbal}--\eqref{sumbal2}}. Proof of  Theorem~\ref{existence_2}.}
Considering the estimates in Lemma~\ref{lemub},  to prove the well-posedness of  the coupled system we shall derive   estimates for  $\|\widetilde b^{\ve,j}\|_{L^\infty(0,T; L^\infty(\Omega_{M}^\ve))} $ in terms of  $\|\be(\widetilde \bu^{\ve,j})\|_{L^\infty(0,T; L^2(\Omega))}$, where $\widetilde b^{\ve,j} = b^{\ve, j}- b^{\ve, j+1}$ and 
$ \widetilde \bu^{\ve,j}= \bu^{\ve, j}-  \bu^{\ve, j+1}$ are the differences of  two fixed-point  iterations.
\begin{proof}[\bf Proof of Theorem~\ref{existence_2}.] 
We prove the existence of a unique weak solution of the coupled system by applying a contraction argument.
  We define the operator $\mathcal K: L^\infty(0, T; L^\infty(\Omega_{M}^\ve))\to   L^\infty(0, T; L^\infty(\Omega_{M}^\ve))$ by $\mathcal K(b^{\ve, j-1})=b^{\ve, j}$,
where $b^{\ve, j}$ is a solution of the system \eqref{sumbal}--\eqref{sumbal2} with $b^\ve$   in \eqref{sumbal2} replaced by  $b^{\ve, j-1}$ and with $\bu^\ve$ in the equations \eqref{sumbal_11}  and the boundary conditions in  \eqref{BC1} replaced by $\bu^{\ve, j}$.

For a given non-negative $b^{\ve, 1}\in L^\infty(0, T; L^\infty(\Omega_{M}^\ve))$, satisfying the initial condition in \eqref{BC1},  by Lemma~\ref{lemub} there exists a unique $\bu^{\ve,2}\in L^\infty(0,T;\cW(\Omega))$ satisfying \eqref{sumbal2}, with $b^\ve$ replaced by $b^{\ve, 1}$.  Then for  $\bu^{\ve,2}\in L^\infty(0,T;\cW(\Omega))$, by Theorem~\ref{th:exist_1} there are unique $\p^{\ve, 2}, \n^{\ve, 2}\in (L^2(0,T; \cV(\Omega_{M}^\ve))\cap   L^\infty(0,T; L^\infty(\Omega_{M}^\ve)))^2$, $b^{\ve, 2}\in W^{1, \infty}(0,T; L^\infty(\Omega_{M}^\ve))$ satisfying \eqref{sumbal}--\eqref{BC1}, and $\p^{\ve, 2}_l, \n^{\ve, 2}_l$, $b^{\ve, 2}$ are non-negative, with $l=1,2$.  Iterating for $j=3,4,\ldots$, we obtain $(\p^{\ve, j}, \n^{\ve, j}, b^{\ve, j}, \bu^{\ve,j})$,   for $j\geq 3$.

For each $j\geq 2$, similar to \eqref{estim_u_21} and \eqref{apriori_estim}, we obtain  {\it a priori} estimates for  $\bu^{\ve,j}$ in $L^\infty(0,T;\cW(\Omega))$,  for $\p^{\ve,j}, \n^{\ve, j}$ in $\big(L^2(0, T; H^1(\Omega_M^\ve)) \cap L^\infty(0,T; L^\infty(\Omega_{M}^\ve))\big)^2$  and for $b^{\ve,j}$  in $W^{1,\infty}(0,T; L^\infty(\Omega_{M}^\ve))$, independently of $ b^{\ve,j-1}$.

To derive a contraction inequality in $ L^\infty(0,  T; L^\infty(\Omega_{M}^\ve))$ we first  take $\boldsymbol{\phi}_n= |\widetilde \n^{\ve, j}|^{p-2} \widetilde \n^{\ve, j}$, where $p =2^\kappa$, with  $\kappa=1,2,3,\ldots$, and $\widetilde \n^{\ve, j}= \n^{\ve, j}- \n^{\ve, j+1}$,  as a test function in the  difference of the equations for $\n^{\ve,j}$ and $\n^{\ve,j+1}$.
For the boundary integrals in the equations for $\widetilde \n^{\ve,j}$ we have, using  the trace inequality, 
\begin{equation*}
\begin{aligned}
& \big \langle \mathcal N_\delta(\be(\bu^{\ve,j})) \big[\G(\n^{\ve,j}) - \G(\n^{\ve,j+1})\big],  |\widetilde \n^{\ve, j}|^{p-2} \widetilde \n^{\ve, j}\big \rangle_{\Gamma_\cI} \leq 0 ,\\
&   \big\langle \J_n(\n^{\ve,j})-  \J_n(\n^{\ve,j+1}),  |\widetilde \n^{\ve, j}|^{p-2} \widetilde \n^{\ve, j} \big\rangle_{\Gamma_\cE} 
 \leq   C_\sigma  p\,  \|\widetilde \n^{\ve,j} \|^p_{L^p(\Omega_M^\ve)} 
+ \sigma (p-1)/ p^{2}\,  \|\nabla|\widetilde \n^{\ve,j}|^{\frac p2}\|^2_{L^2(\Omega_M^\ve)} 
 \end{aligned}
\end{equation*}
 and 
\begin{equation*}
\begin{aligned}
\Big|\big\langle \G(\n^{\ve,j+1})\big[\mathcal N_\delta(\be(\bu^{\ve, j})) - \mathcal N_\delta(\be(\bu^{\ve, j+1})) \big], |\widetilde \n^{\ve, j}|^{p-2} \widetilde \n^{\ve, j} \big\rangle_{\Gamma_\cI} \Big| \leq   C_\sigma  (p-1) \|\widetilde \n^{\ve,j}_2 \|^p_{L^p(\Omega_M^\ve)} \qquad  \\
+ \sigma (p-1)/ p^{2}\,  \|\nabla|\widetilde \n^{\ve,j}_2|^{\frac p2}\|^2_{L^2(\Omega_M^\ve)}  + (C/p) \, \|\mathcal N_\delta(\be(\bu^{\ve, j}))-\mathcal N_\delta(\be( \bu^{\ve, j+1}))\|^p_{L^p(\Omega)}, 
\end{aligned}
\end{equation*}
with an arbitrary $\sigma>0$. Then, the uniform boundedness of $\n^{\ve, j}$ and $b^{\ve, j}$ and the Gagliardo--Nirenberg inequality  applied to  $|\widetilde \n^{\ve, j}|^{p/2}$   ensure
\begin{equation*}
\begin{aligned}
\partial_t \|\widetilde \n^{\ve, j} \|^p_{L^p(\Omega_M^\ve)} + 
 2 \frac {p-1} p\|\nabla |\widetilde \n^{\ve, j}|^{\frac p 2}\|^2_{L^2(\Omega_M^\ve)}
\leq  C \big[1+ \|\mathcal N_\delta(\be(\bu^{\ve, j}))\|_{L^\infty(\Omega)} \big]\Big[    
p^5   \|\widetilde \n^{\ve, j} \|^p_{L^{\frac p 2}(\Omega_M^\ve)}
\\ +  \|\widetilde b^{\ve, j}\|^p_{L^p(\Omega_M^\ve)}+\|\mathcal N_\delta(\be(\bu^{\ve, j}))-\cN_\delta(\be(\bu^{\ve, j+1}))\|^p_{L^p(\Omega)}\Big].
 \end{aligned}
\end{equation*}
Considering   iterations in $p$ as in    \cite[Lemma~3.2]{Alikakos} with $p = 2^\kappa$ and  $\kappa=2,3,\ldots,$  we obtain 
\begin{equation*}
 \|\widetilde \n^{\ve,j}(\tau) \|^p_{L^p(\Omega_M^\ve)}
  \leq  C^p_\delta 2^{10 p}  2^{2(p-1)}  \big[ \| \be(\widetilde \bu^{\ve, j})\|^p_{L^\infty(0,\tau;L^2(\Omega))} +\|\widetilde  b^{\ve, j} \|^p_{L^\infty(0, \tau; L^p(\Omega_{M}^\ve))} \big]
\end{equation*}
for $\tau \in (0, T]$ and $C_\delta \geq 1$.  
Here we also used   the estimate 
\begin{equation*}
 \|\mathcal N_\delta(\be(\bu^{\ve, j})) -\cN_\delta(\be(\bu^{\ve, j+1}))\|^p_{L^p(\Omega)} \leq C^p   \delta^{-\frac {3p}2} \big[\| \be(\widetilde \bu^{\ve, j})\|^p_{L^2(\Omega)} +  \|\widetilde b^{\ve, j} \|^p_{L^p(\Omega_M^\ve)}  \big].
\end{equation*}
Taking  the $p$th  root,  and considering $p \to \infty$ yield
\begin{eqnarray}\label{estim_ndc12}
\begin{aligned}
\|\widetilde \n^{\ve,j}\|_{L^\infty(0, \tau; L^\infty(\Omega_{M}^\ve))} 
\leq C_\delta \big[ \|\be(\widetilde \bu^{\ve,j})\|_{L^\infty(0,\tau; L^2(\Omega))} + \|\widetilde b^{\ve,j} \|_{ L^\infty(0, \tau; L^\infty(\Omega_{M}^\ve))} \big].
\end{aligned}
\end{eqnarray}
Consider the difference of  equations \eqref{weak_sol_n2} for  two iterations $b^{\ve,j}$ and $b^{\ve,j+1}$, and multiply by  $\phi_b =\widetilde  b^{\ve,j}$   to obtain 
\begin{equation*}
\begin{aligned}
  \|\widetilde b^{\ve,j}(\tau)\|^2_{L^\infty(\Omega^\ve_M)}
  \leq C_\delta \int_0^\tau \left[  \| \mathcal N_\delta(\be(\bu^{\ve, j})) -\cN_\delta(\be(\bu^{\ve, j+1}))\|^2_{L^\infty(\Omega_M^\ve)}  +
  \|\widetilde \n^{\ve,j}\|^2_{L^\infty(\Omega^\ve_M)}+   \|\widetilde b^{\ve,j}\|^2_{L^\infty(\Omega^\ve_M)}  \right] d\tau.
\end{aligned}
\end{equation*}
Using the estimate   \eqref{estim_ndc12}, the definition of $\mathcal N_\delta$, and the boundedness of $b^{\ve,j}$ yields 
\begin{equation*}
  \|\widetilde b^{\ve,j}\|^2_{L^\infty(0, \tau; L^\infty(\Omega^\ve_{M}))}
  \leq C_\delta \tau  \big[ \|\be(\widetilde \bu^{\ve, j})\|_{L^\infty(0,\tau; L^2(\Omega))}^2 + \|\widetilde  b^{\ve,j} \|^2_{ L^\infty(0, \tau; L^\infty(\Omega_{M}^\ve))} \big]
\end{equation*}
for  $\tau \in (0,T]$. Then, iterating over time intervals  of length $1/ (2 C_\delta)$,   ensures
$$
\|\widetilde b^{\ve,j}\|^2_{L^\infty(0,\tilde T; L^\infty( \Omega_{M}^\ve))} \leq C \tilde T \|\be(\widetilde \bu^{\ve,j})\|_{L^\infty(0,\tilde T; L^2(\Omega))}^2
$$
for all $\tilde T \in (0, T]$ and   the constant $C$  independent of $\tilde T$, $b^{\ve,1}$ and  $(\p^{\ve, j}, \n^{\ve, j}, b^{\ve, j},  \bu^{\ve,j})$ for all  $j\geq 2$.  Estimate \eqref{ub} yields
\begin{equation}\label{estim_conts_m_1_1}
\|\be(\widetilde \bu^{\ve,j})\|_{L^\infty(0,\tilde T; L^2(\Omega))}^2 \leq C \|\widetilde b^{\ve,j-1}\|^2_{L^\infty(0,\tilde T; L^\infty(\Omega_{M}^\ve))}.
\end{equation}
The  last two inequalities ensure  that for fixed $\delta$ and $\tilde T$ sufficiently small we have the  contraction inequality for  the operator  $\mathcal K$.  Thus,  the same arguments as in the proof of the Banach fixed-point theorem yield that  $\mathcal K$ has a unique fixed point.  Hence, there exists a unique  weak solution of \eqref{sumbal}--\eqref{sumbal2} in $(0,\tilde T)\times \Omega$.  Since $\tilde T$ depends only on the  model parameters,  iterating over time intervals yields the existence of a unique weak solution in $(0,T)\times\Omega$. The {\it a priori} estimates \eqref{estim_u_21},  \eqref{apriori_estim}, together with  \eqref{estim_time},  shown in Lemma~\ref{estim_deriv} below,  imply the estimates~\eqref{estim_un_1}.
\end{proof}

\begin{lemma}\label{estim_deriv}
Under Assumption~\ref{assumptions},   weak solutions of  \eqref{sumbal}--\eqref{sumbal2} satisfy 
\begin{equation}\label{estim_time}
\begin{aligned}
&\|\partial_t \bu^\ve\|_{L^\infty(0,T; \cW(\Omega))}\leq C, \\
&\|\vartheta_h \p^\ve - \p^\ve\|_{L^2( \Omega_{M, T-h}^\ve)}+ \|\vartheta_h \n^\ve - \n^\ve\|_{L^2(\Omega_{M, T-h}^\ve)} \leq Ch^{1/4}, 
\end{aligned}
\end{equation}
for any $h>0$, where $\vartheta_h v(t,x) = v(t+h, x)$ for $(t,x) \in(0, T-h]\times  \Omega_{M}^\ve$ and the constant $C$ is  independent of $\ve$. 
\end{lemma}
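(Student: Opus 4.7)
The plan is to handle the two assertions separately: (i) differentiate the elasticity system formally in time and apply Lax--Milgram-type energy estimates to control $\partial_t\bu^\ve$, and (ii) exploit the weak formulations of the reaction-diffusion equations with ``shift'' test functions to obtain the time-translation estimates for $\p^\ve,\n^\ve$.

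For the first bound, I would set $\bv^\ve := \partial_t \bu^\ve$ and differentiate \eqref{sumbal2} formally in $t$. Since $b^\ve\in W^{1,\infty}(0,T;L^\infty(\Omega_M^\ve))$ by Theorem~\ref{existence_2} and $\mathbb E_M\in C^1$ by Assumption~\ref{assumptions}$.10$, the chain rule yields $\partial_t\bbE^\ve(b^\ve,x) = \partial_b\bbE^\ve(b^\ve,x)\,\partial_t b^\ve$, so $\bv^\ve$ satisfies the elliptic problem
\begin{equation*}
\mathrm{div}\big(\bbE^\ve(b^\ve,x)\be(\bv^\ve)\big) = -\mathrm{div}\big(\partial_b\bbE^\ve(b^\ve,x)\,\partial_t b^\ve\,\be(\bu^\ve)\big) \quad \text{in } \Omega,
\end{equation*}
with boundary conditions $\bbE^\ve(b^\ve,x)\be(\bv^\ve)\,\bnu + \partial_b\bbE^\ve(b^\ve,x)\,\partial_t b^\ve\,\be(\bu^\ve)\,\bnu = -\partial_t p_\cI\,\bnu$ on $\Gamma_\cI$ and $=\partial_t\bff$ on $\Gamma_\cE\cup\Gamma_\cU$. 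Testing with $\bv^\ve\in\cW(\Omega)$, using the coercivity of $\bbE^\ve$, the uniform bound on $\partial_b\bbE^\ve(b^\ve,\cdot)\partial_t b^\ve$ in $L^\infty(L^\infty)$, the previously established bound on $\be(\bu^\ve)$ in $L^\infty(L^2)$, the trace inequality for $\bv^\ve$, and Assumption~\ref{assumptions}$.9$ on $(\bff,p_\cI)$, gives the desired estimate.

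For the second bound, I would use the standard duality trick: for $w^\ve\in\{\p^\ve,\n^\ve\}$,
\begin{equation*}
\|\vartheta_h w^\ve - w^\ve\|_{L^2(\Omega_{M,T-h}^\ve)}^2 = \int_0^{T-h}\int_t^{t+h}\bigl\langle \partial_s w^\ve(s),\,\vartheta_h w^\ve(t)-w^\ve(t)\bigr\rangle_{\cV',\cV}\,ds\,dt.
\end{equation*}
Inserting the weak formulations \eqref{weak_sol_n1}, \eqref{weak_sol_n11} with the $t$-frozen test function $\vartheta_h w^\ve(t,\cdot)-w^\ve(t,\cdot)\in\cV(\Omega_M^\ve)$ allows $\partial_s w^\ve(s)$ to be replaced by the diffusion, reaction, and boundary contributions. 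Each diffusion term is controlled by Cauchy--Schwarz together with Fubini: for example $\int_0^{T-h}\int_t^{t+h}\|\nabla \p^\ve(s)\|_{L^2}^2\,ds\,dt\le h\|\nabla\p^\ve\|_{L^2(L^2)}^2$, and the matching factor $\|\nabla(\vartheta_h\p^\ve-\p^\ve)\|_{L^2(\Omega_{M,T-h}^\ve)}$ is bounded by the $L^2(H^1)$ norm of $\p^\ve$ from \eqref{estim_un_1}. Reaction terms $\F_p(\p^\ve)$, $\F_n(\p^\ve,\n^\ve)$, $\R_n(\n^\ve,b^\ve,\mathcal N_\delta(\be(\bu^\ve)))$ and the boundary nonlinearities $\J_p$, $\J_n$, $\gamma_p\p^\ve$, $\G(\n^\ve)\mathcal N_\delta(\be(\bu^\ve))$ are all uniformly bounded in $L^\infty$ thanks to the a~priori bounds of Theorem~\ref{existence_2} and the bound $\|\mathcal N_\delta(\be(\bu^\ve))\|_{L^\infty(\Omega)}\le C_\delta\|\bu^\ve\|_{L^\infty(\cW(\Omega))}$ already used in \eqref{bound_N}. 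Boundary contributions are estimated with the trace inequality applied to the extension from Lemma~\ref{lem:extension}. Adding up, $\|\vartheta_h w^\ve-w^\ve\|_{L^2(\Omega_{M,T-h}^\ve)}^2\le Ch$, which is in fact sharper than the stated $Ch^{1/2}$.

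The main technical obstacle is the time-differentiated elasticity step: because $b^\ve$ is only Lipschitz in time (not Sobolev of higher order) and the boundary data $\bff,p_\cI$ lie in $H^1(L^2)$, one must be careful that the formal differentiation is justified. This is handled by differentiating the finite-dimensional Galerkin approximations used to construct $\bu^\ve$, obtaining the estimate at that level, and then passing to the limit; the chain rule on $\bbE^\ve(b^\ve,\cdot)$ is legal because $\mathbb E_M\in C^1$ and $b^\ve\in W^{1,\infty}$ in time. Everything else is a direct consequence of the uniform bounds in Theorem~\ref{existence_2} and the trace/extension machinery already introduced in Lemma~\ref{lem:extension}.
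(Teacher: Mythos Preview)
Your proposal is correct and follows essentially the same approach as the paper: differentiate \eqref{sumbal2} in time and test with $\partial_t\bu^\ve$ for the first estimate, and integrate the reaction--diffusion equations over $(t,t+h)$ and test with the time difference for the second (your ``duality trick'' is exactly this). Your observation that the argument actually yields the sharper bound $\|\vartheta_h w^\ve - w^\ve\|_{L^2}\le Ch^{1/2}$ rather than $Ch^{1/4}$ is correct and worth noting; the paper simply does not optimize the H\"older exponents, since any positive power of $h$ suffices for the Kolmogorov compactness used downstream.
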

\begin{proof} 
Differentiating the equations of linear elasticity \eqref{sumbal2} with respect to time $t$, testing it with $\partial_t \bu^\ve$, and using the uniform boundedness of $\partial_t b^\ve$ imply the estimate for $\|\partial_t \be(\bu^\ve)\|_{L^\infty(0,T; L^2(\Omega))}$. Applying the second Korn inequality we obtain the estimate for $\partial_t \bu^\ve$ in $L^\infty(0,T; \cW(\Omega))$. 

To show the estimates for  $\vartheta_h \p^\ve - \p^\ve$ and $\vartheta_h \n^\ve - \n^\ve$ 
we integrate the equations for  $\p^\ve$ and $\n^\ve$ in \eqref{sumbal} and  \eqref{sumbal_11} over $(t, t+h)$ and consider $\vartheta_h \p^\ve - \p^\ve$  and $\vartheta_h \n^\ve - \n^\ve$  as   test functions, respectively,  
 \begin{equation*}
\begin{aligned}
& \| \vartheta_h \p^\ve - \p^\ve \|^2_{L^2( \Omega_{M, \tau}^\ve)} \leq
\Big| \Big\langle D_p\int_t^{t+h} \nabla \p^\ve\,  ds,  \vartheta_h\nabla \p^\ve -\nabla \p^\ve \Big \rangle_{\Omega_{M, \tau}^\ve} 
+ \Big\langle \int_t^{t+h} \F_p(\p^\ve)\, ds,  \vartheta_h \p^\ve - \p^\ve \Big \rangle_{\Omega_{M, \tau}^\ve} \Big|
\\
&+\Big| \Big\langle \int_t^{t+h} \J_p(\p^\ve )\, ds,  \vartheta_h \p^\ve - \p^\ve \Big \rangle_{\Gamma_{\cI, \tau}^\ve} 
- \Big\langle \gamma_p \int_t^{t+h}  \p^\ve \, ds,  \vartheta_h \p^\ve - \p^\ve \Big \rangle_{\Gamma_{\cE, \tau}^\ve} \Big|
\end{aligned}
\end{equation*}
and 
 \begin{equation*}
\begin{aligned}
& \| \vartheta_h \n^\ve - \n^\ve \|^2_{L^2( \Omega_{M, \tau}^\ve)} \leq 
\Big| \Big\langle D_n\int_t^{t+h}\hspace{-0.35 cm }  \nabla \n^\ve   ds,  \vartheta_h\nabla \n^\ve -\nabla \n^\ve \Big \rangle_{\Omega_{M, \tau}^\ve}
\hspace{-0.3 cm } + \Big \langle \int_t^{t+h} \hspace{-0.3 cm }   \G(\n^\ve)  \mathcal N_\delta(\be(\bu^\ve)) ds, \vartheta_h \n^\ve - \n^\ve \Big\rangle_{\Gamma_{\cI, \tau}} \Big|\\ 
&  + 
 \Big| \Big \langle \int_t^{t+h} \hspace{-0.25 cm } \big[\F_n(\p^\ve,  \n^\ve) +  \R_n(\n^\ve, b^\ve, \mathcal N_\delta(\be(\bu^\ve)))\big]ds,  \vartheta_h \n^\ve- \n^\ve \Big\rangle_{\Omega_{M, \tau}^\ve}\Big| 
 + \Big|  \Big\langle \int_t^{t+h} \hspace{-0.25 cm }   \J_n (\n^\ve) ds, \vartheta_h \n^\ve - \n^\ve \Big\rangle_{\Gamma_{\cE, \tau}} \Big|
 \end{aligned}
\end{equation*}
for all $\tau \in (0, T-h]$ and any $h>0$. Notice that  $\p^\ve, \n^\ve \in \big(L^2(0,T; H^1(\Omega_M^\ve)) \cap H^1(0,T; \cV(\Omega_M^\ve)^\prime)\big)^2$ for every fixed $\ve>0$. Then  the boundedness of $\p^\ve_j$, $\n^\ve_j$, and $b^\ve$,  with $j=1,2$, the  estimates for $\mathcal N_\delta(\be(\bu^\ve))$ in \eqref{bound_N} and for $\p^\ve$  and $\n^\ve$ in \eqref{apriori_estim},  together with the H\"older inequality,  imply the estimates for $\p^\ve(t+h,x) - \p^\ve(t,x)$ and  $\n^\ve(t+h,x) - \n^\ve(t,x)$,   stated in  the Lemma.  
\end{proof}

\section{\textit{A priori} estimates and existence and uniqueness  results  for the microscopic Model~II.}\label{exist_uniq_ModelII}
For the  equations of linear elasticity \eqref{sumbal2}  we have the same results as in Lemma~\ref{lemub}. 
The main difference in the proof of the well-posedness result for  Model II is in the derivation of   \textit{a priori} estimates for $\n^\ve$ and $b^\ve$.  

\subsection{Existence of a unique weak solution of the problem {\rm \eqref{sumbal}, \eqref{BC1}, \eqref{sumbal_1}, \eqref{BC4}}  for a given $\bu^\ve$.}
\begin{theorem}\label{th:exist_4}
Under Assumption~\ref{assumptions} and for $\bu^\ve \in L^\infty(0,T; \cW(\Omega))$ such that 
\begin{equation}\label{estim_u_2}
\| \bu^\ve\|_{L^\infty(0,T;\cW(\Omega))} \leq C,
\end{equation}
where the constant $C$ is  independent of  $\ve$, 
 there exists a unique  non-negative weak solution $(\p^\ve, \n^\ve, b^\ve)$ of the microscopic problem  {\rm \eqref{sumbal}, \eqref{BC1}, \eqref{sumbal_1}, \eqref{BC4}} satisfying   the \textit{a priori} estimates
\eqref{estim_un_3}.
\end{theorem}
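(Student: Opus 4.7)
The plan is to follow the three-step architecture of the proof of Theorem~\ref{th:exist_1}, treating the decoupled $\p^\ve$-system first and then the coupled $(\n^\ve, b^\ve)$-system, but adapting the second step to the two structural differences of Model~II: the diffusion term in the equation for $b^\ve$, and the pointwise (rather than $\delta$-averaged) dependence of $\Q_n, Q_b$ on $\be(\bu^\ve)$. Since \eqref{sumbal}--\eqref{BC1} for $\p^\ve$ is identical to Model~I, existence, uniqueness, non-negativity, and the uniform $L^\infty\cap L^2(0,T;H^1)$-bounds for $\p^\ve$ are obtained verbatim by the Galerkin/Schauder/barrier-function/Alikakos argument of Theorem~\ref{th:exist_1}. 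For the $(\n^\ve, b^\ve)$-subsystem I would linearize by freezing $(\widetilde\n_2^\ve, \widetilde b^\ve)$ in the nonlinearities, solve the resulting linear parabolic system by the Galerkin method---noting that $b^\ve$ now lies in $L^2(0,T;\cV(\Omega_M^\ve))\cap H^1(0,T;\cV(\Omega_M^\ve)^\prime)$ thanks to the diffusion in \eqref{sumbal_1}---and close by the Schauder fixed-point theorem in $L^2(0,T;H^\varsigma(\Omega_M^\ve))^3\cap L^\infty(\Omega_{M,T}^\ve)^3$ for some $\varsigma\in(1/2,1)$, using the compact embedding $L^2(0,T;H^1)\cap H^1(0,T;\cV^\prime)\hookrightarrow L^2(0,T;H^\varsigma)$. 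Non-negativity follows from the positive invariant regions theorem \cite{Redlinger} applied with $K=(-\n_1^\ve,-\n_2^\ve,-b^\ve)$, using the sign conditions on $Q_{n,i}$, $Q_b$, $J_{n,i}$, and $\G$ in Assumption~\ref{assumptions}.

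The main obstacle is the uniform $L^\infty$-bound for $\n^\ve$ and $b^\ve$: in Model~I the average $\mathcal N_\delta(\be(\bu^\ve))$ was in $L^\infty(\Omega_T)$ with norm $\lesssim \delta^{-3/2}\|\bu^\ve\|_{L^\infty(0,T;\cW(\Omega))}$, whereas in Model~II the field $\be(\bu^\ve)$ enters $\Q_n$ and $Q_b$ only through $L^\infty(0,T;L^2(\Omega))$. I would run the Alikakos iteration directly: test the $\n^\ve$- and $b^\ve$-equations with $|\n^\ve|^{p-2}\n^\ve$ and $|b^\ve|^{p-2}b^\ve$ for $p=2^\kappa$, and control the critical contribution
\[
\int_{\Omega_M^\ve}|\be(\bu^\ve)|\,|\n^\ve|^{p}\,dx \;\le\; \|\be(\bu^\ve)\|_{L^2(\Omega)}\,\bigl\||\n^\ve|^{p/2}\bigr\|_{L^4(\Omega_M^\ve)}^{2}
\]
by H\"older, followed by the Gagliardo--Nirenberg inequality $\|w\|_{L^4}^{2}\le C\|w\|_{H^1}^{3/2}\|w\|_{L^2}^{1/2}$ applied to the extension of $|\n^\ve|^{p/2}$ (Lemma~\ref{lem:extension}) and Young's inequality with exponents $(4/3,4)$. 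With $\sigma$ chosen of order $(p-1)/p^2$ this absorbs the $H^1$-part into the diffusion and leaves a polynomial-in-$p$ factor multiplying $\|\n^\ve\|_{L^p}^{p}$ and $\|b^\ve\|_{L^p}^{p}$, which is exactly the growth that the recursion in $\kappa$ (cf.~\cite[Lemma~3.2]{Alikakos}) tolerates. Iterating then gives the $\ve$-independent $L^\infty$-bound; boundary integrals on $\Gamma_\cI,\Gamma_\cE$ are absorbed as in \eqref{boundary_nc_2}, using that $\mathcal N_\delta(\be(\bu^\ve))$ is still $L^\infty$ in the boundary conditions for $\n^\ve$. Once the $L^\infty$-bounds are in place, testing with $\n^\ve$ and $b^\ve$ themselves yields the $L^\infty(0,T;L^2)$- and $L^2(0,T;H^1)$-estimates in \eqref{estim_un_3}.

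For uniqueness, I would take the difference of two solutions and test with the difference in $L^2$; the Lipschitz estimates in Assumption~\ref{assumptions}.11 together with the uniform $L^\infty$-bounds just derived yield a Gr\"onwall-type inequality which closes on a short interval and iterates to $[0,T]$. Finally, the time-translation estimate $\|\vartheta_h\n^\ve-\n^\ve\|_{L^2}+\|\vartheta_h b^\ve-b^\ve\|_{L^2}\le Ch^{1/4}$ is obtained exactly as in Lemma~\ref{estim_deriv}: integrate each equation over $(t,t+h)$, test with the corresponding time increment, and combine the uniform $L^\infty$-bounds, the $L^2(0,T;H^1)$-bound on the gradients, the estimate \eqref{bound_N} for $\mathcal N_\delta(\be(\bu^\ve))$, and the H\"older inequality.
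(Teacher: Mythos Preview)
Your proposal is correct and follows essentially the same architecture as the paper's proof: reuse Theorem~\ref{th:exist_1} verbatim for $\p^\ve$, obtain non-negativity of $(\n^\ve,b^\ve)$ via invariant regions, close the $(\n^\ve,b^\ve)$-subsystem by a fixed-point argument, and derive the uniform $L^\infty$-bound by Alikakos iteration after controlling the critical term $\int_{\Omega_M^\ve}|\be(\bu^\ve)|\,|\n^\ve|^p\,dx$ via H\"older and Gagliardo--Nirenberg; the time-increment estimate is indeed obtained exactly as in Lemma~\ref{estim_deriv}. Two cosmetic differences with the paper: it freezes only $\widetilde\n_2^\ve$ (not $\widetilde b^\ve$) and closes with Schaefer rather than Schauder, and it first records the $L^\infty(0,T;L^2)\cap L^2(0,T;H^1)$ estimate \eqref{estim_L2_nd} as the base case before iterating. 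One small technical adjustment: the interpolation you write, $\|w\|_{L^4}^2\le C\|w\|_{H^1}^{3/2}\|w\|_{L^2}^{1/2}$ with $w=|\n^\ve|^{p/2}$, leaves $\|\n^\ve\|_{L^p}^p$ on the right-hand side after Young, which by itself does not give the $L^{p/2}\to L^p$ recursion that \cite[Lemma~3.2]{Alikakos} needs; you must follow it (as the paper does, and as is done in the proof of Theorem~\ref{th:exist_1}) with the further interpolation $\||\n^\ve|^{p/2}\|_{L^2}^2\le (\sigma/p^2)\|\nabla|\n^\ve|^{p/2}\|_{L^2}^2+C_\sigma p^3\||\n^\ve|^{p/2}\|_{L^1}^2$, or equivalently interpolate $L^4$ directly against $L^1$, so that the recursion lands on $\|\n^\ve\|_{L^{p/2}}^p$.
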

\begin{proof}
The equations for $\p^\ve$  in  both microscopic problems,  Model I and Model II, are the same. Thus the proof of the existence and     uniqueness   and  the derivation of the  \textit{a priori} estimates  for  solutions of  the subsystem for $\p^\ve$  follows the same lines as in the  proof  of Theorem~\ref{th:exist_1}. 

The current proof differs from that  of Theorem~\ref{th:exist_1}   in the derivation of the {\it a priori} estimates for $\n^\ve$ and $b^\ve$, since now the reaction terms in  equations \eqref{sumbal_1}   depend on $\bbE^\ve(b^\ve, x)\be(\bu^\ve)$ and not on its  local  average. 
Similar to the proof of Theorem~\ref{th:exist_1}, to show the existence of a weak solution of  \eqref{sumbal_1},  with the initial and boundary conditions in  \eqref{BC1} and \eqref{BC4}, we apply a fixed-point argument   and  consider  $F_{n,1} (\p^\ve, \n_1^\ve, \widetilde \n_2^\ve)$ instead of $F_{n,1} (\p^\ve, \n^\ve)$ and $\Q_{n,1}(\n^\ve_1, \widetilde \n^\ve_2, b^\ve, \be(\bu^\ve))$ instead of $\Q_{n,1}(\n^\ve, b^\ve, \be(\bu^\ve))$  in the equation  for $\n_1^\ve$, as well as  $Q_{b}(\n^\ve_1, \widetilde \n^\ve_2, b^\ve, \be(\bu^\ve))$ instead of $Q_{b}(\n^\ve, b^\ve, \be(\bu^\ve))$  in the equation for $b^\ve$ and
$J_{n,1}(\n^\ve_1, \widetilde \n_2^\ve)$ instead of $J_{n,1}(\n^\ve)$ in the boundary conditions for $\n_1^\ve$,  for a given $ \widetilde \n_2^\ve  \in L^2(0,T; H^\varsigma(\Omega_M^\ve))\cap L^\infty(0,T; L^\infty(\Omega_M^\ve))$ with $ \widetilde \n_2^\ve \geq 0$ and $\varsigma \in (1/2, 1)$.  Notice that due to the assumptions on $\F_n$, $\Q_n$,  $Q_b$,  and $\J_n$,  the derivation of the {\it a priori} estimates     follows along  the same lines for     $\n^\ve_1, \widetilde \n_2^\ve \geq 0$ and $\n^\ve_1, \n_2^\ve \geq 0$. 

By applying the theory of invariant regions \cite[Theorem 2]{Redlinger} and \cite[Theorem 14.7]{Smoller},  we obtain the non-negativity of  $\n_j^\ve$,  $j=1,2$, and   $b^\ve$  in the same way as in the proof of  Theorem~\ref{th:exist_1}.
Taking $\n^\ve$  and $b^\ve$ as  test functions in \eqref{weak_sol_n3}  and using the non-negativity of   $\n^\ve_j$ and $b^\ve$ and  the boundedness of $\p^\ve$,  along with  the assumptions on  $\F_n$, $\Q_{n}$,  $Q_b$,  and  $\J_n$,  see Assumption~\ref{assumptions}, yield
\begin{equation}\label{estim_L2_21}
\begin{aligned}
 \Big[\|\n^\ve(\tau)\|^2_{L^2(\Omega_{M}^\ve)} + \|b^\ve(\tau)\|^2_{L^2(\Omega_{M}^\ve)} + \|\nabla \n^\ve\|^2_{L^2(\Omega_{M, \tau}^\ve)}+ \|\nabla b^\ve\|^2_{L^2(\Omega_{M, \tau}^\ve)}
\Big] \qquad \qquad \qquad 
\\  \leq C_1\big[ \|\n^\ve\|^2_{L^2(\Omega_{M,\tau}^\ve)}+ \|b^\ve\|^2_{L^2(\Omega_{M,\tau}^\ve)}\big]
 +C_2\big[ 1+\delta^{-3}\|\be(\bu^\ve)\|^2_{L^\infty(0,\tau; L^2(\Omega))} \big] \\ +  
C_3\|\be(\bu^\ve)\|_{L^\infty(0,\tau; L^2(\Omega_M^\ve))}\big[  \|\n^\ve\|^2_{L^2(0,\tau; L^4(\Omega_{M}^\ve))}  +  \|b^\ve\|^2_{L^2(0,\tau; L^4(\Omega_{M}^\ve))}\big]
\end{aligned}
\end{equation}
for  $\tau \in (0, T]$ and the constants $C_1$ and $C_2$ independent of $\ve$.   The boundary integrals for $\n^\ve$ are estimated in the same way as in the proof of Theorem~\ref{th:exist_1}, see  estimate \eqref{boundary_nc_1}. 
Considering  the properties of the extension of $\n^\ve$ and $b^\ve$ in Lemma~\ref{lem:extension},   applying the Gagliardo--Nirenberg inequality to estimate  $\|\n^\ve\|_{L^4(\Omega)}$ and $\|b^\ve\|_{L^4(\Omega)}$,   taking into account the estimate \eqref{estim_u_2},  and using  the Gronwall inequality  imply
 \begin{equation}\label{estim_L2_nd}
\|\n^\ve\|_{L^\infty(0, T; L^2(\Omega_{M}^\ve))} + \|\nabla \n^\ve\|_{L^2(\Omega_{M, T}^\ve)} + \|b^\ve\|_{L^\infty(0, T; L^2(\Omega_{M}^\ve))} + \|\nabla b^\ve\|_{L^2(\Omega_{M, T}^\ve)}\leq C, 
\end{equation}
where the constant $C$ is independent of $\ve$. 

The properties of   $\F_{n}$, $\Q_n$,  $\J_n$, and $Q_b$ and  the estimates for $\n^\ve$, $b^\ve$,  and   $\|\be(\bu^\ve)\|_{L^\infty(0,T; L^2(\Omega))}$ yield that for each fixed $\ve>0$ the functions $\n^\ve$ and $b^\ve$ are bounded, see e.g.\ \cite[Theorem III.7.1]{Ladyzhenskaya} generalized to Robin boundary conditions, and, hence,    $(b^\ve)^{p-1}$ and $|\n^\ve|^{p-2} \n^\ve$,  with $p \geq 2$, are admissible test functions in \eqref{weak_sol_n3}.  
Considering $|b^\ve|^{p-1}$ as a test function in the equation for $b^\ve$ in \eqref{weak_sol_n3}, using the assumptions on $Q_b$ and the non-negativity of $b^\ve$ yield 
\begin{equation*}
\begin{aligned}
 \|b^\ve(\tau)\|^p_{L^p(\Omega_{M}^\ve)} + 4 \frac{p-1} p \|\nabla |b^\ve|^{\frac p 2}\|^2_{L^2(\Omega_{M, \tau}^\ve)} \leq 
C^p_1\big[1+ \|\n^\ve\|^p_{L^\infty(0,\tau; L^2(\Omega_{M}^\ve))} \big] 
 + 
(p-1)\||b^\ve|^{\frac p 2}\|^2_{L^2(0,\tau; L^4(\Omega_{M}^\ve))} \\
+  C_2\|\be(\bu^\ve)\|_{L^\infty(0, \tau; L^2(\Omega_M^\ve))} \big[ (p-1)\||b^\ve|^{\frac p 2}\|^2_{L^2(0,\tau; L^4(\Omega_{M}^\ve))} + \||\n^\ve|^{\frac p 2}\|^2_{L^2(0,\tau; L^4(\Omega_{M}^\ve))} + C^p\big].
\end{aligned}
\end{equation*}
In a similar way, using the boundedness of $\p^\ve$ and the assumptions on $\F_n$, $\Q_n$,  $\G$,  and $\J_n$,  see Assumption~\ref{assumptions},  and considering $|\n^\ve|^{p-2}\n^\ve$  as  a test function in the equations for $\n^\ve$ in   \eqref{weak_sol_n3}   yield 
 \begin{equation*}
 \begin{aligned}
\|\n^\ve(\tau) \|^p_{ L^p(\Omega_{M}^\ve)} +4 \frac {p-1} p\|\nabla |\n^\ve|^{\frac p 2}\|^2_{L^2(\Omega_{M, \tau}^\ve)}\leq 
 C_1^p\big[(1+ \delta^{-\frac {3p} 2} )   \| \be(\bu^\ve)\|^p_{L^\infty(0,\tau; L^2(\Omega))} 
 + \| b^\ve\|^p_{L^\infty(0,\tau; L^2(\Omega_M^\ve))}\big] \\ + C_2^p 
    + \frac {p-1} p    \|\nabla |\n^\ve|^{\frac p 2}\|^2_{L^2(\Omega_{M, \tau}^\ve)} +C_3  \big[p \, \||\n^\ve|^{\frac p 2}\|^2_{L^2(\Omega_{M, \tau}^\ve)}
+  (p-1)\||\n^\ve|^{\frac p 2}\|^2_{L^2(0,\tau; L^4(\Omega_{M}^\ve))}\big]
  \\
  + C_4\|\be(\bu^\ve)\|_{L^\infty(0, \tau; L^2(\Omega_M^\ve))} \big[ (p-1)\||\n^\ve|^{\frac p 2}\|^2_{L^2(0,\tau; L^4(\Omega_{M}^\ve))} + \||b^\ve|^{\frac p 2}\|^2_{L^2(0,\tau; L^4(\Omega_{M}^\ve))}+ C^p\big].
   \end{aligned} 
\end{equation*}
  The boundary terms $\langle \mathcal N_\delta( \be(\bu^\ve))\,  \G(\n^\ve), |\n^\ve|^{p-2} \n^\ve \rangle_{\Gamma_{\cI, \tau}}$ and $\langle \J_{n}(\n^\ve), |\n^\ve|^{p-2} \n^\ve \rangle_{\Gamma_{\cE, \tau}}$  are  estimated in the same way as in the proof of  Theorem~\ref{th:exist_1}, see the estimate \eqref{boundary_nc_2}.
Applying the Gagliardo--Nirenberg inequality  together with the   properties of the extension of $|\n^\ve|^{\frac p 2}$ and $|b^\ve|^{\frac p 2}$, see Lemma~\ref{lem:extension}, implies
  \begin{equation*}
 \begin{aligned}
 &  \|b^\ve(\tau)\|^p_{L^p(\Omega_{M}^\ve)}  +  \|\n^\ve(\tau) \|^p_{ L^p(\Omega_{M}^\ve)} + 
  \|\nabla |b^\ve|^{\frac p 2}\|^2_{L^2(\Omega_{M, \tau}^\ve)}  +  \|\nabla |\n^\ve|^{\frac p 2}\|^2_{L^2(\Omega_{M, \tau}^\ve)}
    \leq  C_1  p^{10}  \Big[ \sup_{(0,\tau)}\|\n^\ve\|^{\frac p 2}_{L^{\frac p 2}(\Omega_{M}^\ve)}\Big]^2 \\
 &    + C_2 p^{10}  \Big[  \sup_{(0,\tau)}\|b^\ve\|^{\frac p2}_{L^{\frac p 2}(\Omega_M^\ve)} \Big]^2 \hspace{-0.15 cm} + C_3^p
\big[1 + \|\n^\ve\|^p_{L^\infty(0,\tau; L^2(\Omega_{M}^\ve))} \hspace{-0.08 cm} + \|b^\ve\|^p_{L^\infty(0,\tau; L^2(\Omega_{M}^\ve))}\big] \hspace{-0.08 cm} + C_\delta^p   \| \be(\bu^\ve)\|^p_{L^\infty(0,\tau; L^2(\Omega))}. 
 \end{aligned} 
\end{equation*}
  Then similar to   the proof of Theorem~\ref{th:exist_1},  iterating in $p$, see     \cite[Lemma 3.2]{Alikakos},    and using the estimate   \eqref{estim_L2_nd}  yield
\begin{equation}\label{estim_Linfty_2}
\begin{aligned}
&\|\n^\ve\|_{L^\infty(0,T; L^\infty(\Omega_{M}^\ve))} + \|b^\ve\|_{L^\infty(0,T; L^\infty(\Omega_{M}^\ve))} \\& \leq C_1 \big[1+\| \n^\ve\|_{L^\infty(0,\tau; L^2(\Omega_M^\ve))}+ \| b^\ve\|_{L^\infty(0,\tau; L^2(\Omega_M^\ve))}+ \|\be(\bu^\ve)\|_{L^\infty(0,T; L^2(\Omega))}\big] \leq C_2, 
\end{aligned}
\end{equation}
where $C_1$, $C_2$ are independent of $\ve$.  Here we used  that $$\|b^\ve\|_{L^p(\Omega_{M}^\ve)}  +  \|\n^\ve \|_{ L^p(\Omega_{M}^\ve)} \leq 2 \big[\|b^\ve\|^p_{L^p(\Omega_{M}^\ve)}  +  \|\n^\ve \|^p_{ L^p(\Omega_{M}^\ve)}\big]^{\frac 1p}. $$

Hence, similar to the proof of Theorem~\ref{th:exist_1}, 
using the {\it a priori} estimates  \eqref{estim_L2_nd} and \eqref{estim_Linfty_2}  and applying the Galerkin method and the Schaefer fixed-point theorem  yield  the existence of a unique  solution of the microscopic problem {\rm \eqref{sumbal}, \eqref{BC1}, \eqref{sumbal_1}, \eqref{BC4}} for a given $\bu^\ve$ with $\|\be(\bu^\ve)\|_{L^\infty(0,T; L^2(\Omega))}\leq C$.
The estimates \eqref{estim_L2_nd} and \eqref{estim_Linfty_2} also ensure $\partial_t \n^\ve  \in L^2(0,T; \mathcal V(\Omega_M^\ve)^\prime)^2$ and 
$\partial_t b^\ve  \in L^2(0,T; \mathcal V(\Omega_M^\ve)^\prime)$ for every fixed $\ve >0$.

Similar to the proof of Lemma~\ref{estim_deriv},  to show the last estimate in \eqref{estim_un_3} we integrate the equations for $\n^\ve$ in \eqref{sumbal_1} over $(t, t+h)$ and consider $\vartheta_h \n^\ve - \n^\ve$  as   a test function,  where $\vartheta_h v(t,x) = v(t+h,x)$ for $(t,x) \in (0,T-h]\times \Omega_M^\ve$, 
 \begin{equation*}
\begin{aligned}
& \| \vartheta_h \n^\ve - \n^\ve \|^2_{L^2( \Omega_{M, \tau}^\ve)} \leq 
\Big| \Big\langle D_n\int_t^{t+h} \nabla \n^\ve \,  ds,  \vartheta_h\nabla \n^\ve -\nabla \n^\ve \Big \rangle_{\Omega_{M, \tau}^\ve} \Big|
 + \Big|  \Big\langle \int_t^{t+h}  \J_n (\n^\ve) ds, \vartheta_h \n^\ve - \n^\ve \Big\rangle_{\Gamma_{\cE, \tau}} \Big| \\ &  
+ \Big| \Big \langle \int_t^{t+h}\hspace{-0.17  cm} \big[\F_n(\p^\ve,  \n^\ve) +  \Q_n(\n^\ve, b^\ve, \be(\bu^\ve))   \big]ds,  \vartheta_h \n^\ve- \n^\ve \Big\rangle_{\Omega_{M, \tau}^\ve}
 \hspace{-0.4 cm}  + \Big \langle \int_t^{t+h} \hspace{-0.45  cm} \G(\n^\ve) \mathcal N_\delta(\be(\bu^\ve))  ds, \vartheta_h \n^\ve - \n^\ve \Big\rangle_{\Gamma_{\cI, \tau}}\Big|
\end{aligned}
\end{equation*}
for all $\tau \in (0, T-h]$ and any $h>0$. Then  the boundedness of $\p^\ve_j$, with $j=1,2$, the  estimates for $\bu^\ve$ in \eqref{estim_u_2} and for $\n^\ve$  and $\nabla \n^\ve$ in \eqref{estim_L2_nd} and \eqref{estim_Linfty_2},  together with the H\"older inequality,  imply the estimates for $\n^\ve(t+h,x) - \n^\ve(t,x)$,   stated in  the Theorem.  Similar calculations ensure the corresponding estimate for $b^\ve(t+h,x) - b^\ve(t,x)$.
 \end{proof}

\subsection{Existence of a unique solution of the coupled system {\rm \eqref{sumbal}, \eqref{BC1}--\eqref{BC4}}. Proof of Theorem~\ref{existence_3}.}

We prove the existence of a unique solution of  {\rm \eqref{sumbal}, \eqref{BC1}--\eqref{BC4}} in a similar  way as Theorem~\ref{existence_2}. The only difference is in the derivation of the estimate for $\| b^{\ve,j}-b^{\ve, j+1}\|_{L^\infty(0, T; L^\infty(\Omega_{M}^\ve))}$ for  two fixed-point iterations.

\begin{proof}[\bf Proof of Theorem~\ref{existence_3}.] 
Similar to the proof of Theorem~\ref{existence_2} we define  $\mathcal K: L^\infty(0, T; L^\infty(\Omega_{M}^\ve))\to   L^\infty(0, T; L^\infty(\Omega_{M}^\ve))$ and  derive a contraction inequality.
Considering the equations for $\widetilde \n^{\ve, j}$ and $\widetilde b^{\ve, j}$, where 
$\widetilde \n^{\ve,j}= \n^{\ve, j} - \n^{\ve, j+1}$, $\widetilde b^{\ve,j}= b^{\ve, j} - b^{\ve, j+1}$ and $\widetilde \bu^{\ve,j} = \bu^{\ve,j} -  \bu^{\ve, j+1}$, 
and taking $\widetilde \n^{\ve, j}$ and $\widetilde b^{\ve, j}$  as test functions yield
\begin{equation*}
\begin{aligned}
&\|\widetilde \n^{\ve,j}(\tau)  \|^2_{L^2(\Omega_M^\ve)} + \|\nabla \widetilde \n^{\ve,j}\|^2_{L^2(\Omega_{M,\tau}^\ve)}+
\|\widetilde b^{\ve,j}(\tau)  \|^2_{L^2(\Omega_M^\ve)} + \|\nabla \widetilde b^{\ve,j}\|^2_{L^2(\Omega_{M,\tau}^\ve)}
\\ 
&\leq  C_1 \Big[1+\|b^{\ve,j}\|_{L^\infty(\Omega_{M,\tau}^\ve)} 
 +\|\n^{\ve,j}\|_{L^\infty(\Omega_{M,\tau}^\ve)}\Big] \Big[\|\be(\widetilde \bu^{\ve,j})  \|^2_{L^2(\Omega_{M,\tau}^\ve)}   + \|\widetilde \n^{\ve,j}  \|^2_{L^2(\Omega_{M,\tau}^\ve)}  + \|\widetilde  b^{\ve,j}  \|^2_{L^2(\Omega_{M,\tau}^\ve)} \Big]
\\ 
&\qquad\qquad + C_2\|\be(\bu^{\ve, j+1})\|_{L^\infty(0,\tau; L^2(\Omega_M^\ve))} \Big[1+ \|\n^{\ve,j}\|_{L^\infty(\Omega_{M,\tau}^\ve)}  +  \|b^{\ve,j}\|_{L^\infty(\Omega_{M,\tau}^\ve)} + \|\n^{\ve, j+1}\|_{L^\infty(\Omega_{M, \tau}^\ve)} 
\\
&\hspace {6. cm } + \|b^{\ve, j+1}\|_{L^\infty(\Omega_{M, \tau}^\ve)}\Big]  
\Big[  \|\widetilde \n^{\ve,j}  \|^2_{L^2(0,\tau; L^4(\Omega_{M}^\ve))} +  \|\widetilde b^{\ve,j}  \|^2_{L^2(0,\tau; L^4(\Omega_{M}^\ve))}\Big]. 
 \end{aligned}
\end{equation*}
Using the trace inequality and the assumptions on $\J_n$ and $\G$, the boundary terms  for $\widetilde \n^{\ve,j}$ are estimated as  
\begin{equation*}
\begin{aligned}
& \langle \J_{n} (\n^{\ve, j}) - \J_n (\n^{\ve, j+1}), \widetilde \n^{\ve,j}\rangle_{\Gamma_{\cE, \tau}} \leq   C_\sigma \| \widetilde \n^{\ve,j}\|^2_{L^2(\Omega^\ve_{M, \tau}) } + 
\sigma\| \nabla \widetilde \n^{\ve,j}\|^2_{L^2(\Omega^\ve_{M, \tau}) }, \\
& \langle \mathcal N_\delta(\be(\bu^{\ve,j})) \big(\G(\n^{\ve, j}) -  \G( \n^{\ve, j+1})\big), \widetilde \n^{\ve, j}\rangle_{\Gamma_{\cI, \tau}} \leq 0 
\end{aligned}
\end{equation*}
and  
\begin{equation*}
\begin{aligned}
\big| \big \langle \big(\mathcal N_\delta(\be(\bu^{\ve,j}))- \mathcal N_\delta(\be(\bu^{\ve, j+1})) \big) \G (\n^{\ve, j+1}), \widetilde \n^{\ve, j} \big \rangle_{\Gamma_{\cI, \tau}}\big|
\leq C_{\sigma_1} \|\mathcal N_\delta(\be(\bu^{\ve,j}))- \mathcal N_\delta(\be(\bu^{\ve, j+1})) \|^2_{L^2(\Gamma_{\cI,\tau}) }
\\
+\sigma_1\|\n_2^{\ve, j+1} \|^2_{L^\infty(\Gamma_{\cI, \tau})}\| \widetilde \n_2^{\ve, j}\|^2_{L^2(\Gamma_{\cI, \tau}) }
\leq  \sigma \big(\| \widetilde \n_2^{\ve, j}\|^2_{L^2(\Omega^\ve_{M, \tau}) } + 
\| \nabla \widetilde \n_2^{\ve,j}\|^2_{L^2(\Omega^\ve_{M, \tau}) } \big)
\\  +   C_\delta \big(\| \be(\bu^{\ve,j}) \|^2_{L^\infty(0, \tau; L^2(\Omega))} \|\widetilde b^{\ve,j} \|^2_{L^2(\Omega_{M, \tau}^\ve)}  + 
\|b^{\ve, j+1} \|^2_{L^\infty(0,\tau; L^2(\Omega_{M}^\ve))} \|\be(\widetilde \bu^{\ve,j}) \|^2_{L^2(\Omega_{\tau})} \big),  
 \end{aligned}
\end{equation*}
with arbitrary $\sigma_1, \sigma>0$. 
Using   the  Gagliardo--Nirenberg inequlity   we  estimate    $\|\widetilde \n^{\ve, j} \|^2_{L^2(0,\tau; L^4(\Omega_{M}^\ve))}$ and
$\|\widetilde b^{\ve, j} \|^2_{L^2(0,\tau; L^4(\Omega_{M}^\ve))}$ 
 in terms of $ \|\widetilde \n^{\ve, j} \|^2_{L^2(\Omega_{M,\tau}^\ve)}$,   $ \|\nabla \widetilde \n^{\ve,j} \|^2_{L^2(\Omega_{M,\tau}^\ve)}$
 and  $ \|\widetilde b^{\ve, j} \|^2_{L^2(\Omega_{M,\tau}^\ve)}$,  $ \|\nabla \widetilde b^{\ve,j} \|^2_{L^2(\Omega_{M,\tau}^\ve)}$, respectively.   Then   the {\it a priori} estimates, similar to those in \eqref{estim_u_21} and \eqref{estim_L2_nd}--\eqref{estim_Linfty_2},  and   the Gronwall inequality yields 
 \begin{equation*}
\begin{aligned}
  \|\widetilde \n^{\ve,j}  \|^2_{L^\infty(0,\tau; L^2(\Omega_M^\ve))} + \|\nabla \widetilde \n^{\ve,j}  \|^2_{L^2(\Omega_{M, \tau} ^\ve)} 
  +  \|\widetilde b^{\ve,j}  \|^2_{L^\infty(0,\tau; L^2(\Omega_M^\ve))} + \|\nabla \widetilde b^{\ve,j}  \|^2_{L^2(\Omega_{M, \tau} ^\ve)} 
\leq  C  \|\be(\widetilde \bu^{\ve,j}) \|^2_{L^2(\Omega_{M, \tau}^\ve)}.
 \end{aligned}
\end{equation*}

Considering  $(\widetilde b^{\ve,j})^{p-1}$  as a test function in the equation   for   the difference of two iterations  
 $b^{\ve, j}$ and $b^{\ve, j+1}$ implies  
\begin{equation} \label{esrim_diff_1}
\begin{aligned}
 \frac 1 p  \|\widetilde b^{\ve, j}(\tau)\|^p_{L^p(\Omega_M^\ve)} +\frac { 4(p-1)}{p^2}  \|\nabla |\widetilde  b^{\ve, j}|^{\frac p 2}\|^2_{L^2(\Omega_{M,\tau}^\ve)}  \leq C_1\Big[ 1+\|\n^{\ve, j}\|_{L^\infty(\Omega_{M, \tau}^\ve)}+ \|b^{\ve, j}\|_{L^\infty(\Omega_{M, \tau}^\ve)}\\ 
 +  \|\n^{\ve, j+1}\|_{L^\infty(\Omega_{M, \tau}^\ve)} + \|b^{\ve, j+1}\|_{L^\infty(\Omega_{M, \tau}^\ve)}\Big] \int_0^\tau 
\Big[  \big(1+ \|\be(\bu^{\ve, j+1})\|_{L^2(\Omega_M^\ve)}\big) \||\widetilde  b^{\ve,j}|^{p}\|_{L^2(\Omega_M^\ve)}  \\
 +  \|\widetilde \n^{\ve,j}  \|_{L^2(\Omega_{M}^\ve)}  \||\widetilde b^{\ve, j}|^{p-1}\|_{L^2(\Omega_{M}^\ve)} \Big] dt \\
 + C_3  \big[1+\|\n^{\ve,j}\|_{L^\infty(\Omega_{M, \tau}^\ve)} + \|b^{\ve,j}\|_{L^\infty(\Omega_{M, \tau}^\ve)}\big]  \int_0^\tau \|\be(\widetilde \bu^{\ve,j}) \|_{L^2(\Omega_M^\ve)} \||\widetilde b^{\ve, j}|^{p-1}\|_{L^2(\Omega_M^\ve)}  dt.
\end{aligned}
\end{equation}
The last term in \eqref{esrim_diff_1} we  rewrite as
\begin{eqnarray}\label{estim_diff_11}
 \int_0^\tau \|\be(\widetilde \bu^{\ve,j}) \|_{L^2(\Omega_M^\ve)} \||\widetilde b^{\ve,j}|^{p-1}\|_{L^2(\Omega_M^\ve)} dt  \leq \Big(\int_0^\tau \|\be(\widetilde \bu^{\ve,j}) \|^{\frac{(1+\varsigma)p}{(p\varsigma +1)}}_{L^2(\Omega_M^\ve)} \Big)^{\frac{p\varsigma + 1}{p(1+ \varsigma)}}
\Big( \int_0^\tau \| |\widetilde b^{\ve, j}|^{\frac p 2}\|^{2(1+\varsigma)}_{L^4(\Omega_M^\ve)} dt \Big)^{\frac {p-1}{(1+\varsigma) p}}
\nonumber \\ 
 \leq \frac 1  p  C \|\be(\widetilde \bu^{\ve,j}) \|_{L^{1+\frac 1\varsigma}(0,\tau; L^2(\Omega_M^\ve))}^p   + \frac{p-1} p 
\Big( \int_0^\tau \| |\widetilde b^{\ve, j}|^{\frac p 2}\|^{2(1+\varsigma)}_{L^4(\Omega_M^\ve)} dt \Big)^{\frac 1{1+\varsigma}}\quad 
\end{eqnarray}
 with some  $0<\varsigma <1$.  Applying the Gagliardo--Nirenberg inequality yields 
 \begin{equation*}
 \begin{aligned}
 \| |\widetilde b^{\ve, j}|^{\frac p 2}\|^{2}_{L^4(\Omega_M^\ve)}       \leq C_1 \big[ \| \nabla |\widetilde b^{\ve, j}|^{\frac p 2}\|^{2 a }_{L^2(\Omega_M^\ve)} 
     \| |\widetilde b^{\ve, j}|^{\frac p 2}\|^{2(1-a)}_{L^1(\Omega_M^\ve)}  + \| |\widetilde b^{\ve, j}|^{\frac p 2}\|^{2}_{L^1(\Omega_M^\ve)} \big], 
     \end{aligned} 
 \end{equation*}
 where  $a =   9/{10}$ (for a three-dimensional domain).   Considering $\varsigma$ such that $a(1+ \varsigma) < 1$ we obtain 
 \begin{equation}\label{complex_in}
\begin{aligned}
 \int_0^\tau \||\widetilde  b^{\ve, j}|^{\frac p 2}\|^{2(1+\varsigma)}_{L^4(\Omega_M^\ve)} dt 
&\leq C_1 \| \nabla |\widetilde  b^{\ve, j}|^{\frac p 2}\|^{2a(1+\varsigma) }_{L^2(\Omega_{M,\tau}^\ve)} 
\Big( \int_0^\tau  \| |\widetilde  b^{\ve, j}|^{\frac p 2}\|^{\frac{2(1+\varsigma)(1-a)}{1- a (1+\varsigma)}}_{L^1(\Omega_M^\ve)} dt \Big)^{1- a (1+\varsigma)} \\
 &+ C_2 \int_0^\tau \| |\widetilde b^{\ve, j}|^{\frac p 2}\|^{2(1+\varsigma)}_{L^1(\Omega_M^\ve)} dt\Big]
\leq  \left (\frac \sigma  p\right)^{1+\varsigma} \| \nabla |\widetilde  b^{\ve, j}|^{\frac p 2}\|^{2(1+\varsigma) }_{L^2(\Omega_{M,\tau}^\ve)}  
\\& + C_\sigma \, p^{\frac{ a(1+\varsigma)}{(1-a)}}\Big( \int_0^\tau  \| |\widetilde b^{\ve,j}|^{\frac p 2}\|^{\frac{2(1+\varsigma)(1-a)}{1-a(1+\varsigma)}}_{L^1(\Omega_M^\ve)} dt \Big)^{\frac {1- a(1+\varsigma)}{(1-a)}}+  C \int_0^\tau \| |\widetilde b^{\ve, j}|^{\frac p 2}\|^{2(1+\varsigma)}_{L^1(\Omega_M^\ve)} dt
 \end{aligned}
\end{equation}
for  an arbitrary $\sigma>0$.  Using  the estimate \eqref{complex_in}  in \eqref{estim_diff_11} implies
\begin{equation*}
\begin{aligned}
 \int_0^\tau \|\be(\widetilde \bu^{\ve, j})\|_{L^2(\Omega_M^\ve)} \||\widetilde b^{\ve, j}|^{p-1}\|_{L^2(\Omega_M^\ve)} dt 
   \leq C_1 \frac 1 p  \|\be(\widetilde \bu^{\ve,j}) \|_{L^{1+\frac 1 \varsigma}(0,\tau; L^2(\Omega_M^\ve))}^p    + 
 \sigma \, \frac{ p-1}{p^2} \| \nabla |\widetilde  b^{\ve, j}|^{\frac p 2}\|^2_{L^2(\Omega_{M,\tau}^\ve)} \\   +
 C_\sigma (\tau^{\frac{1- a(1+\varsigma)}{(1-a)(1+\varsigma)}} + \tau^{\frac 1{1+\varsigma}}) \, p^9 \Big[\sup_{(0,\tau)} \| |\widetilde  b^{\ve, j}|^{\frac p 2}\|_{L^1(\Omega_M^\ve)}\Big]^2. 
 \end{aligned}
\end{equation*}
The same estimates hold for  the term 
$ \|\widetilde \n^{\ve, j}  \|_{L^2(\Omega_M^\ve)}\,  \||\widetilde b^{\ve, j}|^{p-1}\|_{L^2(\Omega_M^\ve)} $ in  \eqref{esrim_diff_1}.
Using the Gagliardo--Nirenberg inequality, the first  integral on the right-hand side of \eqref{esrim_diff_1} is estimated as 
\begin{equation*}
\begin{aligned}
&\int_0^\tau \|\be(\bu^{\ve, j+1})\|_{L^2(\Omega_M^\ve)} \||\widetilde b^{\ve,j}|^{\frac p 2}\|^2_{L^4(\Omega_M^\ve)} d\tau
 \leq   (p-1)/p^2 \,  \|\nabla |\widetilde  b^{\ve,j}|^{\frac p 2}\|^2_{L^2(\Omega_{M, \tau}^\ve)} \\
& \qquad \qquad +C\big( \|\be(\bu^{\ve, j+1})\|^{\frac 1{1-a}}_{L^\infty(0,\tau; L^2(\Omega_M^\ve))} +1\big) \Big(\frac {p^2}{p-1}\Big)^{9} \||\widetilde b^{\ve,j}|^{\frac p 2}\|^2_{L^2(0,\tau; L^1(\Omega_M^\ve))}.
 \end{aligned}
\end{equation*}
   Applying the recursive  iterations as in  \cite[Lemma 3.2]{Alikakos},  in the same way as in the proof of Theorem~\ref{existence_2}, we obtain
\begin{equation}\label{estim_cont_m_2}
\| \widetilde b^{\ve, j}\|_{L^\infty(0, \tau; L^\infty(\Omega_{M}^\ve))} \leq C \|\be(\widetilde \bu^{\ve,j}) \|_{L^{1+\frac 1 \varsigma}(0,\tau; L^2(\Omega))}  \qquad \text{ for any  } \;  \varsigma \in (0, 1/9) \quad \text{ and } \; \tau \in (0, T].  
\end{equation} 
Then,  similar  to the proof of Theorem~\ref{existence_2}, 
combining \eqref{estim_cont_m_2}  and   \eqref{estim_conts_m_1_1}, choosing $\tau$ sufficiently small, applying the same argument as in the proof of  the Banach fixed-point theorem,  and  iterating  over time-intervals,  yield  the existence of a unique weak solution of  Model~II. 
\end{proof} 

\section{Convergence results and the derivation of  the macroscopic equations for  Model~I.}\label{secthom}
In this section we first  prove convergence results for a  sequence of solutions of the microscopic problem \eqref{sumbal}--\eqref{sumbal2} and then, using homogenization techniques,  derive a macroscopic  model for plant cell wall biomechanics. 

\subsection{Convergence results for solutions of the microscopic Model I} \label{converg_model_I} 

\begin{lemma}\label{converg_1}
There exist  $\p, \n \in \big[L^2(0,T; \mathcal V(\Omega)) \cap  L^\infty(0,T; L^\infty(\Omega))\big]^2$,  $\hat \p, 
\hat \n  \in L^2(\Omega_T; H^1_{\rm per} (\hat Y)/\mathbb R)^2$,   and  $b\in W^{1, \infty}(0,T; L^\infty(\Omega\times \hat Y_M))$, $\bu, \partial_t \bu \in L^\infty(0,T; \cW(\Omega))$,  $\hat \bu \in L^2(\Omega_T; H^1_{{\rm per}}(\hat Y)/\mathbb R)^3$, such that for a subsequence $(\p^\ve, \n^\ve, b^\ve, \bu^\ve)$ of the sequence of  solutions  of the microscopic problem   \eqref{sumbal}--\eqref{sumbal2} (denoted again by $(\p^\ve, \n^\ve, b^\ve, \bu^\ve)$) we have 
\begin{eqnarray*}
\begin{aligned}
&\p^\ve  \rightharpoonup \p, \quad&& \n^\ve  \rightharpoonup \n  &&   \text{ weakly in } L^2(0,T; H^1(\Omega)),  \\
& \p^\ve  \rightharpoonup \p,   && \n^\ve  \rightharpoonup \n   &&   \text{ two-scale},\\
&\nabla \p^\ve  \rightharpoonup \nabla \p + \hat\nabla_y  \hat \p,  &&  \nabla \n^\ve \rightharpoonup \nabla \n + \hat\nabla_y  \hat \n &&   \text{ two-scale},  \\
&\p^\ve  \to \p ,   && \n^\ve  \to \n &&  \text{ strongly in } L^2(\Omega_T) \text{ and }   L^2((0,T)\times \partial \Omega),  \\
&  b^\ve \rightharpoonup  b ,   && \partial_t  b^\ve  \rightharpoonup  \partial_t  b   && \text{ two-scale}
\end{aligned}
\end{eqnarray*}
as $\ve \to 0$, where $\hat\nabla_y v = (\partial_{y_1} v, \partial_{y_2} v, 0)^T$, and
\begin{eqnarray*}
\begin{aligned}
&\bu^\ve  \rightharpoonup \bu  &&  \text{ weakly}^\ast  \text{ in } L^\infty(0,T; \cW(\Omega)), \\
&\partial_t \bu^\ve  \rightharpoonup \partial_t \bu  &&  \text{ weakly}  \text{ in } L^2(0,T; \cW(\Omega)), \\
&\bu^\ve \rightharpoonup \bu, \;  \quad \nabla \bu^\ve \rightharpoonup  \nabla \bu + \hat\nabla_y \hat \bu \; \;  && \text{ two-scale},  \\
&\int_\Omega \be(\bu^\ve) \, dx  \to \int_\Omega \be(\bu) \, dx  &&  \text{ strongly  in } L^2(0,T),  \; \; \; \qquad   \text{as }\; \ve \to 0.
\end{aligned}\;\;\;\;
\end{eqnarray*}
\end{lemma}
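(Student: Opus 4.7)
The plan is to base everything on the uniform \textit{a priori} estimates in Theorem~\ref{existence_2} and then apply standard weak and two-scale compactness. After extending $\p^\ve,\n^\ve$ from $\Omega_M^\ve$ to $\Omega$ via Lemma~\ref{lem:extension} (keeping the same notation), the bounds in \eqref{estim_un_1} give sequences uniformly bounded in $L^2(0,T;H^1(\Omega))\cap L^\infty(\Omega_T)$, so up to a subsequence they converge weakly in $L^2(0,T;H^1(\Omega))$ to some limits $\p,\n\in L^2(0,T;\cV(\Omega))^2\cap L^\infty(\Omega_T)^2$. The two-scale convergence of $\p^\ve,\n^\ve$ and of their gradients, with correctors $\hat\p,\hat\n\in L^2(\Omega_T; H^1_{\rm per}(\hat Y)/\mathbb R)^2$, then follows from the Allaire--Nguetseng two-scale compactness theorem adapted to our microstructure; because the perforations $Y_F$ are cylinders in the $y_3$-direction, the $y_3$-periodicity of the corrector forces $\partial_{y_3}\hat\p=\partial_{y_3}\hat\n=0$, which is exactly the statement $\nabla\p^\ve\rightharpoonup\nabla\p+\hat\nabla_y\hat\p$ (and analogously for $\n$) in the two-scale sense.

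For the strong convergences, I would apply a Fréchet--Kolmogorov argument. The uniform estimates in space (gradient in $L^2$) together with the translation estimate $\|\vartheta_h\p^\ve-\p^\ve\|_{L^2(\Omega_{M,T-h}^\ve)}\le Ch^{1/4}$ (and its spatial analogue obtained from the gradient bound on the extensions) yield precompactness of $\{\p^\ve\}$ and $\{\n^\ve\}$ in $L^2(\Omega_T)$; passing to a further subsequence, they converge strongly in $L^2(\Omega_T)$ and the limits must coincide with the weak limits $\p,\n$. The strong convergence on $(0,T)\times\partial\Omega$ is obtained from strong convergence in $L^2(0,T;H^s(\Omega))$ with $s\in(1/2,1)$, interpolated between the strong $L^2(\Omega_T)$-convergence and the weak $L^2(0,T;H^1(\Omega))$-convergence, followed by the continuity of the trace $H^s(\Omega)\to L^2(\partial\Omega)$.

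For $b^\ve$, which lives only on $\Omega_M^\ve$, the bound $\|b^\ve\|_{W^{1,\infty}(0,T;L^\infty(\Omega_M^\ve))}\le C$ together with the indicator $\chi_{\Omega_M^\ve}(x)=\chi_{\hat Y_M}(\hat x/\ve)$ allows me to interpret $\chi_{\Omega_M^\ve}b^\ve$ as a uniformly bounded family in $L^\infty(\Omega_T)$ (and similarly for $\partial_tb^\ve$), and the two-scale compactness theorem for bounded sequences in $L^\infty$ furnishes a two-scale limit $b\in L^\infty((0,T)\times\Omega\times\hat Y_M)$ (after identifying $b$ as $\hat Y$-periodic and concentrated on $\hat Y_M$), together with $\partial_t b^\ve\rightharpoonup\partial_t b$ in the two-scale sense. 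The ODE structure of \eqref{weak_sol_n2} will later imply that $b$ is independent of $y$, but for this lemma it suffices to have the abstract two-scale limit.

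For $\bu^\ve$, estimate \eqref{estim_un_1} gives $\bu^\ve\rightharpoonup^\ast\bu$ in $L^\infty(0,T;\cW(\Omega))$ and $\partial_t\bu^\ve\rightharpoonup\partial_t\bu$ in $L^2(0,T;\cW(\Omega))$; the two-scale limits for $\bu^\ve$ and $\nabla\bu^\ve$, with corrector $\hat\bu\in L^2(\Omega_T;H^1_{\rm per}(\hat Y)/\mathbb R)^3$ and $\nabla\bu^\ve\rightharpoonup\nabla\bu+\hat\nabla_y\hat\bu$, come from the standard two-scale result for sequences bounded in $L^2(0,T;H^1(\Omega))$, once again with the $y_3$-independence forced by the cylindrical microstructure. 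Finally, the strong convergence $\int_\Omega\be(\bu^\ve)\,dx\to\int_\Omega\be(\bu)\,dx$ in $L^2(0,T)$ follows from the fact that the function $t\mapsto\int_\Omega\be(\bu^\ve)(t,x)\,dx$ is bounded in $L^\infty(0,T)$ with time derivative bounded in $L^2(0,T)$ uniformly in $\ve$, hence relatively compact in $L^2(0,T)$ by the Aubin--Lions lemma applied in one dimension; the weak two-scale limit then identifies the strong limit as $\int_\Omega\be(\bu)\,dx$. The main obstacles I anticipate are the careful handling of the $y_3$-direction in identifying the correct form of the correctors, and the verification that the translation estimate combined with spatial Sobolev regularity genuinely yields $L^2(\Omega_T)$-compactness at the level of the extensions; both are standard but require invoking Lemma~\ref{lem:extension} consistently.
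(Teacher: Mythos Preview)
Your proposal is correct and follows essentially the same route as the paper's proof: both rely on the uniform \textit{a priori} estimates of Theorem~\ref{existence_2}, the extension Lemma~\ref{lem:extension}, standard two-scale compactness for the weak and two-scale limits, and a Kolmogorov-type argument (using the gradient bounds and the time-translation estimate) for the strong $L^2(\Omega_T)$ convergence of $\p^\ve,\n^\ve$. The only notable variation is in the boundary convergence: the paper invokes the compact embedding $H^1(\Omega)\hookrightarrow H^\varsigma(\Omega)$ together with Simon's compactness result to obtain strong convergence in $L^2(0,T;H^\varsigma(\Omega))$ and then traces, whereas you obtain the same by interpolating between strong $L^2(\Omega_T)$ convergence and the uniform $L^2(0,T;H^1(\Omega))$ bound; both arguments are valid and yield the same conclusion.
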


\begin{proof}
The \textit{a priori} estimates in Theorem~\ref{existence_2}  together with the  extension Lemma \ref{lem:extension}  and the compactness theorems for the two-scale convergence, see e.g.~\cite{allaire,Nguetseng}, ensure the weak and two-scale convergence of $\p^\ve$, $\n^\ve$, $b^\ve$, and $\bu^\ve$, stated in the Lemma.  

The strong convergence of $\p^\ve$ and $\n^\ve$  in $L^2(\Omega_T)$ follows from  the estimates for  $\|\nabla \p^\ve\|_{L^2(\Omega^\ve_{M, T})}$,  $\|\nabla \n^\ve\|_{L^2(\Omega^\ve_{M,T})}$, $\|\vartheta_h\p^\ve- \p^\ve\|_{L^2(\Omega^\ve_{M,T-h})}$, and $\|\vartheta_h\n^\ve - \n^\ve \|_{L^2(\Omega^\ve_{M,T-h})}$, see \eqref{estim_un_1},  together with  the linearity of the extension from $\Omega^\ve_M$ to $\Omega$ and the Kolmogorov compactness theorem \cite{Brezis, necas}.
The  embedding  $\{ \gamma(v) \; | \;  v \in  H^{\varsigma}(\Omega) \} \subset L^2(\partial\Omega)$, with $\varsigma \in (1/2, 1)$ and $\gamma(v)$ denote the trace of $v$ on $\partial \Omega$,    the compact embedding $H^1(\Omega) \subset H^{\varsigma}(\Omega)$, the estimates for $\|\vartheta_h\p^\ve- \p^\ve\|_{L^2(\Omega^\ve_{M,T-h})}$, and $\|\vartheta_h\n^\ve - \n^\ve \|_{L^2(\Omega^\ve_{M,T-h})}$,  and the compactness result in \cite{Simon} ensure  the strong convergence in  $L^2((0,T)\times \partial \Omega)$.   
The boundedness of $\p^\ve$, $\n^\ve$, $b^\ve$, and $\partial_t b^\ve$, along with the convergence results,  implies  the boundedness of the limit functions $\p$, $\n$, $b$, and $\partial_t b$.

The \textit{a priori} estimate  for $\partial_t \be(\bu^\ve)$ yields the last strong convergence stated in the Lemma. 
\end{proof}

In order to pass to the limit in the nonlinear functions $\R_n$, $R_b$ and $\bbE_M$ we have to show  the strong convergence of a subsequence of  $\{b^\ve\}$. To show the strong convergence of a sequence   defined on the perforated  $\ve$-dependent domain $\Omega_M^\ve$,  we  use the unfolding operator to map it to a sequence defined on the fixed domain $\Omega \times \hat Y_M$, see e.g.  \cite{CDG,CDDGZ}.

\begin{definition}
For a measurable function $\phi$ on $\Omega^\ve_M$, 
  the  unfolding operator 
$\mathcal T_\ve$ is defined as 
$$
\mathcal T_\ve(\phi)(x,y) = \phi(\ve [\hat x/\ve]_{\hat Y_M}+ \ve y, x_3)\qquad \quad   \text{ for } \; x \in \Omega, \; y \in \hat Y_M,
$$
where $\hat x = (x_1, x_2)$ and $[\hat x/\ve]_{\hat Y_M}$ is the unique integer combination of the periods, s.t.\ $\hat x/\ve - [\hat x/\ve]_{\hat Y_M}  \in \hat Y_M$.
\end{definition}
For the unfolded sequence $\{\mathcal T_\ve(b^\ve)\}$ we have the following strong convergence result. 
 \begin{lemma} \label{strong_nb_2} 
 Under Assumption~\ref{assumptions} we have, up to a subsequence,  
\begin{eqnarray*}
\cT_\ve(b^\ve)  \to b \quad   \text{ strongly}  \text{ in } L^2(\Omega_T\times \hat Y_M),  \quad  \text{ as } \ve \to 0.
\end{eqnarray*}
\end{lemma}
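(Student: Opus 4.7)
I would prove the strong convergence by applying the Kolmogorov--Riesz--Fr\'echet compactness criterion to $\cT_\ve(b^\ve)$ on the fixed domain $\Omega_T\times\hat Y_M$, exploiting the integral form of the ODE \eqref{weak_sol_n2} to transfer spatial regularity from $\n^\ve$, $b_0$, and $\cN_\delta(\be(\bu^\ve))$ onto $b^\ve$, and then identifying the strong limit with the two-scale weak limit $b$ from Lemma~\ref{converg_1}.

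First I would derive uniform spatial equicontinuity of $b^\ve$. Integrating \eqref{weak_sol_n2} in time and subtracting the identity at two points $\tilde x_1,\tilde x_2\in\Omega_M^\ve$, the local Lipschitz continuity of $R_b$ on the bounded range of its arguments (justified by the $L^\infty$-bounds in Theorem~\ref{existence_2}) together with Gronwall's lemma yields
\[
|b^\ve(t,\tilde x_1) - b^\ve(t,\tilde x_2)| \leq C\Big[|b_0(\tilde x_1)-b_0(\tilde x_2)| + \int_0^T \big(|\n^\ve(s,\tilde x_1)-\n^\ve(s,\tilde x_2)| + |\cN_\delta(\be(\bu^\ve))(s,\tilde x_1) - \cN_\delta(\be(\bu^\ve))(s,\tilde x_2)|\big)\, ds\Big].
\]
Since $b_0\in H^1(\Omega)$ and the extension of $\n^\ve$ is uniformly bounded in $L^2(0,T;H^1(\Omega))$, the first two translation moduli vanish in $L^2$ as the shift tends to zero, uniformly in $\ve$. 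For the last term, $\cN_\delta(\be(\bu^\ve))$ is the positive part of the convolution of $\tr(\bbE^\ve(b^\ve)\be(\bu^\ve))\in L^\infty(0,T;L^2(\Omega))$ with $\chi_{B_\delta}$; the elementary estimate $|B_\delta(\tilde x_1)\triangle B_\delta(\tilde x_2)|\leq C|\tilde x_1-\tilde x_2|$, Cauchy--Schwarz, and the $1$-Lipschitz property of $(\cdot)^+$ give
\[
|\cN_\delta(\be(\bu^\ve))(s,\tilde x_1) - \cN_\delta(\be(\bu^\ve))(s,\tilde x_2)| \leq C_\delta\,|\tilde x_1-\tilde x_2|^{1/2},
\]
uniformly in $\ve$ and $s$. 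Squaring and integrating in $\tilde x$ then yields a modulus of continuity $\|b^\ve(t,\cdot+h)-b^\ve(t,\cdot)\|_{L^2(\Omega_M^\ve\cap(\Omega_M^\ve-h))}\to 0$ as $h\to 0$, uniformly in $\ve$.

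Next I would transfer this estimate to $\cT_\ve(b^\ve)$ and apply compactness. Using the $L^2$-boundedness $\|\cT_\ve(v)\|_{L^2(\Omega\times\hat Y_M)}\lesssim \|v\|_{L^2(\Omega_M^\ve)}$, together with the observation that shifts in $y$ correspond under $\cT_\ve$ to physical-space shifts of order $\ve$ and shifts in the slow variable $x$ to shifts by near-integer multiples of $\ve$, the uniform spatial equicontinuity above translates into uniform equicontinuity of $\cT_\ve(b^\ve)$ in both $x$ and $y$ in $L^2(\Omega\times\hat Y_M)$. The time equicontinuity $\|\cT_\ve(b^\ve)(t+h,\cdot,\cdot) - \cT_\ve(b^\ve)(t,\cdot,\cdot)\|_{L^\infty} \leq Ch$ is immediate from the $L^\infty$-bound on $\partial_t b^\ve$ in \eqref{estim_un_1}. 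Combined with the uniform $L^\infty$-bound on $\cT_\ve(b^\ve)$, the Kolmogorov--Riesz--Fr\'echet criterion yields a subsequence converging strongly in $L^2(\Omega_T\times\hat Y_M)$ to some $\tilde b$. Since $\cT_\ve(b^\ve)\rightharpoonup b$ weakly in $L^2(\Omega_T\times\hat Y_M)$ (equivalently, two-scale, by Lemma~\ref{converg_1}), the strong and weak limits coincide: $\tilde b=b$.

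The main obstacle is controlling $\cN_\delta(\be(\bu^\ve))$ uniformly in $\ve$ without strong convergence of $\bbE^\ve(b^\ve)$ or of $\be(\bu^\ve)$ separately. This is circumvented by working only with the uniform-in-$\ve$ $L^\infty(0,T;L^2(\Omega))$-bound on the product $\tr(\bbE^\ve(b^\ve)\be(\bu^\ve))$, combined with the ball-symmetric-difference estimate, which crucially exploits that the averaging radius $\delta$ is independent of $\ve$. Propagating the resulting modulus of continuity into the $y$-variable of $\cT_\ve(b^\ve)$, where shifts correspond to physical shifts of order $\ve$, is precisely why one needs the modulus to be uniform in $\ve$ across all scales.
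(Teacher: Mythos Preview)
Your argument runs on the same engine as the paper's: the integrated ODE for $b^\ve$, Lipschitz continuity of $R_b$ on the bounded range plus Gronwall, $b_0\in H^1$, the uniform $L^2(0,T;H^1)$ bound on (the extension of) $\n^\ve$, the symmetric--difference estimate $|B_\delta(\tilde x_1)\triangle B_\delta(\tilde x_2)|\le C|\tilde x_1-\tilde x_2|$ for the $\cN_\delta$-term, the $\partial_t b^\ve$ bound for time shifts, and identification of the strong limit with the two-scale limit $b$. In this sense the proposal is correct.

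The route diverges at the compactness step, and there your sketch has a gap. The paper does \emph{not} apply Kolmogorov to $\cT_\ve(b^\ve)$ on $\Omega_T\times\hat Y_M$. Instead it extends $b^\ve$ from $\Omega_M^\ve$ to all of $\Omega$ by solving the same ODE \eqref{weak_sol_n2} with the extended $\n^\ve$, proves $b^\ve\to b$ strongly in $L^2(\Omega_T)$ by Kolmogorov on the fixed domain $\Omega_T$, and then invokes the standard unfolding property that strong $L^2(\Omega_T)$--convergence implies $\cT_\ve(b^\ve)\to b$ strongly in $L^2(\Omega_T\times\hat Y_M)$. Your direct approach requires equicontinuity of $\cT_\ve(b^\ve)$ in the slow variable $\hat x$, and the sentence ``shifts in the slow variable $x$ [correspond] to shifts by near-integer multiples of $\ve$, the uniform spatial equicontinuity above translates into uniform equicontinuity of $\cT_\ve(b^\ve)$'' hides a genuine difficulty: a shift of $\cT_\ve(b^\ve)$ by $h$ in $\hat x$ induces the \emph{$x$-dependent} physical shift $\ve\big([\widehat{(x+h)}/\ve]-[\hat x/\ve]\big)$, which has magnitude $\sim\ve$ and does not tend to $0$ as $h\to 0$ for fixed $\ve$. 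To close this one must combine the smallness of the jump set $\{x:[\widehat{(x+h)}/\ve]\neq[\hat x/\ve]\}$ (measure $\lesssim|h|/\ve$) with the physical modulus at scale $\ve$; this yields a bound of order $(|h|/\ve)\,\omega(\ve)^2$, which is uniformly small only because your explicit rate $\omega(r)=O(r^{1/2})$ gives $(|h|/\ve)\,\omega(\ve)^2\lesssim|h|$. That computation is absent from your outline, and without it the transfer step is unjustified. The paper's extend--then--unfold strategy avoids this interaction between translations and unfolding altogether, which is what makes it cleaner.
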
 
 \begin{proof} 
 Using the extension of $\n^\ve$ from $\Omega_M^\ve$ to $\Omega$, see  Lemma~\ref{lem:extension}, we define 
 the extension of $b^\ve$ from $\Omega_M^\ve$ to $\Omega$ as a solution of  the ordinary differential equation 
 \begin{equation}\label{extend_n_b}
 \begin{aligned}
& \partial_t b^\ve = R_{b}(\n^\ve, b^\ve, \mathcal N_\delta(\be(\bu^\ve)))
 \qquad  && \text{ in } (0,T)\times\Omega, \\
& b^\ve(0,x) = b_0(x)  && \text{ in } \Omega.
 \end{aligned} 
 \end{equation}
 The construction of the extension for $\n^\ve$ and the uniform boundedness of $\n^\ve$  in $\Omega_{M,T}^\ve$, shown in Theorem~\ref{th:exist_1}, ensure 
 $$
 \| \n^\ve \|_{L^\infty(0, T; L^\infty(\Omega))} \leq C_1  \|\n^\ve \|_{L^\infty(0, T; L^\infty(\Omega_{M}^\ve))}\leq C_2, 
 $$
 with the constants $C_1$ and $C_2$ independent of $\ve$.  Then, from equation \eqref{extend_n_b} and using the assumptions on $R_b$,  we also  obtain the uniform  boundedness of  $b^\ve$ and $\partial_t b^\ve$ in $L^\infty(0, T; L^\infty(\Omega))$. 
 
 It follows from the properties of the unfolding operator   \cite{CDG,CDDGZ} that the lemma holds if it is shown that $b^\ve$ converges strongly to $b$. We show the strong convergence of $b^\ve$  by applying the Kolmogorov  theorem \cite{Brezis,necas}.  Considering equation  \eqref{extend_n_b} at $(t,x+\textbf{h}_j)$ and $(t,x)$, where   ${\bf h}_j = h \mathbf{b}_j$, with $(\mathbf{b}_1, \mathbf{b}_2, \mathbf{b}_3)$ being the canonical basis in $\mathbb R^3$ and $h>0$,  taking $b^\ve(t,x+{\bf h}_j) -  b^\ve(t,x)$ as a test function and using the boundedness of $\n^\ve_1$, $\n^\ve_2$, and  $b^\ve$, along with  the local  Lipschitz continuity of  $R_b$,  yield 
 \begin{equation*} 
 \begin{aligned}
 \| b^\ve(\tau, \cdot+\textbf{h}_j) - b^\ve(\tau,\cdot) \|^2_{L^2(\Omega_{2h})} \hspace{-0.1 cm} \leq \hspace{-0.05 cm}
 C \hspace{-0.1 cm}\int_0^\tau\hspace{-0.2 cm} \big[  \|\n^\ve(t, \cdot+{\bf h}_j) -  \n^\ve(t, \cdot)\|^2_{L^2(\Omega_{2h})}\hspace{-0.15 cm} + \|b^\ve(t, \cdot+{\bf h}_j) -  b^\ve(t, \cdot)\|^2_{L^2(\Omega_{2h})}\big] dt 
 \\ +\| b_0(\cdot+{\mathbf h}_j) - b_0(\cdot) \|^2_{L^2(\Omega_{2h})} +   \delta^{-6}\int_0^\tau \Big \| \int_{B_{\delta,h}(x)\cap \Omega}{\rm tr }\, \bbE^\ve(b^\ve) \be(\bu^{\ve}(t,\tilde x)) d\tilde x \Big\|^2_{L^2(\Omega_{2h})}  dt 
  \end{aligned}
 \end{equation*}
for  $\tau \in (0, T]$,  where the constants $C_1, C_2$ are independent of $\ve$ and $h$, 
 $\Omega_{2h} = \{ x \in \Omega  \; | \; \text{dist} (x, \partial \Omega) > 2h \}$,  and 
 $B_{\delta, h}(x)= \big[B_\delta(x+\textbf{h}_j) \setminus B_\delta(x) \big]\cup \big[B_\delta(x) \setminus B_\delta(x+\textbf{h}_j) \big]$.
 Using the regularity of the initial condition $b_0 \in H^1(\Omega)$, the \textit{a priori} estimates for $\be(\bu^\ve)$ and $\nabla \n^\ve$ along with  the fact that
 $|B_{\delta, h}(x)\cap \Omega| \leq C \delta^2 h $ for all $x\in \overline \Omega$, and applying the Gronwall inequality we  obtain 
  \begin{eqnarray} \label{estim_strong1}
 \sup\limits_{t\in (0,T)}\| b^\ve(t, \cdot +\textbf{h}_j) - b^\ve(t,\cdot) \|^2_{L^2(\Omega_{2h})} \leq C_\delta h.
 \end{eqnarray} 
 Extending  $b^\ve$ by zero from $\Omega_T$ into $\mathbb R_{+}\times \mathbb R^3$ and using the uniform boundedness of $b^\ve$ in $L^\infty(0,T; L^\infty(\Omega))$ imply 
 \begin{eqnarray}\label{estim_strong2}
  \|b^\ve\|^2_{L^\infty(0,T; L^2(\widetilde \Omega_{3h}))} +\|b^\ve\|^2_{L^2((T-h,T+h)\times  \Omega)} \leq C h, 
 \end{eqnarray}
where $\widetilde \Omega_{3h} = \{ x\in \mathbb R^3 \; | \; \text{dist}(x, \partial \Omega)< 3h\}$ and the constant $C$ is independent of $\ve$ and $h$. The  estimates for  $\partial_t b^\ve$ ensure
 \begin{eqnarray}\label{estim_strong3}
\|b^\ve(\cdot+h, \cdot) - b^\ve(\cdot,\cdot) \|^2_{L^2((0,T-h)\times\Omega)} \leq C_1 h^2 \|\partial_t b^\ve \|^2_{L^2(\Omega_T)}\leq C_2 h^2, 
 \end{eqnarray}
where $C_1$ and $C_2$ are independent of $\ve$ and $h$. Combining \eqref{estim_strong1}--\eqref{estim_strong3} and applying  the Kolmogorov theorem imply  the strong convergence of $b^\ve$ to $\tilde b$ in $L^2(\Omega_T)$.  The definition of the two-scale-convergence  yields  $\tilde b = b$,  and hence the two-scale limit of $b^\ve$ is independent of $y$.
 \end{proof}

\noindent\textbf{Remark.} Notice that the two-scale convergence of $\bu^\ve$  and the estimates for $\partial_t \be(\bu^\ve)$ and $\partial_t b^\ve$, together with the Kolmogorov theorem \cite{Brezis}, imply  that 
$$
\int_\Omega \mathbb E^\ve(b, x)\be(\bu^\ve) dx  \to \int_\Omega \dashint_{\hat Y} \mathbb E (b, y) ( \be(\bu) + \hat \be_y(\hat \bu) ) dy dx  \qquad  \text{ in } L^2(0,T), 
$$
where $ \hat \be_{y}({\bf v})_{33}= 0$,   $\hat \be_{y}({\bf v})_{j3}=\hat \be_{y}({\bf v})_{3j} = \frac 12 \partial_{y_j} {\bf v}_3$ for $j=1,2$, and 
$ \hat \be_{y}({\bf v})_{ij}= \frac 12 (\partial_{y_i} {\bf v}_j +\partial_{y_j} {\bf v}_i)$ for $i,j=1,2$,
 and
$$
\dashint_{B_\delta(x)\cap\Omega} \mathbb E^\ve(b, \tilde x)\be(\bu^\ve) d\tilde x  \to \dashint_{B_\delta(x)\cap\Omega} \dashint_{\hat Y} \mathbb E (b, y) ( \be(\bu) + \hat \be_y(\hat \bu) ) dy d\tilde x  \; \;   \text{ in } L^2(0,T) 
$$
as $\ve \to 0$, for all $x \in \overline \Omega$. Then,  Lebesgue's dominated convergence  theorem   ensures 
\begin{equation}\label{strong_con_stress_1}
\dashint_{B_\delta(x)\cap\Omega} \mathbb E^\ve(b, \tilde x)\be(\bu^\ve) d\tilde x  \to \dashint_{B_\delta(x)\cap\Omega} \dashint_{\hat Y} \mathbb E (b, y) ( \be(\bu) + \hat \be_y(\hat \bu) ) dy d\tilde x  \quad  \text{ in } L^2(\Omega_T) \; \text{ and } \;   L^2(\Gamma_{\cI,T}), 
\end{equation}
as $\ve \to 0$. 
In the same way we also obtain
\begin{equation}\label{conv_int_eu_2}
\dashint_{B_\delta(x)\cap\Omega} \be(\bu^\ve) \, d\tilde x  \to \dashint_{B_\delta(x)\cap\Omega} \be(\bu) \,  d\tilde x  \qquad  \text{ in }\  L^2(\Omega_T) \;  \text{ and} \;  
L^2(\Gamma_{\cI,T}), \qquad \text{ as } \; \; \ve \to 0. 
\end{equation}

\subsection{Derivation of the macroscopic equations for  Model I. Proof of Theorem \ref{th:macro_1}.}
Using the convergence results, shown in Lemma~\ref{converg_1},  and  applying the two-scale convergence and the periodic unfolding methods, see e.g.~\cite{allaire,CDG,CDDGZ,Nguetseng}, we derive the macroscopic equations for  Model I.
 \begin{proof}[\bf Proof of  Theorem \ref{th:macro_1}]
We consider  $\varphi_b \in C^\infty(\overline \Omega_T)$ and  $\boldsymbol{\phi}_\alpha(t,x) = \boldsymbol{\varphi}_\alpha(t,x)  +\ve \boldsymbol{ \psi}_\alpha(t,x, \hat x/\ve)$, where   $\boldsymbol{\varphi}_\alpha \in C^\infty(\overline \Omega_T)^2$  is $a_3$-periodic in  $x_3$,  $\boldsymbol{\varphi}_\alpha(T,x) ={\bf 0}$ for $x\in \overline\Omega$,   and  $\boldsymbol{\psi}_\alpha \in C^\infty_0(\Omega_T; C^\infty_{\text{per}}(\hat Y))$,  for $\alpha = p, n$,   as test functions  in \eqref{weak_sol_n1}--\eqref{weak_sol_n2}. Applying the two-scale convergence and using  the strong convergence of  $\cT_\ve(b^\ve)$, $\p^\ve$ and  $\n^\ve$, yield
\begin{equation}\label{limit_11}
\begin{aligned}
&- \langle  \p, \partial_t \boldsymbol{\varphi}_p \rangle_{\Omega_T\times \hat Y_M} + \langle D_p (\nabla \p + \hat \nabla_y \hat \p), \nabla \boldsymbol{\varphi}_p + \hat \nabla_y \boldsymbol{\psi}_p \rangle_{\Omega_T\times \hat Y_M}
=  -\langle  \F_{p}(\p), \boldsymbol{\varphi}_p \rangle_{\Omega_T\times\hat Y_M} 
 \\
&\hspace{5 cm } 
 - |\hat Y| \langle \gamma_p \, \p, \boldsymbol{\varphi}_p \rangle_{\Gamma_{\cE,T}} 
 + |\hat Y| \left\langle \J_p\big(\p),  \boldsymbol{\varphi}_p \right\rangle_{\Gamma_{\cI,T}}+   \langle  \p_0,  \boldsymbol{\varphi}_p(0,x) \rangle_{\Omega\times \hat Y_M},
\\
&- \langle  \n, \partial_t   \boldsymbol{\varphi}_n \rangle_{\Omega_T\times \hat Y_M} + \langle D_n (\nabla \n + \hat \nabla_y\hat  \n), \nabla  \boldsymbol{\varphi}_n + \hat \nabla_y  \boldsymbol{\psi}_n \rangle_{\Omega_T\times \hat Y_M} 
 =  |\hat Y|\langle \G(\n)\, \mathcal N^{\rm eff}_\delta(\be(\bu)),  \boldsymbol{\varphi}_n \rangle_{\Gamma_{\cI,T}}  \\
 &\hspace{0.5 cm }   + |\hat Y| \langle \J_n(\n) ,  \boldsymbol{\varphi}_n \rangle_{\Gamma_{\cE,T}} 
 +  \langle \F_n(\p, \n)+  \R_n(\n, b, \mathcal N^{\rm eff}_\delta(\be(\bu) )),   \boldsymbol{\varphi}_n \rangle_{\Omega_T\times \hat Y_M}  
  +\langle  \n_0,  \boldsymbol{\varphi}_n(0,x) \rangle_{\Omega\times \hat Y_M},\\ 
& \langle \partial_t b, \varphi_b \rangle_{\Omega_T} = \langle  R_{b}(\n, b,  \, \mathcal N^{\rm eff}_\delta(\be(\bu))), \varphi_b \rangle_{\Omega_T}.
 \end{aligned}
\end{equation} 
Here we used the fact that the strong convergence of $\cT_\ve(b^\ve)$,   the two-scale convergence of $\bu^\ve$,  and the estimates for  $\partial_t \bu^\ve$ and $\partial_t b^\ve$ ensure
$$
\mathcal N_\delta(\be(\bu^\ve)) \to \mathcal N^{\rm eff}_\delta(\be(\bu)) \qquad  \text{ in } \;   L^2(\Omega_T) \quad  \text{ and } \quad  \text{ in } \;   L^2(\Gamma_{\cI, T})  \qquad \text{ as } \; \; \ve \to 0,  
$$
see the convergence in  \eqref{strong_con_stress_1}  and  the definition of $\mathcal N^{\rm eff}_\delta$ in  \eqref{macro_over_n_eu}.  Choosing $ \boldsymbol{\varphi}_\alpha \equiv {\bf 0}$ we obtain 
\begin{equation}\label{unit_11}
\begin{aligned}
\langle D_p (\nabla \p +\hat \nabla_y \hat \p), \hat \nabla_y  \boldsymbol{\psi}_p\rangle_{\Omega_{T}\times \hat Y_M} =0 \quad \text{ and } \quad 
 \langle D_n (\nabla \n +\hat \nabla_y\hat  \n), \hat \nabla_y  \boldsymbol{\psi}_n\rangle_{\Omega_{T}\times \hat Y_M} =0.
\end{aligned}
\end{equation}
The linearity of  \eqref{unit_11}  implies that  $\hat \p$ and $\hat \n$ have the form 
$$
\hat \p(t,x,y) = \sum_{j=1,2,3} \partial_{x_j} \p(t,x) {\bf v}^j_p(y), \quad  \hat \n(t,x,y) = \sum_{j=1,2,3} \partial_{x_j} \n(t,x) {\bf v}^j_n(y),
$$ 
where ${\bf v}^j_\alpha= ({\bf v}^j_{\alpha,1}, {\bf v}^j_{\alpha,2})^T$ are solutions of the unit cell problems \eqref{unit_n}, for $\alpha=n,p$, $j=1,2,3$. 
The definition of $\mathcal D_\alpha^l$ and  the fact that ${\bf v}^j_{\alpha, l}$ are solutions of \eqref{unit_n} ensure that $\mathcal D_\alpha^l$ are strongly elliptic, $\alpha = n, p$ and $l=1,2$.  

Taking  $ \boldsymbol{\psi}_\alpha \equiv {\bf 0}$
 and considering  first $\boldsymbol{\varphi}_\alpha \in C^1_0(\Omega_T)^2$ and 
then $\boldsymbol{\varphi}_\alpha \in C^1_0(0,T; C^1(\overline \Omega))^2$, with   $\boldsymbol{\varphi}_\alpha$  being $a_3$-periodic in $x_3$, where $\alpha=p,n$,
yields the macroscopic equations and boundary conditions  for $\p$ and $\n$ in \eqref{macro_1} and \eqref{macro_11}.
 
 The equation  \eqref{macro_11} and the regularity of $\p$, $\n$ and $\bu$ imply  $\partial_t \n, \, \partial_t \p  \in L^2(0,T; \cV(\Omega)^\prime)^2$.  Thus,   $\p, \n \in C([0,T]; L^2(\Omega))^2$ and together with the equations for $\p$ and $\n$ in \eqref{limit_11} we obtain  $\p(t, \cdot) \to \p_0$ and $\n(t, \cdot) \to \n_0$  in $L^2(\Omega)^2$ as $t \to 0$. The regularity of $b$ and the two-scale convergence of $\partial_t b^\ve$ ensure  $b(t, \cdot) \to b_0$  in $L^2(\Omega)$. 

Considering $\boldsymbol{\psi}(t,x)=\boldsymbol{\psi}_1(t,x) + \ve \boldsymbol{\psi}_2(t,x,\hat x/\ve)$, 
with $\boldsymbol{\psi}_1 \in C^1(\overline \Omega_T)^3 \cap L^2(0,T; \cW(\Omega))$ and  $\boldsymbol{\psi}_2 \in C^\infty_0(\Omega_T; C^\infty_\text{per}(\hat Y))^3$, as a test function  in  \eqref{weak_sol_u} and applying  the strong convergence of $\cT_\ve(b^\ve)$ and the two-scale convergence of $\bu^\ve$, we obtain
\begin{equation*}\label{macro_22}
|\hat Y|^{-1}  \langle \bbE(b,y) (\be(\bu) + \hat \be_y(\hat \bu)), \be(\boldsymbol{\psi}_1)  + \hat \be_y(\boldsymbol{\psi}_2) \rangle_{\Omega_T\times \hat Y}  
= \big[\langle \mathbf{f}, \boldsymbol{\psi}_1 \rangle_{ \Gamma_{\cE\cU,T}} -
\langle p_{\cI}\bnu, \boldsymbol{\psi}_1 \rangle_{ \Gamma_{\cI,T}} \big].
\end{equation*}
 Taking $\boldsymbol{\psi}_1 \equiv \textbf{0}$ yields 
 \begin{equation}\label{macro_23}
\langle \bbE(b,y) (\be(\bu) + \hat \be_y(\hat \bu)),  \hat \be_y(\boldsymbol{\psi}_2) \rangle_{\Omega_T\times \hat Y}  
=0.
\end{equation}
  The structure of  equations \eqref{macro_23} implies 
$$
\hat \bu(t,x,y) = \frac 12 \sum_{i,j=1}^3  \Big(\frac{\partial u_i(t,x)}{\partial x_j} + \frac{\partial u_j(t,x)}{\partial x_i} \Big) \textit{\bf{w}}^{ij}(t,x,y),
$$
where   $\textit{\bf w}^{ij}$ are solutions of the unit cell problems \eqref{unit_u}.  
Considering $\boldsymbol{\psi}_2\equiv \textbf{0}$ implies  the macroscopic equations for $\bu$. Similar to  \cite[Theorem II.1.1]{OShY} we obtain the symmetries and   the strong ellipticity of   $\bbE_{\rm hom}$. 

In the same manner as for the microscopic model, by deriving a contraction inequality we prove 
the uniqueness of a weak solution of the macroscopic equations and obtain that  the whole sequence of   microscopic solutions converges to a solution of the macroscopic problem.  Here we use  that due to the boundedness of $b$ for  solutions of the unit cell problem \eqref{unit_u} we have ${\bf w}^{ij} \in L^\infty(\Omega_T; H^1_{\rm per}(\hat Y))^3$, for $i,j=1,2,3$.  The contraction inequality and a fixed-point argument also  ensure the existence of a solution of the macroscopic equations \eqref{macro_1}--\eqref{macro_bc_1}. 
 \end{proof}

 \section{Convergence results and the derivation of the macroscopic equations for  Model~II. }\label{macro_model_II}
 Considering the  extension of   solutions of the microscopic problem \eqref{sumbal},~\eqref{BC1}--\eqref{BC4},  given by Lemma~\ref{lem:extension},   we have the following convergence results. 
\begin{lemma}\label{converg_2}
 There exist $\p, \, \n \in \big( L^2(0,T; \cV(\Omega))\cap L^\infty(0,T; L^\infty(\Omega))\big)^2$,  $\hat \p, \,  \hat  \n \in L^2(\Omega_T; H^1_{\rm per}(\hat Y_M)/\mathbb R)^2$,
  $b \in  L^2(0,T; \cV(\Omega))\cap L^\infty(0,T; L^\infty(\Omega))$, $\hat b \in L^2(\Omega_T; H^1_{\rm per}(\hat Y_M)/\mathbb R)$, 
   and   $\bu \in L^\infty(0,T; \cW(\Omega))$, $\hat \bu \in L^2(\Omega_T; H^1_{\rm per}(\hat Y)/\mathbb R)^3$, 
   such that for a  subsequence $(\p^\ve, \n^\ve, b^\ve,  \bu^\ve)$   of the sequence of solutions  of the microscopic problem \eqref{sumbal},~\eqref{BC1}--\eqref{BC4} (denoted again by $(\p^\ve, \n^\ve, b^\ve,  \bu^\ve)$) we have 
 \begin{equation}
 \begin{aligned}
 &\p^\ve  \rightharpoonup  \p, && \n^\ve  \rightharpoonup  \n,&& b^\ve  \rightharpoonup  b  &&\text{ weakly in }  L^2(0,T; H^1(\Omega)),   \\
  &\p^\ve  \to  \p, && \n^\ve \to  \n, &&  b^\ve \to  b  && \text{ strongly in } L^2(\Omega_T) \\ 
 &\p^\ve  \to  \p, && \n^\ve \to  \n, &&  b^\ve \to  b  && \text{ strongly in }    L^2((0,T)\times \partial \Omega),  \\
 &\nabla \p^\ve \rightharpoonup  \nabla \p + \hat \nabla_y \hat \p, && \nabla \n^\ve \rightharpoonup \nabla \n + \hat \nabla_y \hat  \n, && \nabla b^\ve \rightharpoonup \nabla b + \hat \nabla_y \hat b  && \text{ two-scale}, \\ 
 & \bu^\ve  \rightharpoonup  \bu && &&  && \text{ weakly}^\ast \text{ in }  L^\infty(0,T;  \cW(\Omega)),  \\
& \nabla \bu^\ve \rightharpoonup \nabla \bu + \hat \nabla_y \hat \bu && &&  && \text{ two-scale}.
 \end{aligned}
 \end{equation}
 \end{lemma}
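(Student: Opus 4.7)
The plan is to adapt the argument given for Lemma~\ref{converg_1} to Model~II, where the crucial structural change is that $b^\ve$ now solves a reaction-diffusion equation with uniform $H^1$-bounds, so it can be treated in parallel with $\p^\ve$ and $\n^\ve$ rather than through the ODE argument used before. First I would invoke the extension Lemma~\ref{lem:extension} to extend $\p^\ve,\n^\ve,b^\ve$ from $\Omega_M^\ve$ to $\Omega$ while preserving $L^p$ and gradient estimates. Combined with Theorem~\ref{existence_3}, this yields uniform bounds for the extensions in $L^2(0,T;H^1(\Omega))\cap L^\infty(0,T;L^\infty(\Omega))$. Weak compactness in these spaces yields, along a subsequence, the weak convergences stated in the lemma, and the standard two-scale compactness theorem (see \cite{allaire,Nguetseng}) combined with the uniform gradient bounds delivers the existence of correctors $\hat\p,\hat\n,\hat b\in L^2(\Omega_T;H^1_{\rm per}(\hat Y_M)/\mathbb R)$ such that the gradients two-scale converge to $\nabla v + \hat\nabla_y \hat v$ for $v \in \{\p,\n,b\}$. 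The boundedness of the limit functions in $L^\infty(0,T;L^\infty(\Omega))$ follows from lower semicontinuity of the weak$^*$-topology together with the $\ve$-uniform $L^\infty$-estimates.

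Next I would establish the strong convergence in $L^2(\Omega_T)$ by applying the Kolmogorov--Riesz--Fréchet compactness theorem to each of $\p^\ve,\n^\ve,b^\ve$ (after extension). Spatial translations are controlled by the uniform $L^2$-bound on $\nabla(\overline{\p^\ve}),\nabla(\overline{\n^\ve}),\nabla(\overline{b^\ve})$ provided by Lemma~\ref{lem:extension} and \eqref{estim_un_3}; the time translations are controlled by the estimate $\|\vartheta_h v^\ve - v^\ve\|_{L^2(\Omega^\ve_{M,T-h})}\le C h^{1/4}$ of \eqref{estim_un_3}, transferred to the fixed domain using the linearity and $\ve$-uniform boundedness of the extension operator. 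The strong $L^2(\Omega_T)$-limit must coincide with the two-scale limit, hence the latter is $y$-independent, giving the identifications claimed.

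For the strong convergence on $(0,T)\times\partial\Omega$ I would use the continuous embedding of traces $H^{\varsigma}(\Omega) \hookrightarrow L^2(\partial\Omega)$ for some $\varsigma\in(1/2,1)$ together with the compact embedding $H^1(\Omega)\hookrightarrow H^{\varsigma}(\Omega)$. Combining the uniform $L^2(0,T;H^1(\Omega))$-bound with the fractional time-translation estimate in \eqref{estim_un_3} and a compactness result of Simon \cite{Simon} yields a strongly convergent subsequence in $L^2(0,T;H^{\varsigma}(\Omega))$, whose trace converges strongly in $L^2((0,T)\times\partial\Omega)$ to the trace of the interior limit. This argument applies identically to $\p^\ve$, $\n^\ve$ and $b^\ve$.

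Finally, for $\bu^\ve$, the estimate \eqref{estim_un_2} immediately gives weak$^*$ convergence in $L^\infty(0,T;\cW(\Omega))$ along a subsequence, and the standard two-scale decomposition result for sequences bounded in $H^1$ produces $\hat\bu\in L^2(\Omega_T;H^1_{\rm per}(\hat Y)/\mathbb R)^3$ with $\nabla\bu^\ve$ two-scale converging to $\nabla\bu + \hat\nabla_y\hat\bu$. I do not expect a serious obstacle here: unlike Model~I, where the strong convergence of $b^\ve$ had to be extracted through the unfolding operator because $b^\ve$ lacked spatial regularity, here the $H^1$-regularity of $b^\ve$ (from the diffusive term in \eqref{sumbal_1}) places it on the same footing as $\p^\ve$ and $\n^\ve$, so the Kolmogorov argument applies directly and there is no need to introduce $\mathcal T_\ve$ for this step. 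The most delicate point to verify carefully is the validity of the extension estimates uniformly in $\ve$ for a domain $\Omega_M^\ve$ whose perforations touch the lateral periodic boundary, which is handled by the reflection argument noted after Lemma~\ref{lem:extension}.
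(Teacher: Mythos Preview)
Your proposal is correct and follows essentially the same approach as the paper: invoke the extension Lemma~\ref{lem:extension} and the a priori estimates of Theorem~\ref{existence_3} to obtain weak and two-scale convergence via the standard compactness theorems, establish strong $L^2(\Omega_T)$ convergence of $\p^\ve,\n^\ve,b^\ve$ through the Kolmogorov theorem using the gradient bounds and the time-translation estimate in \eqref{estim_un_3}, and deduce strong boundary convergence via the compact embedding $H^1(\Omega)\subset H^\varsigma(\Omega)$ combined with Simon's compactness result. Your observation that $b^\ve$ can now be handled in parallel with $\p^\ve$ and $\n^\ve$ thanks to its $H^1$-regularity is exactly the point the paper exploits.
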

\begin{proof}
The convergence results for $\p^\ve$ and $\bu^\ve$  are obtained in the same way as in Lemma~\ref{converg_1}. 
Due to  the \textit{a priori} estimates  derived  in Theorem~\ref{existence_3} and the compactness theorem for the two-scale convergence,  
we obtain the weak and two-scale convergence for $\n^\ve$ and $b^\ve$.  
The estimates for  $\|\nabla \n^\ve\|_{L^2(\Omega_T)}$ and $\|\nabla b^\ve\|_{L^2(\Omega_T)}$,  as well as for $\|\n^\ve(\cdot+h, \cdot) - \n^\ve(\cdot, \cdot)\|_{L^2(\Omega_{T-h})}$ and $\|b^\ve(\cdot+h, \cdot) - b^\ve(\cdot, \cdot)\|_{L^2(\Omega_{T-h})}$, obtained  from the last estimate in \eqref{estim_un_3} and the linearity of the extension,   together with  the Kolmogorov compactness theorem \cite{Brezis, necas}, ensure the strong convergence of $\n^\ve$ and $b^\ve$ in $L^2(\Omega_T)$.  As in the proof of Lemma~\ref{converg_1},    the compactness of the embedding $H^1(\Omega) \subset H^{\varsigma}(\Omega)$, for $\varsigma < 1$, and the compactness result in \cite{Simon} ensure  the strong convergence in  $L^2((0,T)\times\partial \Omega)$. The boundedness of $\n^\ve$ and $b^\ve$, together with convergence in $L^2(\Omega_T)$,  implies the boundedness of $\n$  and $b$.
\end{proof}

Next we derive the macroscopic equations \eqref{macro_1}, \eqref{macro_bc_1}, \eqref{macro_2}, \eqref{macro_bc_2} for Model II, i.e.\ for equations  \eqref{sumbal},   \eqref{BC1}--\eqref{BC4}. 

\begin{proof}[\bf Proof of Theorem~\ref{th:macro_2}]  Since  the equations for $\p^\ve$ are the same in both microscopic problems Model~I and Model~II, the derivation of the  equations for $\p$ follows along the same line as in the proof of Theorem~\ref{th:macro_1}. Using the strong convergence of $b^\ve$, in the same way as in the proof of Theorem~\ref{th:macro_1},   we obtain the   equations for~$\bu$.

To derive  the macroscopic equations for $\n$ and $b$ we have to show the strong two-scale convergence of $\be(\bu^\ve)$ in order to pass to the limit in the nonlinear functions $\Q_n(\n^\ve, b^\ve, \be(\bu^\ve))$ and $Q_b(\n^\ve, b^\ve, \be(\bu^\ve))$.  Consider $\bu^\ve$ as a test function    in \eqref{weak_sol_u}. 
Then,   using the lower-semicontinuity of a norm, the positive definiteness of $\bbE^\ve(b^\ve,x)$, the uniform in $\ve$ boundedness of $b^\ve$ in $L^\infty(0, T; L^\infty(\Omega_{M}^\ve))$, together with the weak convergence of $\bu^\ve$ in $L^2(0,T; \cW(\Omega))$,  the two-scale  convergence of $\be(\bu^\ve)$,  and the strong convergence of $b^\ve$,  we obtain  
\begin{equation}
\begin{aligned}
&|\hat Y|^{-1}  \big\langle \bbE(b, y) (\be(\bu) + \hat  \be_{y}(\hat \bu)), \be(\bu) +\hat  \be_{y}(\hat \bu) \big\rangle_{\Omega_T, \hat Y} 
 \leq \liminf_{\ve \to 0 } \langle \bbE^\ve(b^\ve,x) \be(\bu^\ve), \be(\bu^\ve) \rangle_{\Omega_T} \\
&\leq \limsup_{\ve \to 0 } \langle \bbE^\ve(b^\ve,x) \be(\bu^\ve), \be(\bu^\ve) \rangle_{\Omega_T} \hspace{-0.1 cm }  =  \hspace{-0.05 cm} 
\lim\limits_{\ve \to 0} \big[\langle  \bff, \bu^\ve \rangle_{\Gamma_{\cE\cU, T}} \hspace{-0.15 cm }  - \langle  p_\cI \bnu, \bu^\ve \rangle_{\Gamma_{\cI, T}} \big]  \hspace{-0.1 cm} =  \hspace{-0.05 cm} 
\langle  \bff, \bu \rangle_{\Gamma_{\cE\cU, T} } \hspace{-0.15 cm }  - \langle  p_\cI \bnu, \bu \rangle_{\Gamma_{\cI, T}}. 
\end{aligned}
\end{equation}
Taking $\bu$ as a test function  in   \eqref{macro_1}
and $\hat \bu$ as a test function  in   \eqref{macro_23}, and using the definition of $\bbE_{\rm{hom}}$,  yields 
 \begin{equation*}
\begin{aligned}
 \langle  \bff, \bu \rangle_{\Gamma_{\cE\cU, T} }   - \langle  p_\cI \bnu, \bu \rangle_{\Gamma_{\cI, T}} =
|\hat Y|^{-1}\big \langle \bbE(b, y) (\be(\bu) + \hat \be_y(\hat \bu)), \be(\bu) + \hat \be_y(\hat \bu)\big \rangle_{\Omega_T, \hat Y}. 
 \end{aligned}
\end{equation*}
Hence we obtain that 
\begin{equation}\label{conver_two_strong_1}
\begin{aligned}
 \lim_{\ve \to 0 } \langle \bbE^\ve(b^\ve,x) \be(\bu^\ve), \be(\bu^\ve) \rangle_{\Omega_T} = |\hat Y|^{-1}  \big\langle \bbE(b, y) (\be(\bu) + \hat  \be_{y}(\hat \bu)), \be(\bu) +\hat  \be_{y}(\hat \bu) \big\rangle_{\Omega_T, \hat Y} 
\end{aligned}
\end{equation}
and, using the positive definiteness   of $\bbE$,  the strong convergence of $b^\ve$, and the lower semicontinuity of a norm, we conclude the strong two-scale convergence of $\be(\bu^\ve)$. Taking into account  the microscopic structure of   $\Omega_M^\ve$ and the definition of the unfolding operator,  from \eqref{conver_two_strong_1} we also have   
\begin{equation}\label{conver_strong_ener1}
\begin{aligned}
 |\hat Y|^{-1}\hspace{-0.1 cm } \lim\limits_{\ve \to 0 } \big \langle \bbE(\cT_\ve(b^\ve), y) \cT_\ve(\be(\bu^\ve)), \cT_\ve(\be(\bu^\ve) )\big \rangle_{\Omega_T,  \hat  Y}  \hspace{-0.1cm} = \hspace{-0.1 cm}  |\hat Y|^{-1}\big \langle \bbE(b, y) (\be(\bu) +\hat \be_y(\hat \bu)), \be(\bu) + \hat \be_y(\hat \bu) \big\rangle_{\Omega_T, \hat Y}. \hspace{-0.3 cm } 
 \end{aligned}
\end{equation}
To show the strong convergence of $\cT_\ve(\be(\bu^\ve))$ in $L^2(\Omega_T\times \hat Y)$ we consider 
\begin{equation}
\begin{aligned}
& \mathcal I_\ve = \left\langle \bbE( \cT_\ve(b^\ve), y)\big[ \cT_\ve(\be(\bu^\ve)) - \be(\bu) - \hat \be_y(\hat \bu)\big], \cT_\ve(\be(\bu^\ve)) - \be(\bu) - \hat \be_y(\hat \bu) \right\rangle_{\Omega_T, \hat Y}
\\  
& = \big\langle \bbE(\cT_\ve(b^\ve),y) \cT_\ve(\be(\bu^\ve)),  \cT_\ve(\be(\bu^\ve))\big\rangle_{\Omega_T, \hat Y} -\left \langle \bbE(\cT_\ve(b^\ve),y) \cT_\ve(\be(\bu^\ve)),  \be(\bu) +\hat \be_y(\hat \bu) \right\rangle_{\Omega_T, \hat Y}\\
&  -\left \langle \bbE(\cT_\ve(b^\ve),y)\big[  \be(\bu) +\hat  \be_y(\hat \bu)\big], \cT_\ve(\be(\bu^\ve)) \right \rangle_{\Omega_T, \hat Y}
 + \left \langle \bbE(\cT_\ve(b^\ve), y)\big[ \be(\bu) + \hat \be_y(\hat \bu)\big], \be(\bu) + \hat\be_y(\hat \bu) \right \rangle_{\Omega_T,  \hat Y}.
\hspace{-0.4 cm} \end{aligned}
\end{equation}
Then, the convergence in \eqref{conver_strong_ener1} and the strong ellipticity   of $\mathbb E$, together with the strong convergence of $b^\ve$ and weak convergence of $\mathcal T_\ve ( \be(\bu^\ve))$, ensured  by the two-scale convergence of $\be(\bu^\ve)$,   yield
\begin{equation*}
 \lim\limits_{\ve \to 0} \|\cT_\ve(\be(\bu^\ve)) - \be(\bu) - \hat \be_y(\hat \bu)\|_{L^2(\Omega_T\times \hat Y)}  
 \leq  \omega_E^{-1} \lim\limits_{\ve \to 0}  \mathcal I_\ve  = 0. 
\end{equation*}
The {\it a priori} estimates for   $\bu^\ve$, $\n^\ve$,  and $b^\ve$, see \eqref{estim_un_2} and \eqref{estim_un_3},   and the local Lipschitz continuity of $\Q_n$ and $Q_b$, see Assumption~\ref{assumptions}, ensure 
\begin{equation*}
\begin{aligned}
 \|\cT_\ve(\Q_n(\n^\ve, b^\ve, \be(\bu^\ve)) ) \|_{L^2(\Omega_T\times \hat Y_M)}  +   \|\cT_\ve(Q_b(\n^\ve, b^\ve, \be(\bu^\ve)) ) \|_{L^2(\Omega_T\times \hat Y_M)}& \leq C_1
    \end{aligned}
\end{equation*}
and 
\begin{equation*}
\begin{aligned}
& \|\cT_\ve(\Q_n(\n^\ve, b^\ve, \be(\bu^\ve)) )  -\Q_n(\n, b, \be(\bu) + \hat \be_y(\hat \bu)) \|_{L^1(\Omega_T\times \hat Y_M)} 
\\
& + \|\cT_\ve(Q_b(\n^\ve, b^\ve, \be(\bu^\ve)) )  - Q_b(\n, b, \be(\bu) + \hat \be_y(\hat \bu)) \|_{L^1(\Omega_T\times \hat Y_M)} 
\\& \leq  C_2(1+\|\be(\bu^\ve)\|_{L^2(\Omega_T)})\big[ \|\cT_\ve(\n^\ve) - \n\|_{L^2(\Omega_T\times \hat Y_M)}+\|\cT_\ve(b^\ve) - b\|_{L^2(\Omega_T\times \hat Y_M)}\big] \\
& +  C_3 \big[1+\|\n\|_{L^2(\Omega_{T})} +\|b\|_{L^2(\Omega_{T})} \big]\|\cT_\ve(\be(\bu^\ve)) - \be(\bu) - \hat \be_y(\hat \bu)\|_{L^2(\Omega_T\times \hat Y_M)},
 \end{aligned}
\end{equation*}
where the constants $C_1$, $C_2$ and $C_3$ are independent of $\ve$.  The   strong  convergence of $\cT_\ve(\be(\bu^\ve))$, $\n^\ve$,  and $\cT_\ve(b^\ve)$  implies 
\begin{equation*} 
\begin{aligned}
&\lim\limits_{\ve \to 0} \big \langle \cT_\ve(\Q_n(\n^\ve, b^\ve,  \be(\bu^\ve)) ),   \boldsymbol{\psi} \big \rangle_{\Omega_T, \hat Y_M} = 
\big \langle  \Q_n(\n, b, \be(\bu) + \hat \be_y(\hat \bu)),   \boldsymbol{\psi} \big \rangle_{\Omega_T, \hat Y_M}  \\
 &+ \lim\limits_{\ve \to 0}\big\langle  \cT_\ve(\Q_n(\n^\ve, b^\ve, \be(\bu^\ve)) )  -\Q_n(\n, b, \be(\bu) + \hat \be_y(\hat \bu)) ,  \boldsymbol{\psi} \big \rangle_{\Omega_T, \hat Y_M} 
  = \big \langle  \Q_n(\n, b, \be(\bu) +\hat  \be_y(\hat \bu)),   \boldsymbol{\psi} \big \rangle_{\Omega_T, \hat Y_M}  
 \end{aligned}
\end{equation*}
for all $\boldsymbol{\psi} \in C_0(\Omega_T \times \hat Y_M)^2$. A similar convergence result holds also for $\cT_\ve(Q_b(\n^\ve, b^\ve,  \be(\bu^\ve)) )$.  Hence, we conclude 
\begin{equation*} 
\begin{aligned}
\cT_\ve(\Q_n(\n^\ve, b^\ve, \be(\bu^\ve))) \rightharpoonup \Q_n(\n, b, \be(\bu) + \hat \be_y(\hat \bu)) \quad \text{ weakly  in } L^2(\Omega_T\times  \hat Y_M), \\
\cT_\ve(Q_b(\n^\ve, b^\ve, \be(\bu^\ve))) \rightharpoonup Q_b(\n, b, \be(\bu) + \hat \be_y(\hat \bu)) \quad \text{ weakly  in } L^2(\Omega_T\times  \hat Y_M)
 \end{aligned}
\end{equation*}
and, due to the relation between the two-scale convergence of a sequence and  the weak convergence of the unfolded sequence, see e.g.\  \cite{CDDGZ}, we  also obtain  
\begin{equation}\label{conv_Q}
\begin{aligned}
\Q_n(\n^\ve, b^\ve, \be(\bu^\ve)) \rightharpoonup \Q_n(\n, b, \be(\bu) + \hat \be_y(\hat \bu)) \qquad \text{ two-scale}, \\
Q_b(\n^\ve, b^\ve, \be(\bu^\ve)) \rightharpoonup Q_b(\n, b, \be(\bu) + \hat \be_y(\hat \bu)) \qquad \text{ two-scale}. 
\end{aligned}
\end{equation}
Considering $\boldsymbol{\phi}_n(t,x)= \boldsymbol{\varphi}_n(t,x) + \ve \boldsymbol{\psi}_n(t,x, \hat x/\ve)$ and 
$\phi_b(t,x) = \varphi_b(t,x) + \ve \psi_b( t,x, \hat x/\ve)$, 
with $\boldsymbol{\varphi}_n \in C^1(\overline \Omega_T)^2$,  ${\varphi}_b \in C^1(\overline \Omega_T)$, $\boldsymbol{\varphi}_n(T,x) ={\bf 0}$, $\varphi_b(T,x)=0$ for $x\in \overline \Omega$,   and  $\boldsymbol{\varphi}_n$, $\varphi_b$ are $a_3$-periodic in $x_3$,  $\boldsymbol{\psi}_n\in C^1_0(\Omega_T; C^1_{\rm per}(\hat Y))^2$, $\psi_b \in C^1_0(\Omega_T; C^1_{\rm per}(\hat Y))$,    as  test functions in  the equations for $\n^\ve$ and $b^\ve$ in  \eqref{weak_sol_n3}  and using the convergence results in Lemma~\ref{converg_2},  together with  the two-scale convergence of 
$\Q_n(\n^\ve, b^\ve, \be(\bu^\ve))$ and $Q_b(\n^\ve, b^\ve, \be(\bu^\ve))$,  see \eqref{conv_Q},      we obtain 
\begin{eqnarray}\label{macro_model_II_1}
\begin{aligned}
-\theta_M \langle  \n,  \partial_t\boldsymbol{\varphi}_n \rangle_{\Omega_T}  +\frac 1{|\hat Y|}  \langle D_n( \nabla \n + \hat \nabla_y \hat \n), \nabla \boldsymbol{\varphi}_n  + \hat \nabla_y \boldsymbol{\psi}_n \rangle_{\Omega_{T}, \hat Y_M}  = \theta_M \langle \n_{0}, \boldsymbol{\varphi}_n(0, \cdot)\rangle_{\Omega} + \langle \J_n(\n), \boldsymbol{\varphi}_n \rangle_{\Gamma_{\cE, T}}
   \\
 + \frac 1{ |\hat Y|}   \langle  \F_n(\p, \n)+  \Q_n(\n, b, \be(\bu)+ \hat \be_y(\hat \bu)), \boldsymbol{\varphi}_n \rangle_{\Omega_{T}, \hat Y_M}
+  \big\langle   \mathcal N^{\rm eff}_\delta(\be(\bu))\, \G(\n), \boldsymbol{\varphi}_n \big\rangle_{\Gamma_{\cI,T}}  , \\ 
-\theta_M \langle  b,  \partial_t\varphi_b \rangle_{\Omega_T}  + \frac 1{|\hat Y| }  \langle D_b( \nabla b + \hat \nabla_y \hat b), \nabla \varphi_b  + \hat \nabla_y \psi_b \rangle_{\Omega_{T}, \hat Y_M}= \theta_M \langle b_0, \varphi_b(0, \cdot) \rangle_{\Omega}  \\ +  \frac 1{|\hat Y|}  \big\langle Q_{b}(\n, b, \be(\bu)+ \hat \be_y(\hat \bu)) , \varphi_b \big\rangle_{\Omega_T,\hat Y_M},
\end{aligned}
\end{eqnarray}
where $\theta_M= |\hat Y_M|/|\hat Y|$ and $ \mathcal N^{\rm eff}_\delta(\be(\bu))$ is defined in \eqref{macro_over_n_eu}.  As in the proof of Theorem~\ref{th:macro_1}, choosing  $\boldsymbol{\varphi}_n \equiv {\bf 0}$ and   $ {\varphi}_b \equiv 0$  we obtain the unit cell problems  \eqref{unit_n},  \eqref{unit_b} and the macroscopic  diffusion coefficients $\mathcal D_n^l$ and $\mathcal D_b$, with $l=1,2$.  Taking $\boldsymbol{\psi}_n \equiv {\bf 0}$,  ${\psi}_b \equiv 0$ and considering
 first $\boldsymbol{\varphi}_n \in C^1_0(\Omega_T)^2$,  $\varphi_b \in C^1_0(\Omega_T)$ and 
then $\boldsymbol{\varphi}_n \in C^1_0(0,T; C^1(\overline \Omega))^2$, $\varphi_b \in C^1_0(0,T; C^1(\overline \Omega))$, with  $\boldsymbol{\varphi}_n$,  $\varphi_b$  being $a_3$-periodic in $x_3$,  we obtain the 
macroscopic equations and boundary conditions in \eqref{macro_bc_1}, \eqref{macro_2}, and \eqref{macro_bc_2}.
The equations \eqref{macro_2} and the regularity of $\n$, $b$ and $\bu$ imply  $\partial_t \n  \in L^2(0,T; \cV(\Omega)^\prime)^2$ and 
$\partial_t b  \in L^2(0,T; \cV(\Omega)^\prime)$.  
Thus  $\n \in C([0,T]; L^2(\Omega))^2$ and $b \in C([0,T]; L^2(\Omega))$, and using \eqref{macro_model_II_1} we obtain    $\n(t, \cdot) \to \n_0$ and $b(t, \cdot) \to b_0$  in $L^2(\Omega)$ as $t \to 0$. 

To show the uniqueness of a solution of the macroscopic problem  \eqref{macro_1}, \eqref{macro_bc_1},  \eqref{macro_2}, 
and \eqref{macro_bc_2} we consider  the  equations for the difference of two solutions.  Using the boundedness of $\p$ and the local Lipschitz continuity of $\F_{p}$ we obtain the uniqueness of a solution of the subsystem for $\p$.   In the same way as in the proof of Lemma~\ref{lemub} and Theorem~\ref{existence_3}  we obtain
 \begin{equation}\label{contr_macro_in}
\begin{aligned}
  \|\widetilde \n   \|_{L^\infty(0,\tau; L^2(\Omega))} + \|\nabla \widetilde \n  \|_{L^2(\Omega_{\tau})}+ \|\widetilde b \|_{L^\infty(0,\tau; L^2(\Omega))} + \|\nabla \widetilde  b  \|_{L^2(\Omega_{\tau})}  &\leq  C  \|\be(\widetilde\bu) \|_{L^2(\Omega_{\tau})},\\
\|\widetilde b \|_{L^\infty(0,\tau; L^\infty(\Omega))} &\leq C \|\be(\widetilde \bu)\|_{L^{1+1/\varsigma}(0,\tau; L^2(\Omega))}, \\
 \|\be(\widetilde\bu)\|_{L^{\infty}(0,\tau; L^2(\Omega))} &\leq C \|\widetilde b  \|_{L^\infty(0,\tau; L^\infty(\Omega))},  
 \end{aligned}
\end{equation}
for $0<\varsigma < 1/9$ and $\tau \in (0,T]$, where $\widetilde \n= \n^1-\n^2$,  $\widetilde b= b^1-b^2$,  $\widetilde \bu= \bu^1-\bu^2$, and   $(\n^1, b^1, \bu^1)$ and $(\n^2, b^2, \bu^2)$ are two solutions of the macroscopic problem  \eqref{macro_1}, \eqref{macro_bc_1},  \eqref{macro_2}, 
and \eqref{macro_bc_2}.  Then considering $\tau$ sufficiently small and iterating  over time-intervals yield  the uniqueness of a weak solution of  the coupled system \eqref{macro_1}, \eqref{macro_bc_1},  \eqref{macro_2}, 
and \eqref{macro_bc_2}  in $\Omega_T$.  The  inequalities \eqref{contr_macro_in} together with a fixed-point argument also ensure  the well-posedness of the macroscopic problem. 
\end{proof}

 \section{Analysis of  the microscopic equations with  the reaction terms depending on strain. } 
It is possible to  assume that the breakage of calcium-pectin cross-links depends on the strain instead of stress: 
 \begin{equation}\label{def_GN_1}
 \begin{aligned}
& \mathcal N_\delta (\be(\bu^\ve))(t,x) =  \Big(\dashint_{B_\delta(x)\cap \Omega}\text{tr } \be(\bu^\ve)(t, \tilde x) \, d\tilde x\Big)^{+}  \qquad \text{for all } \; x\in \overline\Omega \text{ and } t \in (0,T), \\
& P (b^\ve, \be(\bu^\ve)) =  \left(\text{tr } \be(\bu^\ve) \right)^{+}, 
\end{aligned}
\end{equation}
with $\Q_n(\n^\ve, b^\ve)= \Q(\n^\ve, b^\ve) P (b^\ve, \be(\bu^\ve))$, see \eqref{def_G_2}. In this situation the analysis of   the microscopic problems follows  along the same lines as in Theorems~\ref{existence_2}~and~\ref{existence_3}.  Moreover,  in this situation the growth assumption on $\mathbb E_M$ is not needed. 
In the macroscopic equations,  see Theorems~\ref{th:macro_1}~and~\ref{th:macro_2},  we will have 
$$
\mathcal N^{\rm eff}_\delta(\be(\bu))= \Big(\dashint_{B_\delta(x)\cap \Omega} {\rm tr } \, \be(\bu) \, d\tilde x \Big)^{+} \quad  \text{ and } \quad 
P^{\rm eff} (b, \mathbb W\be(\bu)) =  \left(\text{tr } \mathbb W \be(\bu) \right)^{+},
 $$
where $\mathbb W_{ijkl}(t,x,y) = \delta_{ik}\delta_{jl} + \big( \hat\be_y(\textbf{w}^{ij} (t,x,y)) \big)_{kl}$ and  $\textbf{w}^{ij}$ being  solutions of the unit cell problems \eqref{unit_u}. The proof of the  strong convergence of $\cT_\ve(b^\ve) $ in the case where $\mathcal N_\delta(\be(\bu^\ve))$ depends on the strain can be conducted in a different way.
\begin{lemma}\label{strong_nb_1} 
In the case of Model I,   \eqref{sumbal}--\eqref{sumbal2},  and $\mathcal N_\delta(\be(\bu^\ve))(t, x)=\Big[\dashint_{B_\delta(x)\cap \Omega }{\rm tr }\, \be(\bu^{\ve}(t, \tilde x)) d\tilde x\Big]^{+}$  for $x \in \overline \Omega$ and $t \in (0,T)$, we have, up to a subsequence,  
$$
\cT_\ve(b^\ve)  \to b  \quad  \text{ strongly}  \text{ in } L^2(\Omega_T\times \hat Y_M)  \qquad  \text{ as } \ve \to 0.
$$
\end{lemma}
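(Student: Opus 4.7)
The plan is to follow the same overall strategy as in Lemma~\ref{strong_nb_2}: extend $b^\ve$ from $\Omega_M^\ve$ to $\Omega$ through the ordinary differential equation, establish space and time shift estimates uniformly in $\ve$, apply the Kolmogorov compactness theorem to extract a strongly convergent subsequence in $L^2(\Omega_T)$, and then transfer the conclusion to $\cT_\ve(b^\ve)$ via the standard relation between strong convergence and the unfolding operator. The simplification relative to Lemma~\ref{strong_nb_2} is that, with the new choice \eqref{def_GN_1}, $\mathcal N_\delta(\be(\bu^\ve))$ depends only on $\bu^\ve$ and not on $b^\ve$, so the shift in the ``forcing'' of the ODE does not feed back through the elasticity tensor.

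First, using the extension of $\n^\ve$ from Lemma~\ref{lem:extension} and defining $b^\ve$ on all of $\Omega$ by solving \eqref{extend_n_b} with the extended $\n^\ve$, the assumptions on $R_b$ together with the uniform boundedness of $\n^\ve$ and $\mathcal N_\delta(\be(\bu^\ve))$ give $\|b^\ve\|_{L^\infty(\Omega_T)} + \|\partial_t b^\ve\|_{L^\infty(\Omega_T)}\leq C$. Next I would derive the key spatial shift estimate: considering the equation for $b^\ve(t,x+\mathbf h_j)-b^\ve(t,x)$ and using the local Lipschitz continuity of $R_b$ on the bounded range of $(\n^\ve,b^\ve,\mathcal N_\delta(\be(\bu^\ve)))$,
\begin{equation*}
\|b^\ve(\tau,\cdot+\mathbf h_j)-b^\ve(\tau,\cdot)\|_{L^2(\Omega_{2h})}^2
\leq C\int_0^\tau\bigl[\|\n^\ve(t,\cdot+\mathbf h_j)-\n^\ve(t,\cdot)\|_{L^2(\Omega_{2h})}^2+\|b^\ve(t,\cdot+\mathbf h_j)-b^\ve(t,\cdot)\|_{L^2(\Omega_{2h})}^2\bigr]dt
\end{equation*}
\begin{equation*}
+\|b_0(\cdot+\mathbf h_j)-b_0(\cdot)\|_{L^2(\Omega_{2h})}^2+C\int_0^\tau\|\mathcal N_\delta(\be(\bu^\ve))(t,\cdot+\mathbf h_j)-\mathcal N_\delta(\be(\bu^\ve))(t,\cdot)\|_{L^2(\Omega_{2h})}^2 dt.
\end{equation*}
The $\n^\ve$ shift is bounded by $Ch^2\|\nabla\n^\ve\|_{L^2(\Omega_T)}^2$ via the extension lemma and $\nabla\n^\ve\in L^2$, and the $b_0$ shift by $Ch^2\|\nabla b_0\|_{L^2(\Omega)}^2$ since $b_0\in H^1(\Omega)$. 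For the $\mathcal N_\delta$ shift the averaging yields the sharp bound
\begin{equation*}
\|\mathcal N_\delta(\be(\bu^\ve))(t,\cdot+\mathbf h_j)-\mathcal N_\delta(\be(\bu^\ve))(t,\cdot)\|_{L^2(\Omega_{2h})}^2
\leq C_\delta\, h\,\|\be(\bu^\ve)(t))\|_{L^2(\Omega)}^2,
\end{equation*}
obtained by writing the difference as $|B_\delta|^{-1}\int\text{tr}\,\be(\bu^\ve)[\chi_{B_\delta(x+\mathbf h_j)\cap\Omega}-\chi_{B_\delta(x)\cap\Omega}]\,d\tilde x$, applying Cauchy--Schwarz and using $|B_\delta(x+\mathbf h_j)\bigtriangleup B_\delta(x)|\leq C\delta^2 h$ together with Fubini. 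Gronwall's inequality then gives $\sup_{t\in(0,T)}\|b^\ve(t,\cdot+\mathbf h_j)-b^\ve(t,\cdot)\|_{L^2(\Omega_{2h})}^2\leq C_\delta h$.

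The time shift estimate is immediate from $\partial_t b^\ve\in L^\infty(\Omega_T)$, giving $\|b^\ve(\cdot+h,\cdot)-b^\ve(\cdot,\cdot)\|_{L^2((0,T-h)\times\Omega)}^2\leq C h^2$, and the near-boundary contribution after extending by zero is controlled by the $L^\infty$ bound on $b^\ve$ as in the proof of Lemma~\ref{strong_nb_2}. Combining all three estimates, Kolmogorov's compactness theorem yields a subsequence $b^\ve\to\tilde b$ strongly in $L^2(\Omega_T)$, and the two-scale convergence of $b^\ve$ identifies $\tilde b=b$ with $b$ independent of $y$. Finally, the standard property that strong $L^2(\Omega_T)$-convergence of $v^\ve$ to a $y$-independent limit $v$ implies $\cT_\ve(v^\ve)\to v$ strongly in $L^2(\Omega_T\times\hat Y_M)$ (see \cite{CDG,CDDGZ}) delivers the conclusion. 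The only nontrivial step is the $\mathcal N_\delta$ shift estimate, but the strain-only definition \eqref{def_GN_1} makes this purely an averaging calculation in $x$ that does not couple back to $b^\ve$, so no Lipschitz hypothesis on $\mathbb E_M$ is required.
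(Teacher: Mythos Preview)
Your proof is correct, but it takes a genuinely different route from the paper's own proof of this lemma. You reuse the Kolmogorov compactness strategy of Lemma~\ref{strong_nb_2}: extend $b^\ve$ to $\Omega$ via the ODE, establish uniform space and time shift estimates, and pass to a strong limit in $L^2(\Omega_T)$ before transferring to $\cT_\ve(b^\ve)$. You correctly observe that with the strain-only definition~\eqref{def_GN_1} the averaging term does not feed back through $\bbE_M(b^\ve)$, so the spatial shift of $\mathcal N_\delta(\be(\bu^\ve))$ is a pure symmetric-difference estimate and no Lipschitz assumption on $\bbE_M$ is needed. All steps are sound.

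The paper, by contrast, proves Lemma~\ref{strong_nb_1} by showing directly that $\{\cT_\ve(b^\ve)\}$ is a Cauchy sequence in $L^2(\Omega_T\times\hat Y_M)$. It applies the unfolding operator to the ODE, subtracts the equations for two scales $\ve_m,\ve_k$, and decomposes the right-hand side into three pieces controlled respectively by the already-established strong convergence of $\n^\ve$ (Lemma~\ref{converg_1}), the strong convergence of the averaged strain~\eqref{conv_int_eu_2}, and the uniform bound on $\mathcal N_\delta(\be(\bu^\ve))$; Gronwall then closes the argument. This avoids the extension of $b^\ve$ to $\Omega$ and the Kolmogorov theorem entirely, at the price of relying on prior convergence results rather than only a~priori estimates. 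Your approach is more self-contained and parallels Lemma~\ref{strong_nb_2} closely; the paper's approach exploits the strain-only structure to give an alternative, somewhat shorter argument that works directly at the level of the unfolded sequence.
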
 
\begin{proof}
To show the strong convergence of $\cT_\ve(b^\ve)$ in $L^2(\Omega_T\times \hat Y_M)$, we  prove that $\cT_\ve(b^\ve)$ is a Cauchy sequence in  $L^2(\Omega_T\times \hat Y_M)$. 
In  the proof of  the Cauchy property for the sequence $\cT_\ve(b^\ve)$ we shall use the Gronwall inequality. Thus,  we consider equations for $b^\ve$ in $(0,\tau)\times \Omega_M^\ve$, for any $\tau \in (0,T]$.  

Applying  the unfolding operator $\cT_\ve$ to the equation for $b^\ve$ in  \eqref{sumbal_11}  and  taking $\mathcal T_{\ve_m}(b^{\ve_m}) - \mathcal T_{\ve_k}(b^{\ve_k})$ as a test function in the  difference of the equations for $\mathcal T_{\ve_m}(b^{\ve_m}) $ and $\mathcal T_{\ve_k}(b^{\ve_k})$ yield
\begin{equation*}
\begin{aligned}
&  \|\mathcal T_{\ve_m}(b^{\ve_m})(\tau)  - \mathcal T_{\ve_k}(b^{\ve_k})(\tau)\|^2_{L^2(\Omega\times \hat Y_M)}=  \|\mathcal T_{\ve_m}(b_0)  - \mathcal T_{\ve_k}(b_0)\|^2_{L^2(\Omega\times \hat Y_M)}+ 2(I_1+I_2+I_3), 
 \end{aligned}
\end{equation*}
where 
\begin{equation*}
\begin{aligned}
&I_1= \\
&\langle R_{b}(\cT_{\ve_m}(\n^{\ve_m}), \mathcal T_{\ve_m}(b^{\ve_m}), \cT_{\ve_m}\mathcal N_\delta(\be(\bu^{\ve_m})) )  - R_{b}(\cT_{\ve_k}(\n^{\ve_k}),  \mathcal T_{\ve_m}(b^{\ve_m}),\cT_{\ve_m}\mathcal N_\delta(\be(\bu^{\ve_m}))), \delta^{\ve_{m,k}}\mathcal T_{\ve}(b^{\ve}) \rangle_{\Omega_\tau\times \hat Y_M},
\\
&I_2= \hspace{-0.05 cm} \langle R_{b}(\cT_{\ve_k}(\n^{\ve_k}), \mathcal T_{\ve_k}(b^{\ve_k}), \cT_{\ve_m}\mathcal N_\delta(\be(\bu^{\ve_m})) )  - R_{b}(\cT_{\ve_k}(\n^{\ve_k}),  \mathcal T_{\ve_k}(b^{\ve_k}),\cT_{\ve_k}\mathcal N_\delta(\be(\bu^{\ve_k}))), \delta^{\ve_{m,k}}\mathcal T_{\ve}(b^{\ve}) \rangle_{\Omega_\tau\times \hat Y_M}, \\
&I_3=\\
&\langle R_{b}(\cT_{\ve_k}(\n^{\ve_k}), \mathcal T_{\ve_m}(b^{\ve_m}), \cT_{\ve_m}\mathcal N_\delta(\be(\bu^{\ve_m})) )  - R_{b}(\cT_{\ve_k}(\n^{\ve_k}),  \mathcal T_{\ve_k}(b^{\ve_k}),\cT_{\ve_m}\mathcal N_\delta(\be(\bu^{\ve_m}))), \delta^{\ve_{m,k}}\mathcal T_{\ve}(b^{\ve}) \rangle_{\Omega_\tau\times \hat Y_M},
 \end{aligned}
\end{equation*}
with  $\delta^{\ve_{m,k}}\mathcal T_{\ve} (b^{\ve})=  \mathcal T_{\ve_m}(b^{\ve_m}) - \mathcal T_{\ve_k}(b^{\ve_k})$ and $\tau \in (0, T]$. 
Using the strong convergence of $\n^\ve$  in $L^2(\Omega_T)$, the assumptions on $R_b$,  and the  boundedness of $\n^\ve$ and $b^\ve$ for the first term  we have 
\begin{equation*}
\begin{aligned}
|I_1| \leq  \sigma_1(\ve_m, \ve_k)  + \|\mathcal T_{\ve_m}(b^{\ve_m}) - \mathcal T_{\ve_k}(b^{\ve_k})\|^2_{L^2(\Omega_\tau\times\hat  Y_M)},
 \end{aligned}
\end{equation*}
where $\sigma_1(\ve_m, \ve_k) \to 0$ as $\ve_m, \ve_k \to 0$. The boundedness of $\n^\ve$ and $b^\ve$   yield
\begin{multline*}
|I_2|
\leq C\big[\| \cT_{\ve_m} (\mathcal N_\delta(\be(\bu^{\ve_m})))  - \cT_{\ve_k}( \mathcal N_\delta(\be(\bu^{\ve_k}))) \|^2_{L^2(\Omega_\tau\times \hat Y_M)}  
+ \|\mathcal T_{\ve_m}(b^{\ve_m}) - \mathcal T_{\ve_k}(b^{\ve_k}) \|^2_{L^2(\Omega_\tau\times \hat Y_M)}\big] \\ = C (I_{21}+ I_{22}) .
\end{multline*}
Using the properties of the unfolding operator $\cT_\ve$, i.e.\   $\cT_\ve (\phi) \to \phi$ strongly in $L^2(\Omega\times \hat Y)$ for $\phi \in L^2(\Omega)$ and $\|\cT_\ve (\phi) \|^2_{L^2(\Omega \times \hat Y)} \leq |\hat Y| \| \phi \|^2_{L^2(\Omega)}$,  we have
\begin{equation*}
\begin{aligned}
I_{21}
&\leq 
\sum_{j=m,k}\Big[\| \cT_{\ve_j} \mathcal N_\delta(\be(\bu^{\ve_j}))  -\cT_{\ve_j}  \mathcal N_\delta(\be(\bu)) \|^2_{L^2(\Omega_\tau\times \hat Y)}+ \| \cT_{\ve_j} \mathcal N_\delta(\be(\bu))  - \mathcal N_\delta(\be(\bu)) \|^2_{L^2(\Omega_\tau\times \hat Y)}\Big] \\
&\leq |\hat Y|\sum_{j=m,k}\| \mathcal N_\delta(\be(\bu^{\ve_j}))  - \mathcal N_\delta(\be(\bu)) \|^2_{L^2(\Omega_\tau)} + \sigma_2(\ve_m, \ve_k),
 \end{aligned}
\end{equation*}
where $\sigma_2(\ve_m, \ve_k) \to 0$ as $\ve_m, \ve_k \to 0$. Applying the weak convergence of $\be(\bu^\ve)$ and the strong convergence of $\int_{B_\delta(x)\cap \Omega}\be(\bu^{\ve}) d\tilde x$, see \eqref{conv_int_eu_2},  yields 
\begin{equation*}
\begin{aligned}
&\| \mathcal N_\delta(\be(\bu^{\ve_j}))  - \mathcal N_\delta(\be(\bu)) \|^2_{L^2(\Omega_\tau)}   \leq 
\int_0^\tau\int_\Omega \Big|\dashint_{B_\delta(x)\cap \Omega} \big[\be(\bu^{\ve_j}(t,\tilde x)) - \be(\bu(t,\tilde x))\big] d\tilde x\Big|^2 dx dt 
\leq C \delta^{-6}\sigma_3(\ve^j),
 \end{aligned}
\end{equation*}
where $\sigma_3(\ve_j) \to 0$ as $\ve_j \to 0$, with $j=m,k$.
We estimate  $I_3$ as 
\begin{equation*}
\begin{aligned}
|I_3| &\leq  C \big[1+\sup_{\Omega_T} \mathcal N_\delta(\be(\bu^{\ve_m}))\big] \|\mathcal T_{\ve_m}(b^{\ve_m}) - \mathcal T_{\ve_k}(b^{\ve_k}) \|^2_{L^2(\Omega_\tau\times \hat Y_M)} .
 \end{aligned}
\end{equation*}
Combining  the estimates for $I_1$, $I_{21}$, and $I_3$  and applying the Gronwall inequality we obtain 
$$
\sup\limits_{(0,T)}\|\mathcal T_{\ve_m}(b^{\ve_m}) - \mathcal T_{\ve_k}(b^{\ve_k}) \|^2_{L^2(\Omega\times \hat Y_M)} 
\leq \sigma_4(\ve_m, \ve_k), 
$$
where $\sigma_4(\ve_m, \ve_k) \to 0$ as $\ve_m, \ve_k \to 0$. Hence, we have that  $\{\mathcal T_{\ve}(b^{\ve})\}$ converges strongly  in $L^2(\Omega_T\times \hat Y_M)$.

The macroscopic equation for $b$ implies  that $b$ is independent of the microscopic variables $y \in \hat Y_M$. 
\end{proof}

\section{On the computation of the effective  elasticity tensor}\label{numerics} 

The macroscopic equations allow for the development of efficient numerical simulations of the models discussed above.  To solve the macroscopic problem  numerically, the first step is to obtain the macroscopic effective  diffusion coefficients and elasticity tensor.  To compute the effective  diffusion coefficients, one must solve the unit cell problems \eqref{unit_n} and \eqref{unit_b}. Since  in    \eqref{unit_n} and \eqref{unit_b} we have  Poisson equations defined on a unit cell $\hat Y_M$ and are independent of the macroscopic variables, standard numerical methods can be applied.  Solving \eqref{unit_u} to determine the effective elasticity tensor is more involved, since  the unit cell problems in \eqref{unit_u}  depend on the  microscopic variables $y\in \hat Y_M$ as well as on the macroscopic variables $x$ and the time variable $t$ through the density of the calcium-pectin cross-links $b$.  Thus, in general the system \eqref{unit_u} must be solved for each possible value of $b(t,x)$ for $(t,x) \in \Omega_T$.  However, under  physically reasonable assumptions on the elasticity tensor for the cell wall matrix, it can be shown that it is sufficient to solve \eqref{unit_u} for only two  values of $b$.

If the cell wall matrix is isotropic, then the effective elasticity tensor depends linearly on the Young's modulus of the wall matrix.  To see this, consider the definition of the homogenized elasticity tensor 
\beqn\label{homEl}
\bbE_{\text{hom},ijkl}(b)=\dashint_{\hat Y} \big[\bbE_{ijkl}(b,y)+\big(\bbE(b,y)\hat\be_y(\bw^{ij}(b))\big)_{kl}\big]dy,
\eeqn
where 
$$
\bbE(b,y)=\bbE_M(b)\chi_{Y_M}(y) + \bbE_F\chi_{Y_F}(y)
$$
and 
$\bw^{ij}$ are the unique solutions of the unit cell problems  \eqref{unit_u}.  Experiments suggest that only the Young's modulus $E=E(b)$ depends on the density of the calcium-pectin cross-links $b$, see e.g. \cite{ZMSR}.  Since the cell wall matrix is isotropic, the elasticity tensor of the cell wall matrix depends linearly on the Young's modulus, and thus is of the form
$$
\bbE_M(b)= \widetilde \bbE_M(E(b))=E(b)\bbE_1+\bbE_0. 
$$
 To prove that $\bbE_{\rm hom}$ also depends linearly on $E(b)$, we shall show that there exist $\bbE_{{\rm hom},1}$ and $\bbE_{{\rm hom},0}$ such that
\begin{equation}\label{claim}
\bbE_{{\rm hom}}(b)=E(b)\bbE_{{\rm hom},1}+\bbE_{{\rm hom},0}.
\end{equation}
For any positive numbers $\alpha$ and $\beta$, we define
$$
\bW_{kl}(\alpha, \beta)=\big(\widetilde \bbE(\alpha+\beta,y)\hat\be_y(\bw^{ij}(\alpha+\beta))\big)_{kl}-\big(\widetilde\bbE(\alpha,y)\hat\be_y(\bw^{ij}(\alpha))\big)_{kl}.
$$
It follows from \eqref{homEl} that for \eqref{claim} to hold it is sufficient to show that $\int_{\hat Y} \bW(\alpha,\beta) dy$ is linear in $\beta$ and independent of $\alpha$. It follows from  equations    \eqref{unit_u} and the $\hat Y$-periodicity of  $\bw^{ij}$ and $\bbE(b, \cdot)$  that  $\bW(\alpha,\beta)$ is $\hat Y$-periodic  and satisfies
\beqn\label{funitcell}
\hat{\text{div}}_y(\bW(\alpha,\beta)+\beta \bbE_1\bb^{ij}\chi_{\hat{Y}_M})=\textbf{0}  \quad  \text{in}\; \hat Y.
\eeqn
The Helmholtz--Hodge decomposition theorem for $L^2(\hat Y)^{3 \times 3}$ implies that $\bW(\alpha, \beta)$ has a unique representation
$$
\bW(\alpha,\beta)=(\hat{\text{curl}}_y\, \bU(\alpha,\beta))^{\rm T}+\bZ(\alpha,\beta),
$$
where $(\hat{\text{curl}}_y\, \bU(\alpha,\beta))^{\rm T}$ and $\bZ(\alpha, \beta)$ are $L^2$-orthogonal  periodic matrix functions, see e.g.\  \cite{MB_2001}.  Here $(\hat{\text{curl}}_y\bU)_{ij} =\epsilon_{i1k} \partial_{y_1}\bU_{jk}+\epsilon_{i2k} \partial_{y_2}\bU_{jk}$ with summation  over $k=1,2,3$ and $\epsilon$ beeing  the three dimensional  Levi-Civita symbol. Then  
 $\bZ(\alpha,\beta)$ is the unique $\hat Y$-periodic solution of
$$
\hat{\text{div}}_y(\bZ(\alpha,\beta)+\beta \bbE_1\bb^{ij}\chi_{\hat{Y}_M})=\textbf{0} \quad  \text{in}\; \hat Y, 
$$
that is orthogonal to the kernel of $\hat{\text{div}}_y$. Therefore $\bZ$ is  a linear function of $\beta$ and is independent of $\alpha$.  Moreover, it follows from the periodicity of $\bU$ that $\int_{\hat Y} (\hat{\text{curl}}_y\, \bU(\alpha,\beta))^{\rm T} dy = \textbf{0}$. Thus,  $\int_{\hat Y} \bW(\alpha,\beta) \, dy$ is independent of $\alpha$ and is linear in $\beta$.

Since $\bbE_\text{hom}$ is a linear function of the Young's modulus of the cell wall matrix, knowing $\bbE_\text{hom}$ for two different values of this modulus completely determines $\bbE_\text{hom}$.  Thus, given the Young's modulus of the cell wall matrix as a function of the calcium-pectin cross-link density, the macroscopic elasticity tensor can be computed for different cross-link densities by solving the unit cell problems \eqref{unit_u} for only two values of $b$.

As an example, we compute $\bbE_\text{hom}$ numerically for $E= 10$ MPa, which corresponds to $b=2.48\, \mu$M when $E(b)=0.775\, b+8.08$, see  \cite{ZMSR}.  Since the cell wall matrix is assumed to be isotropic, it suffices to specify the Poisson's ratio to completely determine the elasticity tensor of the wall matrix.  Here, we consider the Poisson's ratio to be equal to $0.3$, a common value for biological materials.  The microfibrils are transversely isotropic \cite{DMKM} and, hence, the elasticity tensor is determined by specifying five parameters: the Young's modulus $E_F$ associated with the directions lying perpendicular to the fibril, the Poisson's ratio $\nu_{F1}$ which characterizes the transverse reduction of the plane perpendicular to the microfibril for stress lying in this plane, the ratio $n_F$ between $E_F$ and the Young's modulus associated with the direction of the fibril, the Poisson's ratio $\nu_{F2}$ governing the reduction in the plane perpendicular to the micofibril for stress in the direction of the microfibril, and the shear modulus $Z_F$ for planes parallel to the fibril.  These parameters are assigned the values
$$
E_F = 15000\, \text{MPa},\ \nu_{F1}= 0.3,\ n_F = 0.068,\ \nu_{F2}=0.11,\ Z_F = 84842\, \text{MPa},
$$
which are chosen based on experimental results \cite{ZMSR}  and to ensure that the elasticity tensor for the microfibrils is strongly elliptic.
The computations involved the unit cell $\hat Y=(0,1)^2$ with  
$$
\hat Y_F = \{ (y_1,y_2)\in \hat Y\ |\ (y_1-0.5)^2+(y_2-0.5)^2 < 0.25^2\}.
$$
The unit cell problem \eqref{unit_u} was solved using FEniCS  \cite{fenics, dolfin, ffc}.  This involved the discretization of the domain $\hat Y$ using a nonuniform mesh with 15,991,809 vertices that had a higher density of vertices near the boundary between the cell wall matrix and the microfibrils.  The linear system, obtained  using the continuous Galerkin finite element method, was solved using the general minimal residual method with an algebraic multigrid preconditioner.

\begin{table}
\begin{center}
$$
{\bf C}=
\begin{pmatrix}
21.2 & 8.9 & 23.3 & 0 & 0 & 0\\
8.9 & 21.2 & 23.3 & 0 & 0 & 0\\
23.3 & 23.3 & 43367.5 & 0 & 0 & 0\\
0 & 0 & 0 & 14 & 0 & 0 \\
0 & 0 & 0 & 0 & 14 & 0 \\
0 & 0 & 0 & 0 & 0 & 5.7
\end{pmatrix}
$$
\end{center}
\caption{Effective macroscopic elasticity tensor for  the Young's modulus ${\rm E} = 10$ MPa.}
\label{table1}
\end{table}

Table~\ref{table1} shows the computed effective elasticity tensor expressed in Voigt notation.  When a larger value of $b$ is considered, the components of the resulting effective elasticity tensor are larger.  As can be seen, the macroscopic elasticity tensor possesses tetragonal symmetry. This is in agreement  with a general result on the symmetry of the effective coefficients, see \cite{PS_2015}.  The ${\bf C}_{33}$ component is several orders of magnitude larger than the other components of $\bC$ since it describes the resistivity of the cell wall to being stretched in the direction parallel to the microfibrils, and the microfibrils are much stiffer than the cell wall matrix.

\section{Derivation of the mathematical model}\label{sectdermodel}
In this section   Model I for  plant cell wall biomechanics is derived. The derivation of  Model II follows along the same lines.  We will   emphasise the differences between Model I and Model II at the end of the section.  

The primary wall of a plant cell consists mainly of oriented cellulose microfibrils, pectin, hemicellulose, structural proteins, and water, see Fig.~\ref{fig11}. The cross-linked pectin network is the main composite of the middle lamella which joins individual cells together. The main force for cell elongation (turgor pressure) acts isotropically, and so it is the microscopic structure of the cell wall which determines the anisotropic growth of plant cells and tissue. The orientation of microfibrils, their length, high tensile strength, and interaction with cell  wall matrix macromolecules strongly influence the wall's stiffness. Hemicelluloses  form hydrogen bonds with the surface of cellulose microfibrils, which may strengthen the cell wall by creating a microfibril-hemicellulose network, but also
 weaken the mechanical strength of cell walls by preventing cellulose aggregation \cite{Somerville}.   Pectin is deposited into cell walls in a methylesterified form, where it   can be modified  by the enzyme pectin methylesterase (PME), which removes methyl groups by breaking ester bonds. The de-esterified pectin  is able to form  calcium-pectin cross-links, and so stiffen the cell wall and reduce its expansion, see e.g.~\cite{WG,ZMSR}. 
 
 Thus, the biomechanics of plant cell walls is determined by the cell wall microstructure, given by   the microfibrils, and  the physical  properties of the cell
wall matrix. There are a number of models of a plant cell wall, each of which focuses on different aspects of its structure.
Mathematical models of the  cellulose-hemicellulose network were proposed in \cite{DBJ,PF}.  The account of the microstructure of a cell wall has been addressed by considering the anisotropic yield stresses or by
distinguishing between the free energies related to the elasticity of (i) macromolecules and hydrogen bonds or (ii)   the matrix and microfibrils \cite{Chaplain,Dumais,VC}.
 The influence of the microfibril orientation and the external torque on the expansion process has been considered in \cite{DJ}. The effect of changes in the chemical configurations of  pectins (methylesterified and demethylesterified) and  the calcium concentration on the viscous behavior of a cell wall in a pollen tube has been analyzed  in \cite{KZG,RHD}.

\begin{figure}
\begin{center}
\includegraphics[width=5.6 cm]{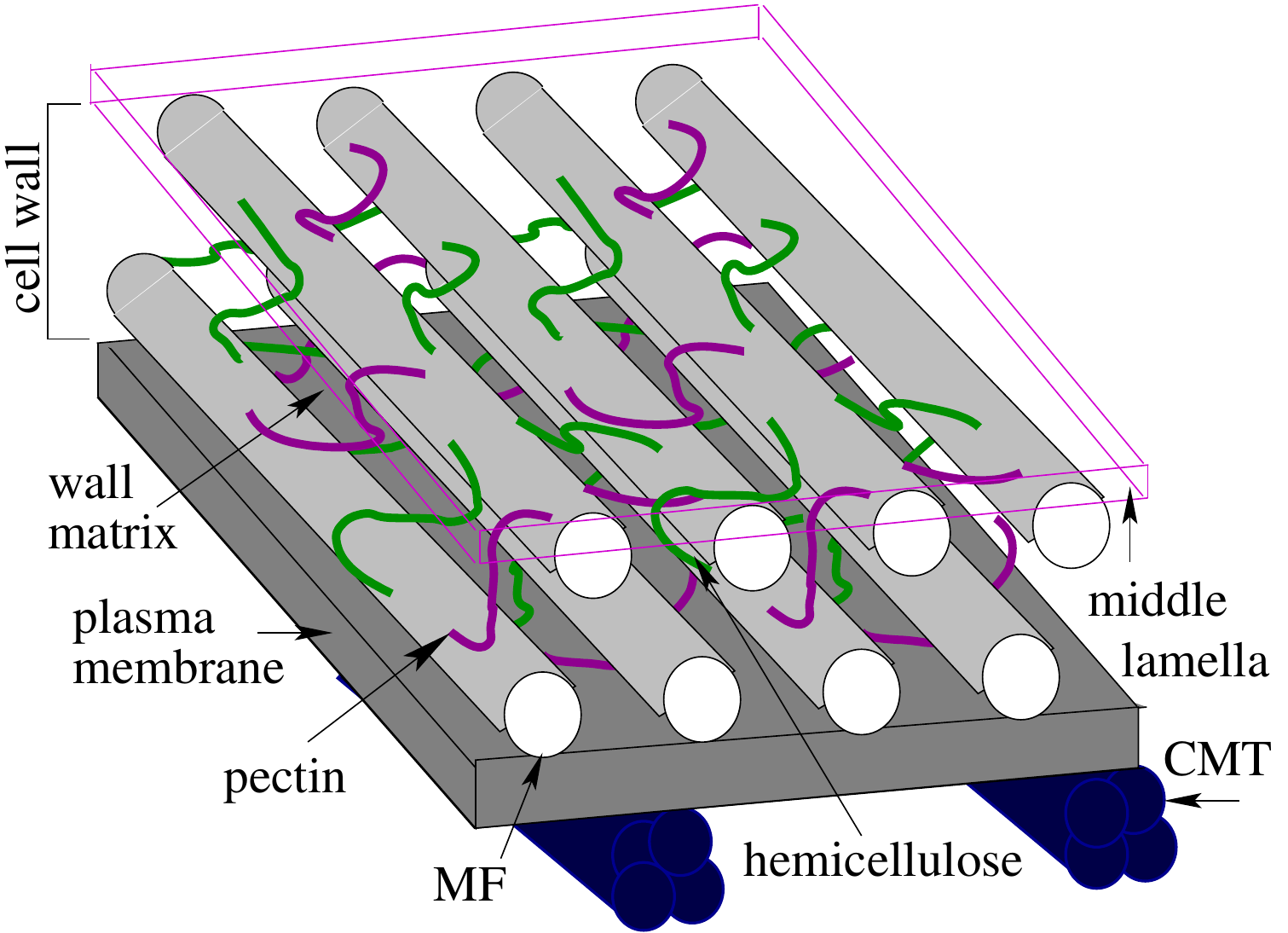}
\caption{Schematic diagram of a plant cell wall. MF denotes cell wall  microfibrils, CMT denotes cortical microtubules in a plant cell.}\label{fig11}
\end{center}
\end{figure}

In our model  we focus on two aspects  which have not been considered together before:  the influence of the microstructure, associated with the cellulose microfibrils,  and  of the calcium-pectin cross-links on the mechanical properties of  plant  cell walls.  In the  microscopic model of cell wall biomechanics derived  here, the cell wall microstructure and   the dynamics of the formation and dissociation  of calcium-pectin cross-links are considered explicitly.

   It is supposed that calcium-pectin cross-linking chemistry is one of the main regulators of cell wall elasticity and extension \cite{WHH}.
It has been shown that   the    modification of pectin  by PME and  the control of the amount of  calcium-pectin cross-links  greatly influence the  mechanical deformations of plant cell walls \cite{P2011,P2010},  and the interference with PME activity causes dramatic changes in growth behavior of plant cells and tissues \cite{Wolf}.  We  consider the most abundant  subclass of pectin, homogalacturonan, which is important for the regulation of plant biomechanics and growth.  Homogalacturonan  consists of a long linear chain of galacturonic acids. Pectin is deposited into the cell wall in a highly methylestrified state and then  is modified by the  enzyme pectin-methylesterase (PME), which removes methyl groups \cite{WG}.   The demethylesterified pectin  interacts with calcium ions to produce load bearing cross-links, which reduce cell wall expansion, see e.g.~\cite{WHH}.
 
In the mathematical model  a flat section of a cell wall composed of a polysaccharide matrix and  cellulose microfibrils is considered. Let $\Omega$ be a reference configuration of  the plant cell wall.  The domains $\Omega_M$ and $\Omega_F$ denote the parts of $\Omega$ occupied by the cell wall matrix and microfibrils, respectively.  

We  consider  five species within the  plant  cell  wall  matrix: methylestrified pectin, the enzyme PME, demethyl-estrified pectin, calcium ions, and calcium-pectin cross-links.   
 To form cross-links  with calcium ions Ca$^{2+}$, pectin molecules  need to have only some of their constituent acids  de-estrified, see e.g.~\cite{C,WG}.  Thus, when describing the density of pectin in the different states, we refer to the density of  the galacturonic acid groups in the different states.  

  Let $n_e$, $n_E$, $n_d$, $n_c$, and $n_b$ denote the number densities   of  methylestrified  pectin acid groups,  PME enzyme,  demethylestrified pectin acid groups, calcium ions, and  calcium-pectin cross-links in the reference configuration  $\Omega_M$, respectively.   Let $S=\{e,E, d,c,b\}$ be an index set and  $n_S$ will denote all five of the  densities. 
  We assume that  the densities $n_\alpha$, with $\alpha\in S$, are changing due to  spacial movement, reactions between the species, and external agencies.  
Thus, the balance equation  for  $n_\alpha$ is given by
\beqn\label{LSB}
\partial_t n_\alpha=r_\alpha-\text{div}\,\bj_\alpha+h_\alpha\qquad \text{in} \; \;  \Omega_M, \qquad \qquad \alpha\in S, 
\eeqn
where $r_\alpha$ models the chemical reactions between the species, $\bj_\alpha$ is the flux,  and  $h_\alpha$ is the species supply due to external agencies.  
The momentum balance for the cell wall reads
\beqn\label{LFB}
\textbf{0}=\text{div}\,\bT_\text{R}+\bb\qquad \text{in}\ \Omega,
\eeqn
where $\bT_R$ is the Piola stress and $\bb$ denotes  the external body forces, including inertial forces.
We consider    elastic behavior  of the  wall material and assume that the chemical processes  in the  wall matrix influence the mechanical properties of  the cell walls, see e.g.~\cite{C,WG}. 
The  constitutive law of linear elasticity, see e.g.~\cite{Ciarlet},  for the stress is assumed:
\beqn\label{conlaw1}
\bT_\text{R}= \big(\bbE_M(n_S)\chi_{\Omega_M} + \bbE_F \chi_{\Omega_F}\big)\be(\bu),
\eeqn
 where $\bbE_M(n_S)$ and $\bbE_F$ are elasticity tensors for cell wall matrix and microfibrils, respectively, and $\be(\bu)=\half(\nabla\bu+\nabla\bu^{\rm T})$ is the symmetric part of the displacement gradient, $\chi_A$ is the characteristic function of a domain~$A$. 

The   interactions between the mechanical properties of the cell wall  and the biochemistry of the wall matrix  are also reflected  in the reactions terms
\beqn\label{conlaw2}
r_\alpha=r_{\alpha 0}(n_S)+\bZ_\alpha(n_S)\cdot\widetilde \cN_\delta(\be(\bu)),
\eeqn
where 
\begin{equation*}
\widetilde{\mathcal N}_\delta (\be(\bu))(x) =\dashint_{B_\delta(x)\cap \Omega} \bbE(n_S) \be(\bu)(\tilde x) d\tilde x 
\end{equation*}
for all $x\in \overline\Omega$, with $\bbE(n_S) = \bbE_M(n_S) \chi_{\Omega_M} + \bbE_F \chi_{\Omega_F}$.
We assume  that  the  stress influences the chemical reactions and the dynamics of calcium-pectin cross-links \cite{PB}. 
 Since pectin are long molecules, we assume the nonlocal impact of cell wall mechanics on chemical processes. Thus, stresses within a neighborhood of a point affect the rate of the chemical reactions.    The length scale $\delta$ is associated with the length of the pectin molecules.

The flux of species $\alpha$ is assumed to be determined by Fick's law:
\beqn\label{Fick}
\bj_\alpha=-\bD_{\alpha}\nabla n_\alpha,
\eeqn
where $\bD_{\alpha}$ is the diffusion coefficient  of the species $\alpha$. 

Next,    we  specify  assumptions on the constitutive laws introduced in \eqref{conlaw1}--\eqref{Fick}  that reflect the physics of the plant cell wall.   The cell wall matrix has the same properties in all directions and, hence, is isotropic, see e.g.\ \cite{ZMSR}.  This is expressed mathematically by requiring 
\beqn\label{isoCL0}
\left.
\begin{aligned}
\bbE_M(n_S)(\bQ\be(\bu)\bQ^{\rm T})&=\bQ(\bbE_M(n_S)\be(\bu))\bQ^{\rm T},\\
\bZ_\alpha(n_S)&=\bQ\bZ_{\alpha}(n_S)\bQ^{\rm T},\\
\bD_{\alpha}&=\bQ\bD_{\alpha}\bQ^{\rm T},
\end{aligned}
\right\}\ \text{for all rotations}\ \bQ.
\eeqn
Using standard representation theorems for  isotropic  functions, see e.g.~\cite{GFA}, equations \eqref{isoCL0} imply that
\beqn\label{isoCL}
\begin{aligned}
\bbE_M(n_S)\be(\bu)&=2\mu(n_S)\be(\bu)+\lambda(n_S)(\text{div}\,\bu)\bone,\\
\bZ_\alpha(n_S)&=z_\alpha(n_S)\bone,\\
\bD_{\alpha}&=D_{\alpha}\bone.
\end{aligned}
\eeqn
While it is known that polymers can diffuse \cite{HSP}, the diffusion coefficient  of calcium-pectin cross-links is  much lower  than the diffusion coefficients  of the other species and, thus, we assume  first that calcium-pectin cross-links do not diffuse, i.e.\ $D_b=0$. 
Unlike the matrix, the microfibrils have different elastic properties in different directions, see e.g.~\cite{DMKM}.
For a plant cell wall, the amount of  calcium-pectin cross-links plays a decisive role in determining the  elastic properties of the wall matrix, \cite{Somerville,WHH}. Thus, we assume that $\bbE_M$ or, equivalently, $\mu$ and  $\lambda$,  depends only on $n_b$.

We  consider the following four interactions between the species in the matrix: 
\begin{enumerate}
\item The enzyme PME interacts with methylestrified pectin to form demethylestrified pectin. 
\item Demethylestrified pectin decays.
\item Demethylestrified pectin and calcium ions bind together to form calcium-pectin cross-links.
\item Under the presence of  stress, calcium-pectin cross-links break to yield demethylestrified pectin and calcium ions.
\end{enumerate}
For a detailed discussion of Interactions 1--4, see e.g.~\cite{PB,WG,ZMSR}.

We begin by discussing the reaction term $r_d$, which is decomposed into the sum of three terms:
\begin{equation*}
r_d=r_{eE}+r_{dd}+r_{fb}, 
\end{equation*}
where $r_{eE}$ is the rate of change of the density of demethylestrified pectin acid groups, $n_d$,  associated with Interaction 1, $r_{dd}$ is the rate of decay of $n_d$ mentioned in Interaction 2, and $r_{fb}$ is the rate of change of $n_d$ associated with the formation and breakage of calcium-pectin cross-links specified in Interactions 3 and 4.

From Interaction 1, we have
\begin{equation*}
r_{eE}=-r_e.
\end{equation*}
 We assume that the binding of PME  to and  dissociation  from  a pectin acid group are very fast, and  that the enzyme PME is not used up during the demethyl-esterification process so that $r_E=0$.  From Interactions 3 and 4 it follows that
\begin{equation*}
r_b=-r_c=-\frac 1 2 r_{fb}.
\end{equation*}
 The factor of a half  in front of $r_{fb}$ reflects the fact  that  two demethylestirified galacturonic acids are needed to form a calcium-pectin cross-link. 
We assume  that
\begin{align*}
r_{eE}&= R_{eE}(n_e,n_E),\\
r_{dd}&=-R_d \, n_d,
\end{align*}
where $R_{eE}$ defines the demethyl-esterification reaction between methylestrified galacturonic acid groups and PME and $R_d>0$ is a decay constant of the demethylestrified pectin.  Interactions between demethylestirified pectin and calcium ions increase the number of cross-links, while  stress can break the cross-links. Thus
\begin{equation}\label{Ninre}
\begin{aligned}
r_{b}&=R_{dc}(n_d,n_c)-R_b (n_b) \, {\cN}_\delta(\be(\bu)),
\end{aligned}
\end{equation}
where $R_{dc}$ models the formation of   cross-links through  the interactions between demethyl\-esterified pectin and calcium ions,    and  ${\cN}_\delta(\be(\bu))$ is  defined
as 
  \begin{equation}\label{def_N_12}
\mathcal N_\delta (\be(\bu))(t,x) =  \Big(\dashint_{B_\delta(x)\cap \Omega}\text{tr }  \bbE(n_b) \,  \be(\bu)(t, \tilde x) \, d\tilde x \Big)^{+} \qquad \text{for all } \; x\in \overline\Omega \text{ and } t \in (0,T).
\end{equation}
Having $r_{b}$ depend on the positive part of the local average of the stress   does not follow from \eqref{conlaw2}, but it is consistent with the isotropy assumption.  The reason for the choice \eqref{Ninre} is based on the idea that stretching, rather than compressing,  of the cross-links will cause them to break.

Possible choices for the functions $R_{eE}$, $R_{dc}$, and $R_b$ are
$$
\begin{aligned}
R_{eE}(n_e,n_E)=k_{eE}n_en_E, && \quad 
R_{dc}(n_d,n_c)= \frac{k_{dc,1}n_c}{k_{dc,2}+n_c} n_d, && \quad 
R_{b}(n_b)=k_b n_b,
\end{aligned}
$$
where $k_{eE}$, $k_{dc,1}$, $k_{dc,2}$, and $k_b$ are positive constants.  Due to the high calcium concentration in plant cell walls,  we assume saturation kinetics for the density of calcium ions in the reaction term $R_{dc}$. \\

\noindent\textbf{Remark.}
 The constitutive laws, considered here,  are consistent with the Second Law of Thermodynamics in that, in the elastic case, Maxwell's relation
\begin{equation*}
\frac{\partial\bT_R}{\partial n_\alpha}=\frac{\partial\mu_{\alpha}}{\partial \be(\bu)}
\end{equation*}
 holds, where $\mu_\alpha$ is the chemical potential for species $\alpha$, which depends on $n_S$, and $\be(\bu)$, see~\cite{GFA}.  The chemical potential is related to the flux $\bj_\alpha$ through the relation
\begin{equation*}
\bj_\alpha=-\bM_\alpha\nabla\mu_\alpha.
\end{equation*}
 To obtain the flux used in this section, set
\begin{align*}
\mu_\alpha&=
\begin{cases}
n_\alpha&  \qquad \alpha\not=b,\\
\dfrac 12\be(\bu)\cdot\dfrac{\partial\bbE}{\partial n_b}(n_b)\be(\bu)& \qquad\alpha=b,
\end{cases}\\
\bM_\alpha&=
\begin{cases}
D_\alpha\textbf{1}& \qquad \alpha\not=b,\\
0&\qquad \alpha=b.
\end{cases}
\end{align*}

The environment can effect the cell wall in two different ways: through external influences and boundary conditions. The effects of the supply of species $h_\alpha$ and external body forces $\bb$, including inertial terms, are neglected so
\begin{equation*}
h_\alpha=0\ \ \text{in}\ \Omega_M\qquad\text{and}\qquad \bb=\textbf{0}\ \ \text{in}\ \Omega.
\end{equation*}

The boundary $\partial\Omega$ of $\Omega$ is decomposed into four disjoint surfaces: $\Gamma_\cI$, $\Gamma_\cE$,  $\Gamma_{\cU}$, and $\partial\Omega\setminus (\Gamma_\cI\cup\Gamma_\cE\cup \Gamma_\cU)$, where  $\Gamma_\cI$ is the part of $\partial\Omega$ in contact with the interior of the cell and $\Gamma_\cE$ is the part of $\partial\Omega$ in contact with the middle lamella.  Let  $\bnu$ denote the exterior unit-normal to whatever surface is under discussion.  On $\Gamma= \partial \Omega_F \setminus \partial \Omega$, $\bnu$ points away from $\Omega_M$.

  PME, produced in the Golgi apparatus of a plant cell, is deposited into the cell wall  and diffuses through the cell wall into the middle lamella.  PME can also diffuse back into the cell to degrade. 
Thus, we assume that the enzyme PME can enter or leave the cell wall through $\Gamma_\cI$ but can only leave the wall through $\Gamma_\cE$.  To account for the mechanisms  controlling the amount of PME in a cell wall  \cite{WHH},  we  assume that the  inflow  of PME  into  the cell wall  depends on the total amount of methylestrified pectin within the  wall, which leads to the boundary fluxes
\begin{equation*}
\begin{aligned}
\bj_E\cdot\bnu&=-J_E\Big(\int_{\Omega_M} n_e\, dx\Big) +\zeta_En_E&&\qquad \text{on}\ \Gamma_\cI,\\
\bj_E\cdot\bnu&=\gamma_En_E &&\qquad \text{on}\ \Gamma_\cE,
\end{aligned}
\end{equation*}
where $\zeta_E$ and $\gamma_E$ are  non-negative constants. 

Methylesterified pectin is produced by the cell and then transported into the cell wall through $\Gamma_\cI$, e.g.~\cite{WG}.  To account for mechanisms  controlling the amount of pectin in the cell wall, we assume that the  inflow of new methylestrified pectin  decreases with an increasing   amount of methylestrified pectin in the  wall.   Methylestrified pectin can leave the wall   through $\Gamma_\cE$ and enter the middle lamella. Thus,
\beqn\label{IBC2}
\begin{aligned}
\bj_{e}\cdot\bnu&=-J_e\Big(\int_{\Omega_M}n_e\, dx\Big) &&\qquad \text{on}\ \Gamma_\cI,\\
\bj_e\cdot\bnu&=\gamma_en_e &&\qquad \text{on}\ \Gamma_\cE,
\end{aligned}
\eeqn
where $\gamma_e$ is a non-negative constant. We assume an outflow of demethylesterified pectin from the cell wall into the middle lamella:
\beqn\label{IBC3}
\begin{aligned}
\bj_{d}\cdot\bnu=0  \qquad \text{on}\ \Gamma_\cI,\qquad \qquad
\bj_d\cdot\bnu=\gamma_dn_d   \qquad \text{on}\ \Gamma_\cE.
\end{aligned}
\eeqn

Calcium ions may enter or leave the cell wall through both $\Gamma_\cI$ and $\Gamma_\cE$, but the flow of calcium through $\Gamma_\cI$ is controlled by stretch activated calcium channels  in the plasma membrane, see e.g.\ \cite{DR, White}.   Thus, the flow of calcium through $\Gamma_\cI$ is assumed to depend on the local average of the  stress and on the density of calcium, so that
\beqn\label{IBC4}
\begin{aligned}
\bj_c\cdot\bnu&= -J_{c, \cE} (n_c)  &&   \text{on } \Gamma_\cE, \\
\bj_c\cdot\bnu&=-{\cN}_\delta(\be(\bu))\,  J_{c,\cI}(n_c)&&  \text{on}\ \Gamma_\cI,
\end{aligned}
\eeqn
where
$$J_{c, \cI}(n_c)=\gamma_{c,1} - \gamma_{c,2} n_c, \qquad \qquad J_{c, \cE}(n_c)=\zeta_{c,1} - \zeta_{c,2} n_c,$$
with non-negative constants $\gamma_{c,i}$ and $\zeta_{c,i}$, where $i=1,2$,  and $ \cN_\delta(\be(\bu))$  is given  by \eqref{def_N_12}.  
Similar to    \eqref{Ninre}, we assume that the right-hand side of \eqref{IBC4}$_2$ depends on the positive part of the local average of the stress. 

The traction boundary conditions
\begin{equation*}
\begin{aligned}
\bT_R\bnu&=-p_\cI\bnu &&\qquad\text{on}\ \Gamma_\cI,\qquad \qquad   && 
\bT_R\bnu=\bff &&\qquad\text{on}\ \Gamma_\cE\cup \Gamma_\cU,
\end{aligned}
\end{equation*}
come from the constant, positive turgor pressure $p_\cI$  within the cell and the traction force $\bff$, caused  by surrounding cells.
We consider zero-flux boundary conditions on the surface of  the microfibrils and on $\Gamma_\cU$:
\begin{equation*}
\bj_\alpha\cdot\bnu =0 \qquad \text{on}\ \Gamma \qquad \quad \text{ and }\quad  \qquad  \bj_\alpha\cdot\bnu =0 \qquad \text{on}\ \Gamma_\cU, \qquad \alpha = e,E,d,c.
\end{equation*}
On $\partial\Omega\setminus (\Gamma_\cI\cup\Gamma_\cE\cup \Gamma_\cU)$ periodic boundary conditions for the densities and displacement are imposed.

Possible choices for the functions that determine the boundary conditions are
\begin{equation*}
\begin{aligned}
& J_E\Big(\int_{\Omega_M} n_e\, dx\Big)= \beta_E \int_{\Omega_M} n_e\, dx ,\qquad J_e\Big(\int_{\Omega_M} n_e\, dx\Big)=\frac{\beta_e }{1+\zeta_e \int_{\Omega_M} n_e\ dx },\\
& \bff=p_{\cE}\, \bnu  \quad \text{ on } \Gamma_\cE,  \qquad \bff=p_{\cU}\, \bnu  \quad \text{ on } \Gamma_\cU, 
\end{aligned}
\end{equation*}
where $\beta_E$,  $\beta_e$, $\zeta_e$, $p_\cE$, and $p_\cU$ are positive constants. 

The difference between Model I and  Model II is that in Model II we assume that the calcium-pectin cross-links can diffuse, i.e. $D_b >0$.  In this situation we  assume  that the reaction term associated with the formation and destruction  of cross-links depends on the point-wise values of the displacement gradient rather than the local average:
\begin{equation}\label{Ninre_II}
\begin{aligned}
r_{b}&=R_{dc}(n_d,n_c)-R_b (n_b) \, Q(n_b, \be(\bu)),
\end{aligned}
\end{equation}
where  a possible choice for $Q$ is $ Q(n_b, \be(\bu))=\big({\rm tr }\,  \bbE(n_b) \,  \be(\bu)\big)^+$. Considering  the diffusion of calcium-pectin cross-links corresponds  to the situation where  the calcium-pectin network is less connected and the mechanical stress in the cell wall  have a point-wise impact on  chemical processes.

 \section{Summary}\label{conclusions}
In this paper we developed a  mathematical model for plant cell wall biomechanics which  explicitly considers the microscopic structure of  the cell wall  and the biochemical processes that take place within the wall matrix.  The microscopic model defined on the scale of the cell wall's structural elements  describes the interconnections between   the calcium-pectin cross-links dynamics and   the changes in the  mechanical properties of the cell wall. 
We consider both  a non-local effect of  strain or  stress  on the calcium-pectin cross-link dynamics as well as a point-wise dependence of chemical reactions on  mechanical forces.
Applying homogenization techniques we rigorously derive macroscopic models for plant cell wall biomechanics. 
We also show that since the cell wall matrix is isotropic, the macroscopic elasticity tensor is a linear function of the Young's modulus of the wall matrix.  Then  assuming that only the Young's modulus 
of the wall matrix depends on the density of  calcium-pectin cross-links  we compute numerically the effective macroscopic elastic properties of the plant  cell wall  as  a function of the density of calcium-pectin cross-links.     In the numerical simulations, the cell wall  microfibrils are assumed to be transversal isotropic.
 The numerical analysis of the  full macroscopic model will be the  subject of   future research. \\
 
 \textit{Acknowledgements.} The authors would like to thank Dr.\ Sebastian Wolf for fruitful discussions on the molecular biology of plant cell walls.

\end{document}